\newtheorem{thm}{Theorem}[section]
\newtheorem{lemma}{Lemma}[section]
\newtheorem{prop}{Proposition}[section]
\newtheorem{cor}{Corollary}[section]
\newtheorem{remark}{Remark}[section]
\newtheorem{cond}{Condition}[section]
\newtheorem{ex}{Example}[section]
\newcommand{\argmax}{\arg\!\max}
\newcommand{\ZZ}{{\mathbb  Z}}
\begin{document}

\begin{frontmatter}
\title{Large Sample Theory for Merged Data from Multiple Sources}
\runtitle{Asymptotic Theory for Merged Data from Multiple Sources}

\author{\fnms{Takumi } \snm{Saegusa}\ead[label=e1]{tsaegusa@math.umd.edu}}
\address{University of Maryland\\ College Park, MD  20742 \\ \printead{e1}\\}
\affiliation{University of Maryland}

\runauthor{Saegusa}

\begin{abstract}
We develop large sample theory for merged data from multiple sources. 
Main statistical issues treated in this paper are (1) the same unit
potentially appears in multiple datasets from overlapping data
sources, (2) duplicated items are not identified, and (3) a sample
from the same data source is dependent due to sampling without replacement. 
We propose and study a new weighted empirical process 
and extend empirical process theory to a dependent and biased sample with duplication.
Specifically, we establish the uniform law of large numbers and uniform central
limit theorem over a class of functions along with several
empirical process results under conditions identical to  those in the i.i.d.\@ setting.
As applications, we study infinite-dimensional $M$-estimation and
develop its consistency, rates of convergence, and asymptotic normality.
Our theoretical results are illustrated with simulation studies and a
real data example.
\end{abstract}

\begin{keyword}[class=AMS]
\kwd[Primary ]{62E20}
\kwd[; secondary ]{62G20}
\kwd{62D99}
\kwd{62N01}
\end{keyword}

\begin{keyword}
\kwd{calibration}
\kwd{data integration}
\kwd{empirical process}
\kwd{non-regular}
\kwd{sampling without replacement}
\kwd{semiparametric model}
\end{keyword}

\end{frontmatter}

\tableofcontents
\section{Introduction}
\label{sec:1}
Many organizations nowadays collect massive datasets from various sources including online  surveys, business transactions, social media, and scientific research.
In contrast to well-controlled small data, the representativeness of these datasets often critically depends on technology for data collection.
A promising remedy to reduce potential selection bias is to merge multiple samples with different coverages.
Data integration problems, however, have not been fully studied in view of basic limit theorems such as the law of large numbers (LLN) and the central limit theorem (CLT).
Main statistical challenges we focus on here
are (1) potential duplicated selection  from overlapping sources of
different sizes, (2) the lack of identification of duplicated items
across datasets, and (3) dependence among observations in each source
induced by sampling without replacement.
Because large parts of statistical theory rely on the assumption that observations are independent and identically distributed (i.i.d.), the analysis of merged data from multiple sources requires a novel approach in theory and methods.

The basic setting considered in this paper is as follows:

\noindent $\bullet$ Our interest lies in a statistical model
$\mathcal{P}$ for a vector of variables $X$ taking values in a
measurable space $(\mathcal{X},\mathcal{A})$. 
Suppose $X\sim P_0\in\mathcal{P}$.

\noindent $\bullet$ 
Let  $V = (\check{X},U) \in\mathcal{V}$ 
where $\check{X}$ is a coarsening of $X$ and $U$ is a vector of
auxiliary variables that 
do not contain information about 
the model $\mathcal{P}$. 
The space $\mathcal{V}$ consists of $J$ overlapping ``(population)
data sources'' $\mathcal{V}^{(1)},\ldots,\mathcal{V}^{(J)}$  with
$\cup_{j=1}^J\mathcal{V}^{(j)}  = \mathcal{V}$ and
$\mathcal{V}^{(j)}\cap \mathcal{V}^{(j')} \neq \emptyset$ for some
$(j,j')$. Variables $V$ determine source membership.

\noindent $\bullet$ For data collection, 
a large sample is drawn from a population:
 let $V_1,\ldots, V_N$ be i.i.d.\@ as $V$.
Unit $i$ belongs to source $j$ if $V_i\in\mathcal{V}^{(j)}$.
Sample size in source $j$ is $N^{(j)} = \#\{i\leq N:V_i\in\mathcal{V}^{(j)}\}$. 

\noindent $\bullet$ Next, a random sample of size $n^{(j)}$ is drawn  without replacement from source $j$
with sampling probability 
$\pi^{(j)}(V_i)= (n^{(j)}/N^{(j)})I\{V_i\in\mathcal{V}^{(j)}\}$.
For selected units, we observe $X$. 
We repeat the same process for all sources.
As $n^{(j)}\leq N^{(j)}$ holds, $n^{(j)}$ and $\pi^{(j)}(\cdot)$ are a
random variable and a random function, respectively. 

\noindent $\bullet$ Finally, multiple datasets from different sources are combined. Our proposed estimation method estimates the parameters of the model $\mathcal{P}$.

\begin{figure}[htbp]
\includegraphics[width=11cm]{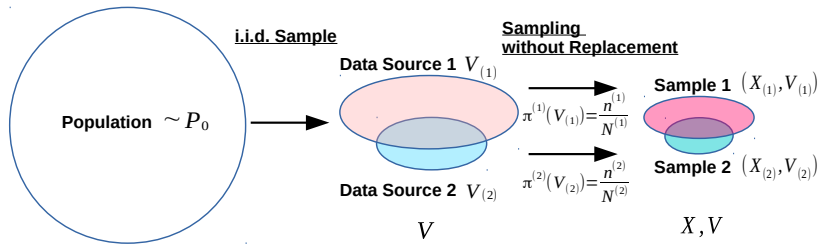}
\caption{Sampling scheme for merged data from multiple sources with $J=2$.}
\label{fig:2}
\end{figure}
The two-stage formulation  is crucial in
describing duplicated selection.  
A large sample is drawn from a population ({\it sampling from population}), and units are classified  into one or more (sample) data sources.
Next, subsamples are drawn without replacement from each data source ({\it finite-population sampling}) to generate multiple datasets.
The sample at the first stage serves as a finite population to allow for repeated selection of the same units.


Information that statisticians have at their disposal is the
$X$- and $V$-values of the selected items from different sources,
membership information on (other) data sources to which selected items belong,
and the realizations of $N^{(j)}$ and $n^{(j)}$.
A special case where $V$-values are also available for non-sampled
items is treated in Section \ref{sec:4}.

Our framework covers a number of applications.
Typical examples are opinion polls \cite{Brick2006}, public
health surveillance \cite{Hu15032011}, and health interview surveys \cite{Cervantes2006} where data sources are lists of cell- and landline-phone users.
Duplicated records in databases are important issues in business operations \cite{Herzog:2007:DQR:1534235}.  
Scientific research has considered combining 
 face-to-face, telephone and online 
surveys \cite{DeLeeuw2005,Dillman:2014}.
Our setting also covers the situation where one data source is entirely contained in another.
This case is highly useful for studying rare disease and rare exposure represented as smaller data sources \cite{Iachan1993, Kalton1986}. 
Applications include the synthesis of existing clinical and
epidemiological studies with 
surveys, disease registries, and healthcare databases \cite{MR3494641,RSSA:RSSA12136, Metcalf2009}. 

Despite scientific and financial benefits of data integration, many
important models have never been studied in our setting due to the lack of 
probabilistic tools to study a dependent and biased sample with
duplication.
We address this issue by extending empirical process
theory with applications to infinite-dimensional $M$-estimation in mind. 
This theory provides essential tools for the analysis of
semi- and non-parametric inference  (see e.g. \cite{MR2724368,MR1385671}). 
It originated in the study of the uniform law of
large numbers (U-LLN) and the uniform central limit theorem (U-CLT) in the i.i.d.\@ setting
\cite{Cantelli1933,MR0047288,MR665285,MR757767,Glivenko1933}.
The i.i.d.\@ assumption has been relaxed in several directions
including triangular arrays \cite{MR1473875,MR1834583}, 
martingale difference \cite{MR996990}, Markov chains \cite{MR1340827},
and stationary processes \cite{MR0464344}. 
The study of dependent empirical processes arising from
complex sampling was initiated by \cite{MR2325244} 
for stratified samples followed by
\cite{MR3059418}.
Beyond stratified samples, \cite{MR3619696} and \cite{MR3670194}  studied  the U-CLT for
rejective sampling and  single stage sampling, respectively. 

Our sampling scheme is markedly different from those in
the above literature in important ways. 
A basic technique to analyze dependent empirical processes
is to find a hidden (nearly) independent structure as seen in
\cite{MR3619696} that utilized similarity between independent Poisson
sampling and rejective sampling.
This method 
needs a simple dependence structure
but our merged data have 
complex multitiered dependence:
First, items within the same source are dependent due to sampling
without replacement. Second, items across overlapping sources
are dependent because they are potentially identical.
Previous studies 
focused on dependence within a sample but 
our theory addresses dependence within and between samples at the same time.
Another difference is that simple inverse probability
weighting adopted in \cite{MR3619696,MR3670194,MR2325244} is not valid
 in our setting.
This technique corrects selection bias from data sources but does not
account for bias from duplicated selection.

We build large sample theory on a novel weighted empirical process that integrates information from multiple sources.
Our main contribution is the U-LLN and U-CLT over a class of functions.
We only assume that an index set is Glivenko-Cantelli or Donsker as in
\cite{MR2325244,MR3059418}.  
This implies that if the U-LLN or the U-CLT holds for the i.i.d.\@ sample, the corresponding results hold for merged data without additional conditions. 
This formulation is of practical importance because fair comparison can be made between previous scientific conclusions from i.i.d.\@ samples and the ones from the analysis of merged data without worrying about differences in assumptions.
This generality makes a contrast with \cite{MR3619696} that assumes
the uniform entropy condition and
\cite{MR3670194} that assumes a priori the existence of the finite-dimensional CLT.

Another contribution is theory of infinite-dimensional $M$-estimation
for merged data. 
Previous research tended to focus
on the U-CLT with limited applications as a result (e.g. statistical
functionals in \cite{MR3619696, MR3670194}), but
the U-LLN and maximal
inequalities are essential to obtain consistency and rates
of convergence for $M$-estimators.
We obtain a set of empirical process tools beyond the U-CLT, and derive consistency, rates of convergence, and asymptotic normality of our estimators.
We obtain optimal calibration \cite{MR1173804,Rao1994} and optimal
weights in our weighted empirical process that improve efficiency of
our estimators.
We study several examples including the Cox proportional hazards
models \cite{MR0341758} 
and illustrate the finite sample performance of our methods through
numerical studies in several different scenarios.

Our theory can be viewed as a non-trivial extension of
\cite{MR2325244,MR3059418} for stratified
samples to overlapping ``strata.''
In stratified sampling,
the i.i.d.\@ sample from population is stratified and finite
population sampling is carried out in each stratum. 
One may consider our sampling scheme as ``stratified sampling''
with non-negligible intersections among strata.
The approach of \cite{MR2325244,MR3059418} is, however, not applicable
to our setting due to issues of multitiered dependence and inverse
probability weighting discussed above.
In particular, their proof  exploited the disjoint
nature of strata and reduced weak convergence to multiple convergence
within strata. 
This method addresses dependence within strata but does not cover dependence across 
``strata'' arising from duplicated selection
(see Section \ref{sec:3} for details).
Note that our framework is more general than previously
studied sampling designs including stratified sampling in that it
accommodates those designs in place of finite population sampling. 
In the Appendix \ref{sec:st}, we treat stratified sampling  at
the second stage of sampling in the data integration context.

The rest of the paper is organized as follows.
In Section \ref{sec:2}, we introduce our weighted empirical process
and discuss more on our sampling framework.
We present the U-LLN and several variants of U-CLTs in Section \ref{sec:3}.
Calibration methods are treated in Section \ref{sec:4}. 
We study infinite-dimensional $M$- and $Z$-estimation and their applications in Section \ref{sec:5}. 
Finite sample properties of proposed methods are illustrated in numerical studies in Section \ref{sec:6}. 
Section \ref{sec:7} discusses differences between our framework and
those in sampling theory.
All proofs and additional simulation are given in the Appendix.

\section{Sampling and Empirical Process}
\label{sec:2}
We review basic settings and introduce our weighted empirical process. 


\subsection{Sampling}
Let $R_i^{(j)}\in \{0,1\}$ be a sampling indicator
from source $j$. 
Simple random sampling from each source is carried out independently. 
Thus, sampling indicators $(R_1^{(j)},\ldots,R_N^{(j)})$ and $(R_1^{(j')},\ldots,R_N^{(j')})$ with $j\neq j'$ are conditionally independent given $V_1,\ldots,V_N$. 
However, sampling indicators within the same source 
are not independent but are only exchangeable due to sampling without replacement.
The unit that does not belong to source $j$ (i.e., $V_i\notin
\mathcal{V}^{(j)}$) automatically has $R_i^{(j)} = 0$. Throughout we denote inverse
probability weighting by $R_i^{(j)}/\pi^{(j)}(V_i)$ with convention $0/0=0$.

To enumerate units within a data source, we write e.g. $X_{(j),i}$ to
mean the observation of $X$ for the unit $i$ in source $j$ with index
$i$ going from 1 through $N^{(j)}$ (see e.g. \ref{eqn:hempj}).
The limits of sampling probabilities are
$\lim_{N\rightarrow\infty}\pi^{(j)}(v) =
p^{(j)}I\{v\in\mathcal{V}^{(j)}\}$ where $p^{(j)}\geq  c>0$ for some constant $c$.
We assume $N$ is known. In the Appendix \ref{sec:N}, we
consider the case of unknown $N$ which may be the case in practice.
For additional notations, let $W = (X,U) \in \mathcal{X}\times \mathcal{U}\equiv \mathcal{W}$ with $W\sim \tilde{P}_0$.
The conditional measure given membership in source $j$ is denoted as $P_0^{(j)}$, i.e., for measurable $A\subset \mathcal{W}$, $P_0^{(j)}(A) = \tilde{P}_0(A\cap\mathcal{V}^{(j)})/\nu^{(j)}$ where $\nu^{(j)}\equiv\tilde{P}_0(V\in \mathcal{V}^{(j)})$ is 
 membership probability in source $j$.
The conditional probability measure for $R_i^{(j)}$ given
$N^{(j)},i=1,\ldots,N, j=1,\ldots,J$, is denoted as $P_{R,N}$. 
The probability measure $P^{\infty}$ is defined such that its
projection of the first $N$ coordinates is $\tilde{P}_0^{N}\times
P_{R,N}$.

\subsection{Assumption of Unidentified Duplication}

Duplicated items are not identified in our setting, which reflects the lack of communication between sampling procedures.
Instead, we assume that we can identify additional data source
membership of selected items by checking their $V$. 
This assumption is not too restrictive.  
For example, telephone surveys can ask an additional question whether
to own both landline and cell phones. 
When medical studies are merged, comparison of
inclusion and exclusion criteria suffices.
Identifying duplication, on the other hand, produces
unavoidable errors.
Important identifiers
such as names, addresses, and
social security numbers are usually not disclosed for a privacy reason, and
even these variables suffer typographical errors
and inconsistent abbreviations \cite{doi:10.1080/01621459.1969.10501049,Winkler1995}. 
Correcting bias from imperfect record linkage requires a correctly
specified model of linking errors \cite{MR2915160,MR2156832}.
Our proposed method avoids these practical
difficulties, and remains valid even when identification is possible.

\subsection{Hartley-Type Empirical Process}
The empirical measure is a fundamental object in empirical process theory. 
This cannot be computed in our setting because of non-selected items
and unidentified duplicated selection.
As an alternative, we propose to study Hartley's estimator
\cite{Hartley1962, MR0228137} of a distribution function in place of
the empirical measure.

Hartley's estimator \cite{Hartley1962, MR0228137}  was originally proposed for estimation of population total and average in multiple-frame surveys in sampling theory where 
multiple samples are drawn from overlapping sampling frames.
Viewing sampling frames as data sources in our context, Hartley's
estimator of the sample average $\mathbb{P}_NX$ of $X$ when $J=2$ is defined as
\begin{eqnarray*}
\mathbb{P}_N^{\textup{H}} X 
&\equiv& \frac{1}{N}\sum_{i=1}^N \left(\frac{R_i^{(1)}\rho^{(1)}(V_i)}{\pi^{(1)}(V_i)}+\frac{R_i^{(2)}\rho^{(2)}(V_i)}{\pi^{(2)}(V_i)} \right)X_i, 
\end{eqnarray*}
where the weight function $\rho$ for duplicated selection is given by 
\begin{eqnarray*}
\rho(v) = (\rho^{(1)}(v), \rho^{(2)}(v)) \equiv \left\{
\begin{array}{ll}
(1,0) & \quad \mbox{if} \quad v\in \mathcal{V}^{(1)} \mbox{ and } v\notin \mathcal{V}^{(2)}, \\
(0,1) & \quad \mbox{if} \quad v\notin \mathcal{V}^{(1)} \mbox{ and } v\in \mathcal{V}^{(2)}, \\
(c^{(1)},c^{(2)}) &\quad \mbox{if} \quad v\in \mathcal{V}^{(1)}\cap \mathcal{V}^{(2)}, \\
\end{array}
\right.
\end{eqnarray*}
for positive constants $c^{(1)},c^{(2)}$ with $c^{(1)}+c^{(2)}=1$.
Duplicated selection and missing observations are properly addressed by the weight function $\rho(v)$ and the inverse probability weights respectively. 
In fact, this estimator is unbiased for $E(X)$ 
because $\rho^{(1)}(v) +\rho^{(2)}(v) = 1$ for all $v$ and $E[R^{(j)}_i|X_i,V_i,N^{(j)},n^{(j)}] = \pi^{(j)}(V_i)$.
Moreover, identification of duplicated items is not necessary to
compute this estimator because  the two sums 
\begin{equation}
\label{eqn:svydecomp}
\mathbb{P}_N^{\textup{H}} X = 
\frac{1}{N}\sum_{i=1}^N \frac{R_i^{(1)}\rho^{(1)}(V_i)}{\pi^{(1)}(V_i)}X_i+\frac{1}{N}\sum_{i=1}^N\frac{R_i^{(2)}\rho^{(2)}(V_i)}{\pi^{(2)}(V_i)} X_i,
\end{equation} 
 in $\mathbb{P}_N^{\textup{H}}X$ can be computed separately based on each subsample.

Motivated by Hartley's estimator, we define the {\it Hartley-type empirical measure} (H-empirical measure) for $J=2$ by 
\begin{eqnarray*}
\mathbb{P}_N^{\textup{H}}  &\equiv& \frac{1}{N}\sum_{i=1}^N \left(\frac{R_i^{(1)}\rho^{(1)}(V_i)}{\pi^{(1)}(V_i)}+\frac{R_i^{(2)}\rho^{(2)}(V_i)}{\pi^{(2)}(V_i)} \right)\delta_{(X_i,V_i)}. 
\end{eqnarray*}
This is an unbiased estimator of the empirical measure $\mathbb{P}_N \equiv N^{-1}\sum_{i=1}^N\delta_{(X_i,V_i)}$ given $(X_i,V_i),i=1,\ldots,N$.
Note, however, that $\mathbb{P}_N^{\textup{H}}$ is not a probability measure since point masses do not add up to 1 in general. 
The {\it Hartley-type empirical process} (H-empirical process) is defined by 
\begin{equation*}
\mathbb{G}_N^{\textup{H}} = \sqrt{N}(\mathbb{P}_N^{\textup{H}}  - \tilde{P}_0).
\end{equation*}

When there are more than two sources, we define the weight function $\rho=(\rho^{(1)},\ldots,\rho^{(J)}):\mathcal{V}\mapsto [0,1]^J$ that is constant on a mutually exclusive subset of $\mathcal{V}$ determined by $\mathcal{V}^{(j)}$'s:
\begin{eqnarray*}
\rho^{(j)}(v) = \left\{
\begin{array}{ll}
1, & v\in \mathcal{V}^{(j)}\cap \left(\cup_{m\neq j} \mathcal{V}^{(m)}\right)^c,\\
c^{(j)}_{k_1,\ldots,k_l}, & v \in \mathcal{V}^{(j)}\cap\left(\cap_{m=1}^l \mathcal{V}^{(k_m)} \right) \cap \left(\cup_{m\notin\{j,k_1,\ldots,k_l\}} \mathcal{V}^{(m)}\right)^c,\\
0, & v\notin \mathcal{V}^{(j)},\\
\end{array}\right.
\end{eqnarray*}
with $j,k_1,\ldots,k_l$ all different and $\sum_{j=1}^J \rho^{(j)}(v) =1$. 
The H-empirical measure is defined by
\begin{equation*}
\mathbb{P}_N^{\textup{H}} 
\equiv \frac{1}{N} \sum_{i=1}^N\sum_{j=1}^J \frac{R_i^{(j)}\rho^{(j)}(V_i)}{\pi^{(j)}(V_i)} \delta_{(X_i,V_i)} .
\end{equation*}
and the H-empirical process is defined by $\mathbb{G}_N^{\textup{H}} = \sqrt{N}(\mathbb{P}_N^{\textup{H}}  - \tilde{P}_0)$.

Let $\mathcal{F}$ be a class of measurable functions on $(\mathcal{X},\mathcal{A})$ that serves as the index set for the H-empirical process. 
As a stochastic process indexed by $\mathcal{F}$, $\mathbb{G}_N^{\textup{H}}$ evaluated at $f\in\mathcal{F}$ is a random variable
$\mathbb{G}_N^{\textup{H}} f =\sqrt{N}(\mathbb{P}_N^{\textup{H}} -\tilde{P}_0)f=\sqrt{N}(\mathbb{P}_N^{\textup{H}} f-\tilde{P}_0f)$ where $\tilde{P}_0f$ is the expectation of $f(X)$ under $\tilde{P}_0$, and $\mathbb{P}_N^{\textup{H}} f$ is the ``expectation'' of $f(X)$ under $\mathbb{P}_N^{\textup{H}}$ given by
\begin{equation*}
\mathbb{P}_N^{\textup{H}} f
\equiv \frac{1}{N} \sum_{i=1}^N\sum_{j=1}^J \frac{R_i^{(j)}\rho^{(j)}(V_i)}{\pi^{(j)}(V_i)} f(X_i) .
\end{equation*}
We often omit variables of a function in ``expectations'' as in
$\mathbb{P}_N^{\textup{H}} f$ and $\mathbb{G}_N^{\textup{H}} f$.

\section{Limit Theorems: Uniform WLLN and CLT}
\label{sec:3}

The U-LLN and U-CLT for the H-empirical process lay the groundwork for the analysis of  merged data from multiple sources. 
The critical issue for establishing these theorems is multitiered
dependence.
This is not a difficult problem in the finite-population framework 
where only sampling indicators $R^{(j)}_i$ are random. 
For example, the two terms of $\mathbb{P}_N^{\textup{H}}X$ in
(\ref{eqn:svydecomp}) are independent in this framework, and each
admits a finite-population CLT (e.g. \cite{hajek1960}) to yield the sum of independent normal random variables as a limit \cite{lu2010}.
A similar idea appears in the analysis of stratified samples.
For the derivation of the U-CLT, \cite{MR2325244} decomposed their weighted empirical process into stratum-wise empirical processes and showed their conditional weak convergence  to independent Gaussian processes given data. 
Because strata do not overlap unlike our case, conditional independence automatically becomes unconditional to complete their proof.
Unfortunately, this conditional argument is not valid in our setting due to dependence across overlapping data sources.

Our approach consists of two key ideas: (1) the decomposition of the H-empirical process into data sources with centering by appropriate variables, and (2) bootstrap asymptotics for establishing unconditional asymptotic normality. 
Our decomposition ensures unconditional independence, and bootstrap asymptotics bridges unconditional and conditional convergence.

Our decomposition emulates two stages of the sampling procedure:
\begin{equation*}
\mathbb{G}_N^{\textup{H}} = \sqrt{N}(\mathbb{P}_N^{\textup{H}}-\tilde{P}_0) 
= \sqrt{N}(\mathbb{P}_N-\tilde{P}_0) + \sqrt{N}(\mathbb{P}_N^{\textup{H}}-\mathbb{P}_N).
\end{equation*}
The first term is the empirical process
$\mathbb{G}_N=\sqrt{N}(\mathbb{P}_N-\tilde{P}_0) $ for the i.i.d.\@
sample which corresponds to sampling from population at the first stage. 
This process weakly converges to the 
Brownian bridge 
by the U-CLT for the i.i.d.\@ sample.
The second term corresponding to  
sampling from data sources is further decomposed.
Note that 
$\mathbb{P}_Nf = \sum_{j=1}^J\mathbb{P}_N\rho^{(j)}(V)f(X)$
by the fact that $\sum_{j=1}^J\rho^{(j)}(v)=1$ for every $v$.
Combining this with the decomposition of $\mathbb{P}_N^{\textup{H}}$ in (\ref{eqn:svydecomp}) with a general $J$ yields
\begin{eqnarray*}
(\mathbb{P}_N^{\textup{H}}-\mathbb{P}_N)f&=&\sum_{j=1}^J \frac{1}{N}\sum_{i=1}^N\left(\frac{R^{(j)}_i}{\pi^{(j)}(V_i)}-1\right)\rho^{(j)}(V_i)f(X_i)\\
&\equiv&\sum_{j=1}^J(\mathbb{P}_N^{\textup{H},(j)}-\mathbb{P}_N) \rho^{(j)}f .
\end{eqnarray*}
As in the finite-population framework, the conditional covariance of  $(\mathbb{P}_N^{\textup{H},(j)}-\mathbb{P}_N) \rho^{(j)}f$ with different $j$'s is zero  given data $(X_i,V_i),i=1,\ldots,N,$ because sampling from different data sources 
 (i.e., $R^{(j)}$s and $R^{(j')}$s) is independent. 
Moreover, their conditional expectations 
given data 
 are also zero because $E[R^{(j)}_i|X_i,V_i,N^{(j)},n^{(j)}] = \pi^{(j)}(V_i)$.
It follows from the total law of covariance (i.e., $\textup{Cov}(X,Y) = E[\textup{Cov}(X,Y|Z)] + \textup{Cov}(E[X|Z], E[Y|Z])$) 
that any two of summands in the last display are uncorrelated. 
The same argument applies to the relationship between each summand
and $\sqrt{N}(\mathbb{P}_N-\tilde{P}_0)f$.
Hence we obtain the decomposition of $\mathbb{G}_N^{\textup{H}}$ into $J+1$ uncorrelated pieces:
\begin{equation*}
\label{eqn:decomp}
\mathbb{G}_N^{\textup{H}} f= \sqrt{N}(\mathbb{P}_N-\tilde{P}_0) f+ \sum_{j=1}^J\sqrt{N}(\mathbb{P}_N^{\textup{H},(j)}-\mathbb{P}_N) \rho^{(j)}f.
\end{equation*}
If we show each summand converges to a Gaussian process, the limiting process of $\mathbb{G}_N^{\textup{H}}$ is the sum of $J+1$ independent Gaussian processes.

To establish weak convergence of the second term in the last display,
we adopt the bootstrap asymptotic theory. 
The key observation is to view  sampling from a data source $j$ as a single realization of the $m$-out-of-$n$ bootstrap with $m=n^{(j)}$ and $n=N^{(j)}$
where a bootstrap sample of size $m$ is drawn from a sample of size $n$ without replacement. 
To see this, rewrite $(\mathbb{P}_N^{\textup{H},(j)}-\mathbb{P}_N) \rho^{(j)}f$ by $(N^{(j)}/N) (\hat{\mathbb{P}}^{(j)}_{n^{(j)}}-\mathbb{P}^{(j)}_{N^{(j)}})\rho^{(j)}f$ where
\begin{equation}
\hat{\mathbb{P}}^{(j)}_{n^{(j)}} \equiv \frac{1}{n^{(j)}}\sum_{i=1}^{N^{(j)}}R^{(j)}_{(j),i}\delta_{(X_{(j),i},V_{(j),i})}, 
\qquad 
\mathbb{P}^{(j)}_{N^{(j)}} \equiv \frac{1}{N^{(j)}}\sum_{i=1}^{N^{(j)}}\delta_{(X_{(j),i},V_{(j),i})}.
\label{eqn:hempj}
\end{equation}
Here we enumerate the items within data source $j$.
Focusing on source $j$, $\mathbb{P}^{(j)}_{N^{(j)}}\rho^{(j)}f$ is the sample mean of $\rho^{(j)}(V)f(X)$ before sampling at the second stage while
$\hat{\mathbb{P}}^{(j)}_{n^{(j)}}\rho^{(j)}f$
is the sample mean  after sampling.
In view of the $m$-out-of-$n$ bootstrap,
the former is an average in the original sample while the latter is
a bootstrap average, and hence their difference is expected to yield
asymptotic normality with appropriate scaling.
Although $m/n=n^{(j)}/N^{(j)} \rightarrow p^{(j)}\neq 0$ unlike the
usual $m$-out-of-$n$ bootstrap method, 
asymptotics in our case can be treated as the special case of the
exchangeably weighted bootstrap studied by \cite{MR1245301}.
Theory of \cite{MR1245301} emphasized conditional weak convergence, but it is not difficult to extend their proof to unconditional one.
Accordingly we obtain the sum of independent Gaussian processes as the limit of $\mathbb{G}_N^{\textup{H}}$.
In the Appendix \ref{sec:uclt}, we make this heuristic argument rigorous.

Below we write $P^*$ and $E^*$ to mean outer probability of $P^{\infty}$ and expectation with respect to $P^*$. Since empirical process theory concerns the supremum of random elements, we use these notations to take care of measurability issues. For more details, see Section 1.2 of \cite{MR1385671}. A reader not interested in technical details can replace these by $\tilde{P}_0$ and $E$ without harm. 

\subsection{Uniform Law of Large Numbers}
The U-LLN holds for the empirical measure $\mathbb{P}_N$ in the i.i.d.\@ setting if the index set $\mathcal{F}$ is a Glivenko-Cantelli class (see e.g. p.81 of \cite{MR1385671}). 
This Glivenko-Cantelli property is sufficient for the U-LLN for  merged data from multiple sources. The following result is obtained by applying the bootstrap U-LLN \cite{MR1385671} to our decomposition of $\mathbb{G}_N^{\textup{H}}$.

\begin{thm}
\label{thm:gc}
Suppose that $\mathcal{F}$ is $P_0$-Glivenko-Cantelli.
Then 
\begin{equation*}
\lVert \mathbb{P}_N^{\textup{H}} - \tilde{P}_0\rVert_{\mathcal{F}} = \sup_{f\in\mathcal{F}}|(\mathbb{P}_N^{\textup{H}} -\tilde{P}_0)f|\rightarrow_{P^*} 0,
\end{equation*}
where $\lVert \ell\rVert_{\mathcal{F}}=\sup_{f\in\mathcal{F}}|\ell(f)|$ for a functional $\ell$ on $\mathcal{F}$.
\end{thm}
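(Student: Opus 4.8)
The plan is to exploit the decomposition of $\mathbb{G}_N^{\textup{H}}$ already derived in Section \ref{sec:3}, namely
\begin{equation*}
\mathbb{P}_N^{\textup{H}} - \tilde{P}_0 = (\mathbb{P}_N - \tilde{P}_0) + \sum_{j=1}^J (\mathbb{P}_N^{\textup{H},(j)} - \mathbb{P}_N)\rho^{(j)},
\end{equation*}
and to bound the $\mathcal{F}$-supremum of each of the $J+1$ pieces separately by the triangle inequality. The first piece $\lVert \mathbb{P}_N - \tilde{P}_0\rVert_{\mathcal{F}} \to_{P^*} 0$ is just the classical Glivenko-Cantelli theorem for the i.i.d.\@ sample, which holds by hypothesis. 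So the whole problem reduces to showing that for each fixed $j$,
\begin{equation*}
\bigl\lVert (\mathbb{P}_N^{\textup{H},(j)} - \mathbb{P}_N)\rho^{(j)}\bigr\rVert_{\mathcal{F}} = \frac{N^{(j)}}{N}\bigl\lVert (\hat{\mathbb{P}}_{n^{(j)}}^{(j)} - \mathbb{P}_{N^{(j)}}^{(j)})\rho^{(j)}\bigr\rVert_{\mathcal{F}} \to_{P^*} 0.
\end{equation*}
Since $N^{(j)}/N \to \nu^{(j)} \le 1$ almost surely, the prefactor is harmless, so it suffices to control the bracketed term.

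The key step is to recognize, exactly as in the U-CLT heuristic in the text, that $\hat{\mathbb{P}}_{n^{(j)}}^{(j)}$ relative to $\mathbb{P}_{N^{(j)}}^{(j)}$ is the empirical measure of an exchangeably weighted (here, sampling-without-replacement / $m$-out-of-$n$) bootstrap sample drawn from the ``original'' sample of size $N^{(j)}$. I would therefore invoke the bootstrap uniform law of large numbers — e.g.\ the in-probability consistency statement for the exchangeably weighted bootstrap empirical measure under a Glivenko-Cantelli index class, as recorded in \cite{MR1385671} and extended to the unconditional mode in Section \ref{sec:uclt} / the Appendix. Concretely: the class $\rho^{(j)}\mathcal{F} = \{\rho^{(j)}(\cdot)f(\cdot): f \in \mathcal{F}\}$ is still $P_0^{(j)}$-Glivenko-Cantelli, because $\rho^{(j)}$ is a fixed bounded function (taking finitely many values in $[0,1]$ on the partition pieces of $\mathcal{V}$) and multiplication by a fixed bounded measurable function preserves the Glivenko-Cantelli property (a standard permanence result; see the preservation theorems in \cite{MR1385671}). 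With a bounded, measurable envelope for $\rho^{(j)}\mathcal{F}$ in hand (note $|\rho^{(j)}f| \le |f|$, so any envelope of $\mathcal{F}$ works, and $P_0$-Glivenko-Cantelli classes have integrable envelopes), the bootstrap U-LLN gives the desired in-(outer-)probability convergence to zero, conditionally given the first-stage data and hence, after integrating out, unconditionally under $P^*$.

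A few bookkeeping points need care. First, the bootstrap here is sampling without replacement with selection fraction $n^{(j)}/N^{(j)} \to p^{(j)} \in [c,1]$ bounded away from $0$, rather than the classical $m/n \to 0$ regime; as the text notes, this is covered by Præstgaard--Wellner's exchangeably weighted bootstrap \cite{MR1245301}, whose weight conditions are satisfied by the without-replacement weights, so the bootstrap U-LLN applies verbatim. Second, one must pass from conditional to unconditional convergence: since the conditional-on-data outer probability of $\{\lVert(\hat{\mathbb{P}}_{n^{(j)}}^{(j)} - \mathbb{P}_{N^{(j)}}^{(j)})\rho^{(j)}\rVert_{\mathcal{F}} > \varepsilon\}$ tends to $0$ almost surely and is bounded by $1$, dominated convergence yields the unconditional statement under $P^\infty$, and the outer-probability version follows from the measurability conventions set up just before this subsection. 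Third, the randomness of $N^{(j)}$ and $n^{(j)}$ is absorbed by conditioning on the first-stage data and on $\{N^{(j)}\}$, which is precisely the conditioning under which the bootstrap structure is exact. The main obstacle — really the only nontrivial point — is making the conditional-to-unconditional passage and the ``bootstrap-under-a-nonvanishing-resampling-fraction'' argument rigorous in the outer-probability framework; everything else is an application of the quoted Glivenko-Cantelli permanence and bootstrap U-LLN results together with the already-established decomposition.
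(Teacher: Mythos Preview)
Your proposal is correct and follows essentially the same route as the paper: decompose via the $J+1$-piece identity, handle the i.i.d.\ piece by the classical Glivenko-Cantelli theorem, and handle each source-$j$ piece by recognizing it as an exchangeably weighted bootstrap and invoking the bootstrap U-LLN together with the Glivenko-Cantelli preservation theorem for multiplication by the bounded function $\rho^{(j)}$. The only minor difference is in the conditional-to-unconditional passage: the paper obtains the unconditional statement by replacing the conditional multiplier inequality with the unconditional one inside the proof of the bootstrap Glivenko-Cantelli lemma, whereas you propose a dominated-convergence argument on the conditional outer probabilities; both are valid.
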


\subsection{Uniform Central Limit Theorem}
The empirical process $\mathbb{G}_N$  in the i.i.d.\@ setting weakly converges to a Gaussian process if the index set $\mathcal{F}$ is a Donsker class (see e.g. p.81 of \cite{MR1385671}).
This Donsker property is sufficient for the U-CLT for the H-empirical process $\mathbb{G}_N^{\textup{H}}$. 
This is an expected consequence from bootstrap asymptotics which does not need additional conditions.

\begin{thm}
\label{thm:D}
Suppose that $\mathcal{F}$ is $P_0$-Donsker.
Then 
\begin{equation*}
\mathbb{G}_N^{\textup{H}}(\cdot) \rightsquigarrow \mathbb{G}^{\textup{H}}(\cdot) \equiv \mathbb{G}(\cdot) + \sum_{j=1}^J \sqrt{\nu^{(j)}} \sqrt{\frac{1-p^{(j)}}{p^{(j)}}}\mathbb{G}^{(j)}(\rho^{(j)}\cdot)
\end{equation*}
in the class $\ell^{\infty}(\mathcal{F})$ of uniformly bounded
functionals on
$\mathcal{F}$ where the $P_0$-Brownian bridge process $\mathbb{G}$ and the $P_0^{(j)}$-Brownian bridge processes $\mathbb{G}^{(j)}$ are independent.
The covariance function $\upsilon(\cdot,\cdot) = \textup{Cov}(\mathbb{G}^{\textup{H}} \cdot,\mathbb{G}^{\textup{H}} \cdot)$ on $\mathcal{F}\times \mathcal{F}$ is
\begin{eqnarray*}
\upsilon(f,g)
=\textup{Cov}_0(f,g)+\sum_{j=1}^J \nu^{(j)}\frac{1-p^{(j)}}{p^{(j)}}\textup{Cov}_0^{(j)}(\rho^{(j)}f,\rho^{(j)}g).
\end{eqnarray*}
where $\textup{Cov}_0$ and $\textup{Cov}_{0}^{(j)}$ are  covariances under $P_0$ and $P_0^{(j)}$ respectively.
\end{thm}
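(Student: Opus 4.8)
The plan is to make rigorous the ``$J+1$ uncorrelated pieces'' decomposition stated just before the theorem and then upgrade it from uncorrelated to jointly asymptotically independent. Writing
\begin{equation*}
\mathbb{G}_N^{\textup{H}} f = A_N f + \sum_{j=1}^J B_N^{(j)} f,\qquad A_N f := \sqrt{N}(\mathbb{P}_N-\tilde{P}_0)f,\quad B_N^{(j)} f := \sqrt{N}(\mathbb{P}_N^{\textup{H},(j)}-\mathbb{P}_N)\rho^{(j)}f,
\end{equation*}
the goal is to show $(A_N,B_N^{(1)},\dots,B_N^{(J)})\rightsquigarrow(\mathbb{G},B^{(1)},\dots,B^{(J)})$ in $\ell^{\infty}(\mathcal{F})^{J+1}$ with the $J+1$ coordinates \emph{independent}, where $B^{(j)}=\sqrt{\nu^{(j)}}\,\sqrt{(1-p^{(j)})/p^{(j)}}\,\mathbb{G}^{(j)}(\rho^{(j)}\cdot)$. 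The stated additive form of $\mathbb{G}^{\textup{H}}$ then follows by continuous mapping applied to summation, and the covariance $\upsilon$ by adding the covariances of the now-independent pieces.

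The first coordinate is the ordinary empirical process $A_N=\mathbb{G}_N$ of the i.i.d.\ sample $(X_i,V_i)$, so $A_N\rightsquigarrow\mathbb{G}$ in $\ell^{\infty}(\mathcal{F})$ is immediate from $\mathcal{F}$ being $P_0$-Donsker. For each source term I would rewrite, as in the text, $B_N^{(j)}f=\sqrt{N^{(j)}/N}\,\sqrt{N^{(j)}}\bigl(\hat{\mathbb{P}}^{(j)}_{n^{(j)}}-\mathbb{P}^{(j)}_{N^{(j)}}\bigr)(\rho^{(j)}f)$ and condition on $\mathcal{Z}_N:=((X_i,V_i))_{i\le N}$ and the sizes. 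Given $N^{(j)}$ the units of source $j$ are i.i.d.\ $P_0^{(j)}$, $\hat{\mathbb{P}}^{(j)}_{n^{(j)}}$ is their bootstrap empirical measure for the exchangeable weights $W_{N^{(j)},i}=(N^{(j)}/n^{(j)})R^{(j)}_{(j),i}$ (summing to $N^{(j)}$ and, given $\mathcal{Z}_N$, independent of the data), and $\rho^{(j)}\mathcal{F}$ is $P_0^{(j)}$-Donsker by permanence of the Donsker property under multiplication by the fixed bounded function $\rho^{(j)}$ together with boundedness of $dP_0^{(j)}/d\tilde{P}_0$. These without-replacement weights satisfy the hypotheses of the exchangeably weighted bootstrap of \cite{MR1245301}: $\bar{W}_{N^{(j)}}=1$, the weights are bounded (by $N^{(j)}/n^{(j)}\to1/p^{(j)}$), so the Lindeberg and $\lVert\cdot\rVert_{2,1}$ conditions hold, and
\begin{equation*}
\frac{1}{N^{(j)}}\sum_{i=1}^{N^{(j)}}\bigl(W_{N^{(j)},i}-1\bigr)^2=\frac{N^{(j)}}{n^{(j)}}-1\;\longrightarrow\;\frac{1-p^{(j)}}{p^{(j)}}>0,
\end{equation*}
which pins down the limiting weight variance. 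The bootstrap CLT then yields conditional weak convergence of $\sqrt{N^{(j)}}(\hat{\mathbb{P}}^{(j)}_{n^{(j)}}-\mathbb{P}^{(j)}_{N^{(j)}})(\rho^{(j)}\cdot)$ to $\sqrt{(1-p^{(j)})/p^{(j)}}\,\mathbb{G}^{(j)}(\rho^{(j)}\cdot)$ in outer probability; since $n^{(j)}/N^{(j)}\to p^{(j)}\neq0$ this is not the usual $m$-out-of-$n$ regime but the instance of \cite{MR1245301} with nondegenerate limiting weight variance, and one also needs the (routine) extension of their conditional CLT to an unconditional one --- immediate since the limit law is nonrandom. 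Multiplying by $\sqrt{N^{(j)}/N}\to\sqrt{\nu^{(j)}}$ yields the limit $B^{(j)}$.

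To assemble the joint statement, note that conditionally on $\mathcal{Z}_N$ the indicators $R^{(1)},\dots,R^{(J)}$ are independent across sources and independent of the $\mathcal{Z}_N$-measurable $A_N$, so the conditional law of $(B_N^{(1)},\dots,B_N^{(J)})$ given $\mathcal{Z}_N$ converges in outer probability to the fixed product law $\bigotimes_{j}\mathcal{L}(B^{(j)})$, not depending on $\mathcal{Z}_N$. Combined with $A_N\rightsquigarrow\mathbb{G}$, the standard argument --- for bounded Lipschitz $\phi$, condition on $\mathcal{Z}_N$, use that $A_N$ is $\mathcal{Z}_N$-measurable and that averaging $\phi$ against $\bigotimes_j\mathcal{L}(B^{(j)})$ is a bounded continuous function of $A_N$, then pass to the limit --- gives $(A_N,B_N^{(1)},\dots,B_N^{(J)})\rightsquigarrow(\mathbb{G},B^{(1)},\dots,B^{(J)})$ with all coordinates independent. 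Summation then finishes the proof.

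I expect the main obstacle to be making this conditional-to-unconditional bootstrap passage fully rigorous: verifying the weight hypotheses of \cite{MR1245301} with the random subsample size $n^{(j)}$ in the regime $p^{(j)}\in(0,1)$, controlling outer probabilities and measurability so that the chain of $\rightsquigarrow$ statements composes (the ``$P^{*}$/$E^{*}$'' bookkeeping of the excerpt), and confirming that $\rho^{(j)}\mathcal{F}$ inherits the Donsker property under $P_0^{(j)}$. With those secured, the remaining ingredients --- the law of large numbers $N^{(j)}/N\to\nu^{(j)}$ and $n^{(j)}/N^{(j)}\to p^{(j)}$, and the centering identity $E[R^{(j)}_i\mid X_i,V_i,N^{(j)},n^{(j)}]=\pi^{(j)}(V_i)$ --- are routine.
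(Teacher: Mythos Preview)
Your proposal is correct and follows essentially the same route as the paper: the same $J+1$-term decomposition, the same appeal to the exchangeably weighted bootstrap CLT (the paper cites the version in van der Vaart--Wellner, Theorem 3.6.13, which is the Pr{\ae}stgaard--Wellner result you invoke), and the same Donsker preservation for $\rho^{(j)}\mathcal{F}$. The only notable difference is in how independence of the limiting pieces is secured: you argue it directly by noting that the conditional law of $(B_N^{(1)},\dots,B_N^{(J)})$ given $\mathcal{Z}_N$ converges to a \emph{fixed} product law and then run the bounded-Lipschitz conditioning argument, whereas the paper first obtains the unconditional marginal limits, then computes that all cross-covariances of the prelimit pieces vanish (via the law of total covariance and $E[R^{(j)}_i\mid\mathcal{Z}_N]=\pi^{(j)}(V_i)$), and finally uses that uncorrelated jointly Gaussian processes are independent. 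Your version is arguably cleaner about the joint-convergence step that the paper leaves somewhat implicit; the paper's version is shorter once joint tightness (from marginal tightness of each summand) and joint Gaussianity of the limit are accepted.
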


The asymptotic variance here admits natural interpretations.
Consider  $\mathbb{G}_N^{\textup{H}} f$ for estimation of $P_0f$ for instance. Its asymptotic variance is
\begin{eqnarray*}
\textup{AV}(\mathbb{G}_N^{\textup{H}} f)  = \underbrace{\textup{Var}_0\{f(X)\}}_{\mbox{population variance}} + \sum_{j=1}^J \underbrace{ \nu^{(j)}\frac{1-p^{(j)}}{p^{(j)}} \textup{Var}_{0}^{(j)}\{\rho^{(j)}(V)f(X)\}}_{\mbox{design variance from source $j$}},
\end{eqnarray*}
where $\textup{Var}_0(f)=\textup{Cov}_0(f,f)$ and $\textup{Var}_0^{(j)}(f)=\textup{Cov}^{(j)}_0(f,f)$.
The first and second terms correspond to sampling from population and  data sources respectively. If we would obtain the i.i.d.\@ sample instead, the asymptotic variance is only the first term $\textup{Var}_0\{f(X)\}$. This can be obtained from our formula if we would sample all items from each data source (i.e., $p^{(j)}=1$). 
This implies that as long as we sample all the items at the second
stage, combining multiple datasets does not increase the difficulty of estimation.
If data source $j$ is large (i.e.,
$\tilde{P}_0(V\in\mathcal{V}^{(j)})=\nu^{(j)}$ is large), its contribution to asymptotic variance becomes larger.
Each quantity in the variance formula is easily estimated by Hartleys' estimator of moments
(see also the Appendix \ref{sec:data} for variance estimators for several regression
models).

\begin{remark}
In Theorems \ref{thm:gc} and \ref{thm:D}, we assume Glivenko-Cantelli
and Donsker properties of $\mathcal{F}$ with respect to $P_0$ in
order to emphasize that these properties in the i.i.d.\@ setting are
sufficient for our setting. 
A brief inspection of our proof reveals that our theorems hold valid for $\tilde{P}_0$-Glivenko-Cantelli
and $\tilde{P}_0$-Donsker classes of functions defined on $\mathcal{W}=\mathcal{X}\times \mathcal{U}$.  
\end{remark}

\subsubsection{Finite-Population sampling}
Finite-population sampling concerns randomness only from the selection of units, and is often of interest in sampling theory.
As expected from our interpretation of asymptotic variance in Theorem \ref{thm:D}, we only obtain design variance from sources in this framework.
\begin{cor}
\label{cor:fpD}
Suppose that $\mathcal{F}$ is $P_0$-Donsker.
Then 
\begin{equation*}
\mathbb{G}_N^{\textup{H,fin}}(\cdot)\equiv \sqrt{N}(\mathbb{P}_N^{\textup{H}}-\mathbb{P}_N)(\cdot) \rightsquigarrow \mathbb{G}^{\textup{H,fin}}(\cdot)\equiv
\sum_{j=1}^J \sqrt{\nu^{(j)}} \sqrt{\frac{1-p^{(j)}}{p^{(j)}}}\mathbb{G}^{(j)}(\rho^{(j)}\cdot)
\end{equation*}
in $\ell^{\infty}(\mathcal{F})$ conditionally on $(X_1,V_1),(X_2,V_2)\ldots,$ with the covariance function $\upsilon^{\textup{fin}}(\cdot,\cdot) = \textup{Cov}(\mathbb{G}^{\textup{H,fin}} \cdot,\mathbb{G}^{\textup{H,fin}} \cdot)$ on $\mathcal{F}\times \mathcal{F}$given by
\begin{eqnarray*}
\upsilon^{\textup{fin}}(f,g)
= \sum_{j=1}^J \nu^{(j)}\frac{1-p^{(j)}}{p^{(j)}}\textup{Cov}_0^{(j)}(\rho^{(j)} f,\rho^{(j)}g).
\end{eqnarray*}
\end{cor}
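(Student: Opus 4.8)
The plan is to obtain Corollary~\ref{cor:fpD} as a byproduct of the proof of Theorem~\ref{thm:D}, since the finite-population process $\mathbb{G}_N^{\textup{H,fin}} = \sqrt{N}(\mathbb{P}_N^{\textup{H}}-\mathbb{P}_N)$ is exactly the second summand group in the decomposition
\[
\mathbb{G}_N^{\textup{H}} = \sqrt{N}(\mathbb{P}_N-\tilde{P}_0) + \sum_{j=1}^J \sqrt{N}(\mathbb{P}_N^{\textup{H},(j)}-\mathbb{P}_N)\rho^{(j)}.
\]
Concretely, the first step is to recall the rewriting $(\mathbb{P}_N^{\textup{H},(j)}-\mathbb{P}_N)\rho^{(j)}f = (N^{(j)}/N)(\hat{\mathbb{P}}^{(j)}_{n^{(j)}}-\mathbb{P}^{(j)}_{N^{(j)}})\rho^{(j)}f$ from \eqref{eqn:hempj}, so that, conditionally on the first-stage data $(X_i,V_i)$, each term is a scaled difference between a without-replacement subsample average of size $n^{(j)}$ and the full source-$j$ sample average of size $N^{(j)}$. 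The second step is to invoke the exchangeably weighted bootstrap U-CLT of \cite{MR1245301}: conditionally on $(X_{(j),i},V_{(j),i})$, $i=1,\ldots,N^{(j)}$, the process $\sqrt{N^{(j)}}(\hat{\mathbb{P}}^{(j)}_{n^{(j)}}-\mathbb{P}^{(j)}_{N^{(j)}})(\rho^{(j)}\cdot)$ converges weakly in $\ell^\infty(\mathcal{F})$ to $\sqrt{(1-p^{(j)})/p^{(j)}}\,\mathbb{G}^{(j)}(\rho^{(j)}\cdot)$, a $P_0^{(j)}$-Brownian bridge scaled by the design factor, using that $\mathcal{F}$ is $P_0$-Donsker (hence $P_0^{(j)}$-Donsker, as $P_0^{(j)}$ is $P_0$ restricted to $\mathcal{V}^{(j)}$ up to the normalization $\nu^{(j)}$) and that $n^{(j)}/N^{(j)}\to p^{(j)}$.

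The third step is to assemble the $J$ pieces. Because sampling from distinct sources is carried out independently given $(V_1,\ldots,V_N)$, the conditional limit processes $\mathbb{G}^{(j)}$ are mutually independent; combined with the scaling $N^{(j)}/N = (N^{(j)}/N)$ and $\sqrt{N}\cdot(N^{(j)}/N)\cdot(1/\sqrt{N^{(j)}}) = \sqrt{N^{(j)}/N}\to\sqrt{\nu^{(j)}}$ by the law of large numbers for $N^{(j)}/N$, one gets
\[
\mathbb{G}_N^{\textup{H,fin}}(\cdot) = \sum_{j=1}^J \sqrt{\tfrac{N^{(j)}}{N}}\,\sqrt{N^{(j)}}(\hat{\mathbb{P}}^{(j)}_{n^{(j)}}-\mathbb{P}^{(j)}_{N^{(j)}})(\rho^{(j)}\cdot)\;\rightsquigarrow\;\sum_{j=1}^J \sqrt{\nu^{(j)}}\sqrt{\tfrac{1-p^{(j)}}{p^{(j)}}}\,\mathbb{G}^{(j)}(\rho^{(j)}\cdot)
\]
conditionally on $(X_1,V_1),(X_2,V_2),\ldots$. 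The covariance formula $\upsilon^{\textup{fin}}(f,g) = \sum_{j=1}^J \nu^{(j)}\frac{1-p^{(j)}}{p^{(j)}}\,\textup{Cov}_0^{(j)}(\rho^{(j)}f,\rho^{(j)}g)$ then follows from the covariance structure of a $P_0^{(j)}$-Brownian bridge evaluated at $\rho^{(j)}f$ and $\rho^{(j)}g$ together with independence across $j$; note the cross terms vanish. This is the "design variance" part of $\upsilon$ in Theorem~\ref{thm:D}, consistent with the interpretation given there (it is what remains when the first-stage i.i.d.\@ fluctuation $\sqrt{N}(\mathbb{P}_N-\tilde{P}_0)$ is removed).

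The main obstacle is the careful handling of the \emph{conditional} weak convergence and, in particular, verifying that the conditioning sets are compatible and that the randomness of $n^{(j)}$ and of $\pi^{(j)}(\cdot)$ (which depend on $N^{(j)}$) does not spoil the application of \cite{MR1245301}: one must condition on $N^{(j)}$ and the $(X_{(j),i},V_{(j),i})$ simultaneously, check that the exchangeable-weights conditions of \cite{MR1245301} hold for the without-replacement weights on a set of first-stage realizations of probability tending to one, and then integrate out. A secondary technical point is measurability of the suprema over $\mathcal{F}$, handled via outer expectation as flagged in the paragraph preceding Section~3.1. Since all of these are already dealt with in the rigorous proof of Theorem~\ref{thm:D} in Appendix~\ref{sec:uclt}, the proof of the corollary should reduce to pointing at the relevant portion of that argument and noting that dropping the $\sqrt{N}(\mathbb{P}_N-\tilde{P}_0)$ term and retaining the conditional statement yields precisely the claim.
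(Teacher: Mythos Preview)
Your proposal is correct and follows essentially the same route as the paper: the paper's one-line proof simply applies Theorem~3.6.13 of \cite{MR1385671} (the exchangeably weighted bootstrap U-CLT, which is the textbook version of the \cite{MR1245301} result you cite) conditionally to the second group of terms in the decomposition~(\ref{eqn:decompmult}). Your write-up spells out exactly those steps in more detail, including the scaling and the conditional independence across sources, so there is no substantive difference in approach.
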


\subsubsection{Bernoulli Sampling}
Sampling without replacement is often replaced by Bernoulli sampling for mathematical convenience. 
To see its consequence, we consider Bernoulli sampling within sources where
selections from source $j$ are i.i.d.\@ Bernoulli$(p^{(j)})$.
Data from the same source then become independent, but dependence remains between datasets from overlapping sources. 
We write $\mathbb{G}_N^{\textup{H,Ber}}$ for the H-empirical process in this case.

\begin{thm}
\label{thm:BerD}
Suppose that $\mathcal{F}$ is $P_0$-Donsker.
Then $\mathbb{G}_N^{\textup{H,Ber}}\rightsquigarrow \mathbb{G}^{\textup{H,Ber}}$  in $\ell^{\infty}(\mathcal{F})$  where $\mathbb{G}^{\textup{H,Ber}}$ is  the zero-mean Gaussian process $\mathbb{G}^{\textup{H,Ber}}$ with covariance function
$\upsilon^{\textup{Ber}}(\cdot,\cdot) = \textup{Cov}(\mathbb{G}^{\textup{H,Ber}} \cdot,\mathbb{G}^{\textup{H,Ber}} \cdot)$ on $\mathcal{F}\times \mathcal{F}$ given by
\begin{equation*}
\upsilon^{\textup{Ber}}(f,g)=\textup{Cov}_0( f,g) + \sum_{j=1}^J\nu^{(j)} \frac{1-p^{(j)}}{p^{(j)}} P_0^{(j)}\left\{(\rho^{(j)})^2fg\right\}.
\end{equation*}
\end{thm}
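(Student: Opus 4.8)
The plan is to exploit the fact that, under Bernoulli sampling, the whole construction collapses to an ordinary i.i.d.\ empirical process, so that neither the source-wise decomposition nor the bootstrap asymptotics behind Theorem~\ref{thm:D} and Corollary~\ref{cor:fpD} are needed. The point is that with Bernoulli$(p^{(j)})$ selection the probabilities $\pi^{(j)}(v)=p^{(j)}I\{v\in\mathcal V^{(j)}\}$ are deterministic, and the augmented observations $Z_i=(X_i,V_i,R_i^{(1)},\ldots,R_i^{(J)})$, $i=1,\ldots,N$, are i.i.d.: the $V_i$ are i.i.d.\ by assumption, and given $V_1,\ldots,V_N$ the indicators are independent across $i$ and across $j$ with $R_i^{(j)}\sim$ Bernoulli$(p^{(j)}I\{V_i\in\mathcal V^{(j)}\})$. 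Writing $h(z)=\sum_{j=1}^J R^{(j)}\rho^{(j)}(v)/p^{(j)}$ (with the convention $0/0=0$, so the indicators $I\{v\in\mathcal V^{(j)}\}$ are absorbed because $\rho^{(j)}$ vanishes off $\mathcal V^{(j)}$) and $g_f(z)=h(z)f(x)$, one has $\mathbb P_N^{\textup{H,Ber}}f=N^{-1}\sum_{i=1}^N g_f(Z_i)$, so $\mathbb G_N^{\textup{H,Ber}}=\sqrt N(\mathbb Q_N-Q)$ is the classical empirical process indexed by $\mathcal G=\{g_f:f\in\mathcal F\}$, where $Q$ is the law of $Z_1$ and $\mathbb Q_N$ its empirical measure.

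First I would check the mean. Since $E[R_i^{(j)}\mid X_i,V_i]=p^{(j)}I\{V_i\in\mathcal V^{(j)}\}$ and $\sum_j\rho^{(j)}\equiv1$ with $\rho^{(j)}=0$ off $\mathcal V^{(j)}$, one gets $E[g_f(Z_1)\mid X_1,V_1]=f(X_1)$, hence $Qg_f=\tilde P_0f=P_0f$; this is exactly the unbiasedness already noted for Hartley's estimator and it identifies the limit as centered. Next I would verify that $\mathcal G$ is $Q$-Donsker. The function $h$ is a \emph{fixed} measurable function, uniformly bounded by $J/c$ since $\rho^{(j)}\in[0,1]$, $R^{(j)}\in\{0,1\}$, $p^{(j)}\ge c>0$; and $\mathcal F$, viewed as a class of functions of $Z$ depending on $x$ alone, is $Q$-Donsker because adjoining coordinates on which functions do not depend, and passing between $P_0$ and $\tilde P_0$, leaves $L_2$ seminorms and bracketing/uniform entropies unchanged (cf.\ the Remark after Theorem~\ref{thm:D}). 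Since $f\mapsto hf$ is Lipschitz with constant $\|h\|_\infty$ and sends the square-integrable envelope $F$ of $\mathcal F$ to the square-integrable envelope $\|h\|_\infty F$, the preservation results for Donsker classes in \cite{MR1385671} (multiplication of a Donsker class by a fixed bounded measurable function) give that $\mathcal G$ is $Q$-Donsker; the measurability hypotheses required by the i.i.d.\ U-CLT are inherited from $\mathcal F$ through this continuous map. The classical Donsker theorem then yields $\mathbb G_N^{\textup{H,Ber}}\rightsquigarrow\mathbb G^{\textup{H,Ber}}$ in $\ell^\infty(\mathcal F)$ with $\mathbb G^{\textup{H,Ber}}$ a tight zero-mean Gaussian process of covariance $\textup{Cov}_Q(g_f,g_g)$.

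It then remains to identify this covariance with $\upsilon^{\textup{Ber}}$. Conditioning on $(X_1,V_1)$ and using $E[R^{(j)}R^{(k)}\mid X_1,V_1]=p^{(j)}p^{(k)}I\{V_1\in\mathcal V^{(j)}\}I\{V_1\in\mathcal V^{(k)}\}$ for $j\ne k$ (independence across sources) together with $E[(R^{(j)})^2\mid X_1,V_1]=p^{(j)}I\{V_1\in\mathcal V^{(j)}\}$, and absorbing the indicators via $\rho^{(j)}I\{v\in\mathcal V^{(j)}\}=\rho^{(j)}$ and $\sum_{j\ne k}\rho^{(j)}\rho^{(k)}=1-\sum_j(\rho^{(j)})^2$, one gets $E[g_f g_g\mid X_1,V_1]=f(X_1)g(X_1)+\sum_j\frac{1-p^{(j)}}{p^{(j)}}(\rho^{(j)}(V_1))^2f(X_1)g(X_1)$. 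Taking expectations under $\tilde P_0$, recognizing $\tilde P_0\{(\rho^{(j)})^2fg\}=\nu^{(j)}P_0^{(j)}\{(\rho^{(j)})^2fg\}$ from the definition of $P_0^{(j)}$, and subtracting $(P_0f)(P_0g)$ yields $\upsilon^{\textup{Ber}}(f,g)=\textup{Cov}_0(f,g)+\sum_{j=1}^J\nu^{(j)}\frac{1-p^{(j)}}{p^{(j)}}P_0^{(j)}\{(\rho^{(j)})^2fg\}$, as claimed.

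I expect the only genuinely delicate point to be the Donsker preservation step: one must be sure that $\mathcal F$ carries a square-integrable envelope (available from the i.i.d.\ Donsker hypothesis), that moving from functions of $X$ under $P_0$ to functions of $Z$ under $Q$ does not disturb entropy bounds, and that $\mathcal G$ meets the measurability condition used by the version of the i.i.d.\ U-CLT invoked. The i.i.d.\ reduction and the covariance computation are otherwise routine bookkeeping with the weights $\rho^{(j)}$ and the Bernoulli moments.
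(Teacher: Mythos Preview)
Your proposal is correct and follows essentially the same route as the paper: recognize that under Bernoulli sampling the augmented observations $(X_i,V_i,R_i^{(1)},\ldots,R_i^{(J)})$ are i.i.d., apply a Donsker preservation result (the paper cites Theorem~2.10.6 of \cite{MR1385671}) to the class $\{z\mapsto\sum_j r^{(j)}\rho^{(j)}(v)/p^{(j)}\,f(x):f\in\mathcal F\}$, and then simplify the covariance by conditioning on $(X,V)$---what the paper calls the law of total covariance. One small remark: the $P_0$-Donsker hypothesis does not by itself furnish a square-integrable envelope, but this is harmless because the preservation step you need (multiplication of a Donsker class by a single fixed bounded measurable function $h$) does not require one.
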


Bernoulli sampling yields larger asymptotic variance than sampling
without replacement.
As expected from the decomposition of the asymptotic variance, the difference appears only in the design variances.
\begin{cor}[Finite-Population Correction]
The asymptotic variance is smaller when subsamples from sources are obtained from sampling without replacement than from Bernoulli sampling. In particular, 
\begin{equation*}
\textup{AV}(\mathbb{G}_N^{\textup{H}} f) = \textup{AV}(\mathbb{G}_N^{\textup{H,Ber}} f) - \sum_{j=1}^J\nu^{(j)} \frac{1-p^{(j)}}{p^{(j)}} \{P_0^{(j)}\rho^{(j)}(V)f(X) \}^{ 2}.
\end{equation*}
\end{cor}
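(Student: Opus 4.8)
The plan is to derive the identity by directly comparing the two asymptotic-variance formulas supplied by Theorems~\ref{thm:D} and~\ref{thm:BerD} and then invoking the elementary decomposition of a second moment into variance plus squared mean. From Theorem~\ref{thm:D}, specialized to $g=f$,
\begin{equation*}
\textup{AV}(\mathbb{G}_N^{\textup{H}} f) = \textup{Var}_0\{f(X)\} + \sum_{j=1}^J \nu^{(j)}\frac{1-p^{(j)}}{p^{(j)}}\,\textup{Var}_0^{(j)}\{\rho^{(j)}(V)f(X)\},
\end{equation*}
and from Theorem~\ref{thm:BerD},
\begin{equation*}
\textup{AV}(\mathbb{G}_N^{\textup{H,Ber}} f) = \textup{Var}_0\{f(X)\} + \sum_{j=1}^J \nu^{(j)}\frac{1-p^{(j)}}{p^{(j)}}\,P_0^{(j)}\bigl\{(\rho^{(j)})^2 f^2\bigr\}.
\end{equation*}

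Since the two expressions share the population-variance term $\textup{Var}_0\{f(X)\}$, their difference reduces to a term-by-term comparison of the design variances. For each $j$ I would apply the identity $\textup{Var}_0^{(j)}\{\rho^{(j)}(V)f(X)\} = P_0^{(j)}\{(\rho^{(j)})^2 f^2\} - \{P_0^{(j)}\rho^{(j)}(V)f(X)\}^2$, which is legitimate because $\rho^{(j)}$ takes values in $[0,1]$ and $f$ is square-integrable under $P_0^{(j)}$ whenever $\mathcal{F}$ is $P_0$-Donsker, so every moment in sight is finite. Substituting this into the first display and subtracting it from the second yields
\begin{equation*}
\textup{AV}(\mathbb{G}_N^{\textup{H,Ber}} f) - \textup{AV}(\mathbb{G}_N^{\textup{H}} f) = \sum_{j=1}^J \nu^{(j)}\frac{1-p^{(j)}}{p^{(j)}}\,\bigl\{P_0^{(j)}\rho^{(j)}(V)f(X)\bigr\}^2,
\end{equation*}
which is precisely the claimed identity after transposing terms.

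Finally, for the qualitative assertion that the without-replacement variance is no larger, I would observe that for each $j$ the summand on the right is the product of the nonnegative factor $\nu^{(j)}(1-p^{(j)})/p^{(j)}$ --- nonnegative because $\nu^{(j)}\ge 0$ and $0 < p^{(j)} \le 1$ --- with a square, hence nonnegative, so the entire subtracted sum is $\ge 0$. This is the multi-source analogue of the classical finite-population correction factor. I do not anticipate any genuine obstacle here: the only points needing care are the finiteness of the moments (guaranteed by the Donsker hypothesis) and the range of $p^{(j)}$ (guaranteed by the standing assumption $p^{(j)} \ge c > 0$), and all of the substantive work has already been carried out in establishing Theorems~\ref{thm:D} and~\ref{thm:BerD}.
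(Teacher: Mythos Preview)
Your proof is correct and is precisely the argument the paper intends: the corollary is stated without proof in the paper because it follows immediately by subtracting the variance formula of Theorem~\ref{thm:D} from that of Theorem~\ref{thm:BerD} and applying $\textup{Var}_0^{(j)}(\rho^{(j)}f)=P_0^{(j)}\{(\rho^{(j)})^2f^2\}-\{P_0^{(j)}\rho^{(j)}f\}^2$ termwise. There is nothing to add.
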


\subsubsection{Optimal $\rho$}
We derive the optimal weight function $\rho$ based on  our U-CLT.
We propose the use of the optimal $\rho$ under Bernoulli sampling 
which only involves $p^{(j)}$ determined by design.
The optimal $\rho$ under sampling without replacement involves an
estimand itself and should differ from
parameter to parameter.
We show the optimal choice under Bernoulli sampling works well under sampling without replacement in simulation studies in Section \ref{sec:5}.
 
\begin{prop}[Optimal $\rho$ under Bernoulli Sampling]
\label{prop:optBer}
Let $f:\mathcal{X}\rightarrow \mathbb{R}^k$ be arbitrary with $P_0f^{2}<\infty$.
Let $od(p)=(1-p)/p$. 
When $J=2$,
the optimal function $\rho$  that minimizes the asymptotic variance of
$\mathbb{G}^{\textup{H,Ber}}_Nf$ has  
\begin{equation*}
c^{(1)}=\frac{od(p^{(2)})}{od(p^{(1)})+od(p^{(2)})} ,\quad c^{(2)}=\frac{od(p^{(1)})}{od(p^{(1)})+od(p^{(2)})}.
\end{equation*}
When $J\geq 2$,
the optimal function $\rho$  that minimizes the asymptotic variance of
$\mathbb{G}^{\textup{H,Ber}}_Nf$ has (1) $c^{(j)}_{k_1,\ldots,k_l}=0$
if $p^{(j)}<1$ and $p^{(k_m)}=1$ for some $m$,
(2) arbitrary $c^{(j)}_{k_1,\ldots,k_l}$ 
if $p^{(j)}=1$, and (3)
\begin{eqnarray*}
c^{(j)}_{k_1,\ldots,k_l} = \frac{\prod_{m=1}^lod(p^{(k_m)})}{\prod_{m=1}^lod(p^{(k_m)}) + od(p^{(j)})\sum_{n=1}^l\prod_{m=1}^lod(p^{(k_m)})/od(p^{(k_n)}) },
\end{eqnarray*}
if $p^{(j)},p^{(k_m)} <1,m=1,\ldots,l$.
\end{prop}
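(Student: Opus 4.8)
The plan is to minimize the design-variance contribution in the covariance function $\upsilon^{\textup{Ber}}(f,f)$ from Theorem \ref{thm:BerD} over admissible weight functions $\rho$. The population variance term $\textup{Var}_0\{f(X)\}$ does not depend on $\rho$, so it suffices to minimize
\begin{equation*}
\Psi(\rho) \;=\; \sum_{j=1}^J \nu^{(j)}\,od(p^{(j)})\; P_0^{(j)}\!\left\{(\rho^{(j)})^2 f^2\right\},
\end{equation*}
subject to the pointwise constraint $\sum_{j=1}^J \rho^{(j)}(v) = 1$ for every $v$, with $\rho^{(j)}(v)=0$ whenever $v\notin\mathcal{V}^{(j)}$, and with each $\rho^{(j)}$ constant on the mutually exclusive cells of $\mathcal{V}$ determined by the $\mathcal{V}^{(k)}$'s. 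The key structural observation is that $\Psi$ \emph{decouples cell by cell}: on a cell $\mathcal{C}=\mathcal{V}^{(j)}\cap(\cap_{m=1}^l\mathcal{V}^{(k_m)})\cap(\cup_{m\notin\{j,k_1,\ldots,k_l\}}\mathcal{V}^{(m)})^c$, only the $l+1$ constants $c^{(j)}_{k_1,\ldots,k_l}$ and $c^{(k_n)}_{j,k_1,\ldots,\widehat{k_n},\ldots,k_l}$, $n=1,\ldots,l$, appear, and they contribute a term proportional to $\big(\sum$ of $\nu\cdot od(p)\cdot$(that constant)$^2\big)\cdot\int_{\mathcal{C}}f^2$. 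Using $\nu^{(j)}P_0^{(j)}(\mathbf{1}_{\mathcal{C}}f^2)=\tilde P_0(\mathbf{1}_{\mathcal{C}}f^2)$, the common factor $\tilde P_0(\mathbf{1}_{\mathcal{C}}f^2)\geq 0$ pulls out of every cell, so the optimal constants on $\mathcal{C}$ do \emph{not} depend on $f$: we are left, on each cell, with minimizing $\sum_{s} a_s x_s^2$ subject to $\sum_s x_s = 1$, where the index $s$ runs over the sources containing that cell and $a_s = od(p^{(s)})$ (up to the irrelevant common factor).

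Next I would solve this elementary finite-dimensional problem. When all $a_s<\infty$ (i.e.\ all relevant $p^{(s)}<1$), a Lagrange-multiplier computation — or Cauchy--Schwarz, since $1 = \sum_s x_s = \sum_s (\sqrt{a_s}x_s)(1/\sqrt{a_s}) \le (\sum_s a_s x_s^2)^{1/2}(\sum_s 1/a_s)^{1/2}$ with equality iff $x_s \propto 1/a_s$ — gives the minimizer $x_s = (1/a_s)/\sum_{t}(1/a_t)$. Translating $1/od(p) = p/(1-p)$ back: for $J=2$ the single nontrivial cell is $\mathcal{V}^{(1)}\cap\mathcal{V}^{(2)}$ with $a_1=od(p^{(1)})$, $a_2=od(p^{(2)})$, yielding $c^{(1)} = (1/od(p^{(1)}))/(1/od(p^{(1)})+1/od(p^{(2)})) = od(p^{(2)})/(od(p^{(1)})+od(p^{(2)}))$ and symmetrically for $c^{(2)}$, matching the statement. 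For general $J$ on the cell where sources $j,k_1,\ldots,k_l$ overlap, the minimizer of the coordinate attached to source $j$ is
\begin{equation*}
c^{(j)}_{k_1,\ldots,k_l} \;=\; \frac{1/od(p^{(j)})}{\,1/od(p^{(j)}) + \sum_{n=1}^l 1/od(p^{(k_n)})\,};
\end{equation*}
multiplying numerator and denominator by $\prod_{m=1}^l od(p^{(k_m)})$ turns this into the displayed expression, since $1/od(p^{(j)})\cdot\prod_m od(p^{(k_m)}) = \prod_m od(p^{(k_m)})/\,\big(od(p^{(j)})\cdot$ [implicit] $\big)$ — more precisely one checks the algebra $\prod_m od(p^{(k_m)})/od(p^{(j)}) \cdot od(p^{(j)}) = \prod_m od(p^{(k_m)})$ for the numerator and $\sum_n 1/od(p^{(k_n)}) \cdot \prod_m od(p^{(k_m)}) = \sum_n \prod_m od(p^{(k_m)})/od(p^{(k_n)})$ for the second denominator term. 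Finally, the degenerate cases: if $p^{(j)}<1$ but some $p^{(k_m)}=1$, then $a_{k_m}=od(1)=0$, so the objective $\sum_s a_s x_s^2$ on that cell is minimized (to $0$ in the $x_j$-part) by putting all mass on the zero-cost coordinates, forcing $c^{(j)}_{k_1,\ldots,k_l}=0$; and if $p^{(j)}=1$, the coordinate $x_j$ carries zero cost, so its value is immaterial, giving (2).

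The main obstacle is bookkeeping rather than mathematics: one must (i) verify that the cell-decoupling is genuine — i.e.\ that each coefficient $c^{(j)}_{k_1,\ldots,k_l}$ enters $\Psi$ only through the single cell on which $\rho^{(j)}$ equals it, which follows because the cells form a partition and each $\rho^{(j)}$ is, by definition, constant on each cell with value $0$ off $\mathcal{V}^{(j)}$ — and (ii) handle the constraint $\sum_j\rho^{(j)}(v)=1$ correctly cell by cell, noting that on the cell indexed by $(j,k_1,\ldots,k_l)$ the constraint reads $c^{(j)}_{k_1,\ldots,k_l} + \sum_{n=1}^l c^{(k_n)}_{(j,k_1,\ldots,k_l)\setminus k_n} = 1$, and that these constraints for different cells involve disjoint sets of unknowns, so the joint minimization splits into independent per-cell problems. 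Once these two points are in place, the result is immediate from the Cauchy--Schwarz / Lagrange argument above, and the claimed formulas follow by the indicated algebraic rearrangement.
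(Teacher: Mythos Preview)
Your proposal is correct and follows essentially the same approach as the paper's proof: isolate the part of the asymptotic variance $\upsilon^{\textup{Ber}}(f,f)$ that depends on the constants $c^{(\cdot)}$, observe it factors as a quadratic in those constants times a nonnegative scalar independent of them, and minimize under the sum-to-one constraint. The paper's proof is very terse (it writes out only the $J=2$ case and dismisses $J>2$ as ``similar''), whereas you supply the cell-by-cell decoupling, the identity $\nu^{(j)}P_0^{(j)}(\mathbf{1}_{\mathcal{C}}f^2)=\tilde P_0(\mathbf{1}_{\mathcal{C}}f^2)$ that eliminates the $\nu^{(j)}$ from the optimization, the explicit Cauchy--Schwarz/Lagrange argument, and the degenerate-case analysis --- all of which the paper leaves implicit.
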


For sampling without replacement, we treat the case $J=2$ only.
The general case can be similarly derived via quadratic programming. 
\begin{prop}[Optimal $\rho$ under Sampling without Replacement]
\label{prop:optWR}
  Let $f:\mathcal{X}\rightarrow \mathbb{R}^k$ be a function with $P_0f^{2}<\infty$.
  Let $Y_f \equiv f(X)I\{V\in\mathcal{V}^{(1)}\cap \{\mathcal{V}^{(2)}\}^c\}$ and $Z_f \equiv f(X)I\{V\in\mathcal{V}^{(1)}\cap \mathcal{V}^{(2)}\}$.
 Define 
\begin{equation*}
c_f\equiv\frac{-\nu^{(1)}od(p^{(1)})P_0^{(1)}Y_fP_0^{(1)}Z_f\ +\nu^{(2)}od(p^{(2)})\left\{P_0^{(2)}Y_fP_0^{(2)}Z_f -\textup{Var}_0^{(2)}(Z_f)\right\} }{\nu^{(1)}od(p^{(1)})\textup{Var}_0^{(1)}(Z_f) +\nu^{(2)}od(p^{(2)})\textup{Var}_0^{(2)}(Z_f)}.
\end{equation*}
When $J=2$, the optimal function $\rho$ that minimizes the asymptotic
variance of $\mathbb{G}^{\textup{H}}_Nf$ has $c^{(1)}=0\vee c_f \wedge
1$ and $c^{(2)} = 1-c$.
\end{prop}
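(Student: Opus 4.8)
The plan is to reduce the statement to a one–parameter convex quadratic minimization over $c\equiv c^{(1)}\in[0,1]$ (with $c^{(2)}=1-c$) and to read off $c_f$ as its unconstrained minimizer. First I would invoke Theorem~\ref{thm:D}, applied to the singleton (hence trivially Donsker) class $\{f\}$ componentwise, to express the asymptotic variance of $\mathbb{G}_N^{\textup{H}}f$ as $\textup{Var}_0\{f(X)\}+\sum_{j=1}^{2}\nu^{(j)}od(p^{(j)})\textup{Var}_0^{(j)}\{\rho^{(j)}(V)f(X)\}$, of which only the last two terms depend on $\rho$. For $J=2$ the weight $\rho$ is constant on the three cells $\mathcal{V}^{(1)}\cap\{\mathcal{V}^{(2)}\}^{c}$, $\{\mathcal{V}^{(1)}\}^{c}\cap\mathcal{V}^{(2)}$, $\mathcal{V}^{(1)}\cap\mathcal{V}^{(2)}$, so, identically, $\rho^{(1)}(V)f(X)=Y_f+cZ_f$ and $\rho^{(2)}(V)f(X)=T_f+(1-c)Z_f$, where $T_f\equiv f(X)I\{V\in\{\mathcal{V}^{(1)}\}^{c}\cap\mathcal{V}^{(2)}\}$ and $Y_f,Z_f$ are as in the statement.

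The key structural observation is that $Y_f$, $T_f$, $Z_f$ have pairwise disjoint supports, so $Y_fZ_f=T_fZ_f=0$ almost surely; this forces $\textup{Cov}_0^{(1)}(Y_f,Z_f)=-(P_0^{(1)}Y_f)(P_0^{(1)}Z_f)$ and $\textup{Cov}_0^{(2)}(T_f,Z_f)=-(P_0^{(2)}T_f)(P_0^{(2)}Z_f)$, and also $P_0^{(2)}Y_f=0$, which is precisely what collapses every cross-moment into the products of means appearing in $c_f$. Expanding $\textup{Var}_0^{(1)}(Y_f+cZ_f)$ and $\textup{Var}_0^{(2)}(T_f+(1-c)Z_f)$ as quadratics in $c$, the asymptotic variance becomes a quadratic $Q(c)$ whose leading coefficient $a\equiv\nu^{(1)}od(p^{(1)})\textup{Var}_0^{(1)}(Z_f)+\nu^{(2)}od(p^{(2)})\textup{Var}_0^{(2)}(Z_f)$ is nonnegative, so $Q$ is convex. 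When $a>0$, solving $Q'(c)=0$ and substituting the reduced covariances yields exactly the stated $c_f$ as the unique global minimizer over $\RR$; since $c^{(1)}$ is constrained to $[0,1]$ by the definition of $\rho$ and $Q$ is convex, the constrained minimizer is the Euclidean projection $c^{(1)}=0\vee c_f\wedge 1$, and $c^{(2)}=1-c^{(1)}$. For vector-valued $f$ the same computation applied to $\mathrm{tr}\,\textup{AV}(\mathbb{G}_N^{\textup{H}}f)$ (equivalently, summing the $k$ scalar identities) gives the same conclusion with the products of means read as inner products $\langle P_0^{(j)}\cdot,P_0^{(j)}\cdot\rangle$ and the variances as $E_0^{(j)}\lVert\cdot\rVert^{2}-\lVert P_0^{(j)}\cdot\rVert^{2}$.

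The only point I would spell out with care is the degenerate case $a=0$, i.e.\ $\textup{Var}_0^{(1)}(Z_f)=\textup{Var}_0^{(2)}(Z_f)=0$: then $Z_f$ vanishes on the ``source-$j$-only'' cell, which carries positive $P_0^{(j)}$-mass unless one source is contained in the other, so $Z_f=0$ a.s.\ (with the obvious modification in the nested case), $Q$ is constant, and any $c\in[0,1]$ is optimal, consistent with reading $0\vee c_f\wedge 1$ under the $0/0$ convention. Apart from this edge case there is no real obstacle, since Theorem~\ref{thm:D} supplies the variance formula that does all the work; the genuine labor is the bookkeeping of the disjoint-support cancellations and the sign tracking (in particular differentiating the $(1-c)^2$ term correctly) that make the numerator of $c_f$ come out exactly as stated.
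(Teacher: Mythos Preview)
Your approach is correct and is exactly what the paper intends: its proof of this proposition is simply ``similar to the proof of Proposition~\ref{prop:optBer} above and is omitted,'' and that earlier proof is precisely the reduction to a constrained one-parameter quadratic minimization you carry out. Your additional care with the disjoint-support cancellations, the convex projection onto $[0,1]$, and the degenerate $a=0$ case goes beyond what the paper spells out but is fully in line with its indicated method.
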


In a finite-population framework, \cite{MR2324141} derived optimal $\rho$ for general complex surveys. 
Their optimal $\rho$ agrees with ours under Bernoulli sampling, but they differ under sampling without replacement.
The difference is due to their probabilistic framework where
\cite{MR2324141} minimizes variance of $\mathbb{P}_N^{\textup{H}} f$
(which is zero in the limit) rather than the asymptotic variance of  $\mathbb{G}_N^{\textup{H}}f$.

\section{Calibration}
\label{sec:4}
The H-empirical process is computed from selected units only.
If information on  auxiliary variables $V$ are available for non-selected units,
calibration methods improve efficiency of our estimator.
The key idea for calibration is that a statistic computed from sampled
units (e.g. $\mathbb{P}_N^{\textup{H}} V$) is 
approximately equal to  a statistic computed from all units
(e.g. $\mathbb{P}_NV$).
Adjusting weights in $\mathbb{P}_N^{\textup{H}}$ that induce similarity between two statistics makes selected units more representative of the population. 
Different methods use different pairs of two statistics. 
Below, we first introduce the extension of \cite{Ranalli2015} to a general
$J\geq 2$ and then propose our method. 

The original calibration \cite{MR1173804} ((2.3) of p. 377) equates the Horvitz-Thompson estimator \cite{MR0053460} of $\tilde{P}_0V$ and sample average $\mathbb{P}_NV$ in order to improve the Horvitz-Thompson estimator of $P_0X$. 
Along the same line, \cite{Ranalli2015} imposed a constraint on Hartley's estimator $\mathbb{P}_N^{\textup{H}}V$ and sample average $\mathbb{P}_NV$ to improve $\mathbb{P}_N^{\textup{H}}X$ when $J=2$.
For a general $J$ we consider as its extension
the following {\it calibration equation}:
\begin{equation}
\label{eqn:caleqn}
\mathbb{P}_N^{\textup{H}} G(V^T\alpha) V = \mathbb{P}_NV,
\end{equation}
with a solution $\hat{\alpha}_N^{\textup{c}}$.
Here $G$ is a fixed function (see \cite{MR1173804} for some choice of $G$).
Using $G(V^T\hat{\alpha}_N^{\textup{c}})$, the calibrated H-empirical measure is defined as 
\begin{equation*}
\mathbb{P}_N^{\textup{H,c}} (\cdot) \equiv \mathbb{P}_N^{\textup{H}} G(V^T\hat{\alpha}_N^{\textup{c}})(\cdot) =\frac{1}{N}\sum_{i=1}^N\sum_{j=1}^J \frac{R^{(j)}_i\rho^{(j)}(V_i)}{\pi^{(j)}(V_i)}G(V_i^T\hat{\alpha}_N^{\textup{c}})\delta_{(X_i,V_i)},
\end{equation*}
and the calibrated H-empirical process  is defined as $\mathbb{G}_N^{\textup{H,c}} \equiv \sqrt{N}(\mathbb{P}_N^{\textup{H,c}}-\tilde{P}_0)$.
Other variants in \cite{Ranalli2015} can be extended by changing the range of summation. For example, if we replace $V$ by a vector with elements $VI_{\mathcal{V}^{(j)}}(V),j=1,\ldots,J,$ in (\ref{eqn:caleqn}), we obtain  {\it data-source-specific calibration} 
\begin{equation*}
\frac{1}{N}\sum_{i:V_i\in\mathcal{V}^{(j)}} \sum_{j=1}^J\frac{R_i^{(j)}\rho^{(j)}(V_i)}{\pi^{(j)}(V_i)}G(V_i^T\alpha^{(j)})V_i
=\frac{1}{N}\sum_{i:V_i\in\mathcal{V}^{(j)}}V_i, \quad j=1,\ldots,J.
\end{equation*}
The left-hand is computed from all selected units that belong to source $j$.

Our proposed method exploits the asymptotic variance formula
$\upsilon(f,f)$ in Theorem \ref{thm:D}.
We target the reduction of design variances in
\begin{equation*}
\textup{Var}_{0}^{(j)}\{\rho^{(j)}(V)f(X)\}=P_0^{(j)}\{\rho^{(j)}(V)f(X)-P_0^{(j)}\rho^{(j)}(V)f(X)\}^{\otimes
  2}.
\end{equation*} 
The key observations are (1) the conditional variance is obtained from the sample from the same source (units with $R^{(j)}=1$), and (2) variables of interest are $\rho^{(j)}(V)f(X)-P_0^{(j)}\rho^{(j)}(V)f(X)$. 
Our method is thus characterized by the following three points:
(1) calibration is carried out within a subsample from the same source,
(2) variables used are
$\rho^{(j)}(V)V$ with centering, and (3) Horvitz-Thompson estimators
are equated with sample averages.
To be specific, we propose  the {\it sample-specific calibration equation} 
\begin{equation}
\label{eqn:scaleqn}
\frac{1}{N^{(j)}}\sum_{i:V_i\in\mathcal{V}^{(j)}} \frac{R_i^{(j)}G^{(j)}_{\alpha^{(j)}}(V_i)}{\pi^{(j)}(V_i)}\left\{\rho^{(j)}(V_i)V_i-\mathbb{P}_{N^{(j)}}^{(j)}\rho^{(j)}(V)V\right\}
=0,
\end{equation}
$j=1,\ldots, J$ with solution $\hat{\alpha}_N^{\textup{sc}}=(\hat{\alpha}_N^{\textup{sc},(1)},\ldots,\hat{\alpha}_N^{\textup{sc},(J)})^T$ where 
\begin{equation*}
G^{(j)}_\alpha(v) \equiv G\left[\left\{\rho^{(j)}(v)v-\mathbb{P}_{N^{(j)}}^{(j)}\rho^{(j)}(V)V\right\}^T\alpha^{(j)}\right].
\end{equation*}
The right-hand side of (\ref{eqn:scaleqn}) is the average of
empirically centered variables, and hence equals zero.
The left-hand side is computed from selected items from source $j$ in
contrast to data-source-specific calibration that uses all items
sampled from both source $j$ and its overlapping sources.
We define the {\sl H-empirical measure with sample-specific calibration} by
\begin{eqnarray*}
\mathbb{P}_{N}^{\textup{H,sc}}
&\equiv& 
\frac{1}{N}\sum_{i=1}^N\sum_{j=1}^J\frac{R_i^{(j)}\rho^{(j)}(V_i)}{\pi^{(j)}(V_i)}G^{(j)}_{\hat{\alpha}^{\textup{sc},(j)}_N}(V_i)\delta_{(X_i,V_i)},
\end{eqnarray*}
and the corresponding  H-empirical process by $\mathbb{G}_{N}^{\textup{H,sc}}\equiv\sqrt{N}(\mathbb{P}_{N}^{\textup{H,sc}}-\tilde{P}_0)$.

We assume the following condition for calibration methods.
\begin{cond}
\label{cond:cal}
\noindent 
(a) $\hat{\alpha}_N^{\textup{c}}$ and $\hat{\alpha}_N^{\textup{sc}}$
are solutions of (\ref{eqn:caleqn}) and (\ref{eqn:scaleqn}). 

\noindent (b)  $V\in \mathbb{R}^k$ has bounded support with $\mathcal{V}^{(j)}\neq \{0\},j=1,\ldots,J$.  

\noindent(c)  $G$ is a strictly increasing, continuously differentiable, bounded function on 
$\mathbb{R}$ such that $G(0)=1$. Its derivative $\dot{G}$ is strictly positive and bounded.

\noindent(d) $\tilde{P}_0V^{\otimes 2}$ and every
$\textup{Var}_{0}^{(j)}\{\rho^{(j)}(V)V\}$ satisfying $P^{(j)}_0\rho^{(j)}(V)>0$ are finite and positive definite. 

\end{cond}
Condition \ref{cond:cal} (a) ensures the existence of solutions to
calibration equations. Under conditions (b)-(d), probability of their
existence with the choice $G(x)=1+x$ tends to 1 as
$N\rightarrow\infty$. When $V$ is bounded, $G(x)=1+x$ can be
considered as a bounded function that satisfies (c). In this case,
\begin{eqnarray*}
  \hat{\alpha}^{\textup{sc}(j)}_N  =\left\{\frac{1}{N^{(j)}}\sum_{i:V_i\in\mathcal{V}^{(j)}}\frac{R_i^{(j)}}{\pi^{(j)}(V_i)}\left(V^{\rho^{(j)}}_i\right)^{\otimes 2}\right\}^{-1} \frac{1}{N^{(j)}}\sum_{i:V_i\in\mathcal{V}^{(j)}}\frac{R_i^{(j)}}{\pi^{(j)}(V_i)}V^{\rho^{(j)}}_i
\end{eqnarray*} 
where $V_i^{\rho^{(j)}}\equiv \rho^{(j)}(V_i)V_i-\mathbb{P}_{N^{(j)}}^{(j)}\rho^{(j)}(V)V$.
The probability of the existence of the matrix inverse above tends to
1 due to (d). A similar argument applies to $\hat{\alpha}_N^{\textup{c}}$. 
Note that the choice of $G$ does not affect the limiting processes in
the uniform CLT for calibrated H-empirical processes below. 
\begin{thm}
\label{thm:calD}
Suppose $\mathcal{F}$ is $P_0$-Donsker with $\lVert P_0\rVert_{\mathcal{F}}<\infty$. 
Under Condition \ref{cond:cal},
\begin{eqnarray*}
 &&\mathbb{G}_N^{\textup{H,c}}(\cdot) \rightsquigarrow \mathbb{G}^{\textup{H,c}}(\cdot) \equiv \mathbb{G}(\cdot) + \sum_{j=1}^J \sqrt{\nu^{(j)}} \sqrt{\frac{1-p^{(j)}}{p^{(j)}}}\mathbb{G}^{(j)}(\rho^{(j)}I-Q_{\textup{c}}^{(j)})(\cdot),\\
&&\mathbb{G}_N^{\textup{H,sc}}(\cdot) \rightsquigarrow \mathbb{G}^{\textup{H,sc}}(\cdot) \equiv \mathbb{G}(\cdot) + \sum_{j=1}^J \sqrt{\nu^{(j)}} \sqrt{\frac{1-p^{(j)}}{p^{(j)}}}\mathbb{G}^{(j)}(\rho^{(j)}I-Q_{\textup{sc}}^{(j)})(\cdot),
\end{eqnarray*}
in $\ell^{\infty}(\mathcal{F})$.
Here $\mathbb{G}$ and $\mathbb{G}^{(j)}$ are the same as in Theorem \ref{thm:D}, 
$I$ is the identity map, and  $Q_{\textup{c}}^{(j)}$ and $Q_{\textup{sc}}^{(j)}$ are maps from the class of functions on $\mathcal{X}$ to the class of linear maps on $\mathcal{V}$ defined by
\begin{eqnarray*}
&&Q_{\textup{c}}^{(j)} (f)[v] = \tilde{P}_0(f(X)V^T)\{\tilde{P}_0V^{\otimes 2}\}^{-1}\rho^{(j)}(v)v,\\
&&Q_{\textup{sc}}^{(j)}(f)[v]=P_0^{(j)}\{\rho^{(j)}(V)f(X)(\rho^{(j)}(V)V-P_0^{(j)}\rho^{(j)}(V)V)^T\} \\
&& \qquad\times\left\{\textup{Var}_{0}^{(j)}\left(\rho^{(j)}(V)V\right)\right\}^{-1}\left\{\rho^{(j)}(v)
   v-P_0^{(j)}\rho^{(j)}(V)V\right\}I\{v\in \mathcal{V}^{(j)}\}.
\end{eqnarray*}
Covariance functions $\upsilon^{\#}(\cdot,\cdot) =
\textup{Cov}(\mathbb{G}^{\textup{H},\#}
\cdot,\mathbb{G}^{\textup{H},\#} \cdot)$ on $\mathcal{F}\times
\mathcal{F},\#\in\{\textup{c,sc}\}$ are 
\begin{eqnarray*}
\upsilon^{\#}(f,g)
=\textup{Cov}_0(f,g)+\sum_{j=1}^J \nu^{(j)}\frac{1-p^{(j)}}{p^{(j)}}\textup{Cov}_0^{(j)}\left(\rho^{(j)}f-Q_{\#}^{(j)}(f),\rho^{(j)}g-Q_{\#}^{(j)}(g)\right).
\end{eqnarray*}
\end{thm}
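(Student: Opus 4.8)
The plan is to reduce the uniform CLT for the calibrated H-empirical processes to an application of Theorem \ref{thm:D} via a functional delta-method argument, treating the calibration weights $G(V^T\hat\alpha_N^{\textup{c}})$ and $G^{(j)}_{\hat\alpha^{\textup{sc},(j)}_N}(V)$ as random perturbations of the constant weight $1=G(0)$. First I would establish consistency and a $\sqrt N$-rate for the calibration solutions: using Condition \ref{cond:cal} together with Theorem \ref{thm:gc} applied to the (Glivenko--Cantelli) classes generated by products of $V$-coordinates and $\rho^{(j)}$, the calibration estimating equations \eqref{eqn:caleqn} and \eqref{eqn:scaleqn} converge uniformly to deterministic population equations whose unique solution is $\alpha=0$ (this is where $G(0)=1$, strict monotonicity, and positive definiteness of $\tilde P_0 V^{\otimes 2}$ and $\textup{Var}_0^{(j)}(\rho^{(j)}V)$ enter), so $\hat\alpha_N^{\textup{c}}\to_{P^*}0$ and likewise $\hat\alpha_N^{\textup{sc}}\to_{P^*}0$. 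Then a standard Z-estimator linearization — differentiate the estimating equation, invert the derivative matrix (nonsingular by Condition \ref{cond:cal}(d) and $\dot G>0$), and plug in the H-empirical-process CLT for the ``score'' part — yields $\sqrt N\,\hat\alpha_N^{\textup{c}}=O_{P^*}(1)$ with an asymptotically linear representation in terms of $\mathbb G_N^{\textup{H}}$ evaluated at fixed functions of $V$, and similarly for each $\hat\alpha_N^{\textup{sc},(j)}$.

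Next I would expand the calibrated process around $\hat\alpha=0$. Writing $\mathbb P_N^{\textup{H,c}}f=\mathbb P_N^{\textup{H}}\,G(V^T\hat\alpha_N^{\textup{c}})f$, a one-term Taylor expansion of $G$ about $0$ gives
\begin{equation*}
\mathbb G_N^{\textup{H,c}}f=\mathbb G_N^{\textup{H}}f+\dot G(0)\,\bigl(\mathbb P_N^{\textup{H}}(V^T f)\bigr)\sqrt N\,\hat\alpha_N^{\textup{c}}+\textup{(remainder)},
\end{equation*}
uniformly in $f\in\mathcal F$, where the remainder is controlled by $\|\hat\alpha_N^{\textup{c}}\|^2\sqrt N$ times a term bounded via the boundedness of $\ddot G$ (implicit in Condition \ref{cond:cal}(c)) and the envelope/Donsker integrability, hence $o_{P^*}(1)$ uniformly. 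Substituting the linear expansion of $\sqrt N\hat\alpha_N^{\textup{c}}$ and using $\mathbb P_N^{\textup{H}}(V^Tf)\to_{P^*}\tilde P_0(V^Tf)$ (Theorem \ref{thm:gc} again) turns the calibrated process into $\mathbb G_N^{\textup{H}}$ applied to the \emph{residual} function $f-\tilde P_0(f V^T)\{\tilde P_0V^{\otimes2}\}^{-1}V$; the crucial algebraic point is that the projection term $Q_{\textup{c}}^{(j)}(f)$ in the statement is exactly the $j$-th coordinate of this correction because of the identity $\sum_j\rho^{(j)}\equiv 1$, so the corrected index class is again $P_0$-Donsker (Lipschitz image of a Donsker class) and Theorem \ref{thm:D} applies directly, with the covariance bookkeeping yielding $\upsilon^{\textup{c}}$. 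The sample-specific case is handled in the same way but source by source: the centering $\rho^{(j)}(V)V-\mathbb P^{(j)}_{N^{(j)}}\rho^{(j)}(V)V$ is replaced by its population version $\rho^{(j)}(V)V-P_0^{(j)}\rho^{(j)}(V)V$ at the cost of an $o_{P^*}(1)$ term (by another Glivenko--Cantelli argument), and the restriction $I\{v\in\mathcal V^{(j)}\}$ appears because the $j$-th calibration only touches units with $R^{(j)}=1$; the resulting residual again indexes a Donsker class and Theorem \ref{thm:D} gives $\mathbb G^{\textup{H,sc}}$ and $\upsilon^{\textup{sc}}$.

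Two technical points deserve care. The choice $G(x)=1+x$ is not bounded on $\mathbb R$, but on the bounded support of $V$ (Condition \ref{cond:cal}(b)) it agrees with a bounded $C^1$ function satisfying (c), so the boundedness hypotheses can be invoked after truncation without changing the solution — I would state this reduction explicitly, as the paper's remark after Condition \ref{cond:cal} already flags it. The condition $\|P_0\|_{\mathcal F}<\infty$ (not needed in Theorem \ref{thm:D}) is used here precisely to control the product class $\{V^Tf:f\in\mathcal F\}$ and the remainder: boundedness of $P_0f$ together with boundedness of $V$ keeps the perturbation terms uniformly integrable. The main obstacle I anticipate is \emph{not} any single estimate but the bookkeeping that shows the perturbation decomposes along the same $J+1$ uncorrelated pieces as in Theorem \ref{thm:D} — i.e., verifying that the linearized calibration correction, which a priori mixes all sources through $\mathbb P_N^{\textup{H}}$, in fact splits so that the $j$-th Brownian bridge $\mathbb G^{(j)}$ picks up exactly $\rho^{(j)}I-Q_\#^{(j)}$ and nothing from the other sources; this hinges on the exact form of $Q_\#^{(j)}$ and on $\sum_j\rho^{(j)}\equiv1$, and getting the conditional-covariance cancellations right (via the total law of covariance, as in the $\mathbb G_N^{\textup{H}}$ decomposition) is the delicate part. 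Once that structural identity is in hand, the convergence itself is immediate from Theorem \ref{thm:D} and the continuous mapping / Slutsky argument.
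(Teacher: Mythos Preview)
Your overall architecture---consistency and $\sqrt N$-rate for $\hat\alpha$, Taylor-expand the weights, then apply Theorem \ref{thm:D}---matches the paper's. But there is a genuine gap in the linearization step that leads you to the wrong limiting process.

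You claim that $\sqrt N\,\hat\alpha_N^{\textup{c}}$ has an asymptotically linear representation ``in terms of $\mathbb G_N^{\textup{H}}$'' and that consequently $\mathbb G_N^{\textup{H,c}}f$ is, up to $o_{P^*}(1)$, $\mathbb G_N^{\textup{H}}$ applied to the residual $f-g$ with $g(V)=\tilde P_0(fV^T)\{\tilde P_0V^{\otimes2}\}^{-1}V$. This is false. The calibration equation \eqref{eqn:caleqn} has $\mathbb P_NV$, not $\tilde P_0V$, on the right-hand side. Hence the ``score'' at $\alpha=0$ in the $Z$-estimator expansion is
\[
\sqrt N\bigl(\Phi_{N,\textup c}(0)-\Phi_{\textup c}(0)\bigr)=\sqrt N(\mathbb P_N^{\textup H}-\mathbb P_N)V,
\]
the \emph{finite-population} process of Corollary \ref{cor:fpD}, not $\mathbb G_N^{\textup H}V=\sqrt N(\mathbb P_N^{\textup H}-\tilde P_0)V$. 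The i.i.d.\ piece $\mathbb G_NV$ is exactly cancelled by the $\mathbb P_NV$ on the right, so the limit of $\sqrt N\hat\alpha_N^{\textup{c}}$ involves only the $\mathbb G^{(j)}$'s and \emph{not} $\mathbb G$. This is precisely why the first term in the theorem remains $\mathbb G(f)$ and the correction $Q_{\textup c}^{(j)}$ appears only inside the $\mathbb G^{(j)}$-terms. Under your route, the limit would be $\mathbb G^{\textup H}(f-g)$, whose i.i.d.\ part is $\mathbb G(f)-\mathbb G(g)$; since $g(V)$ is generally non-degenerate, $\mathbb G(g)\neq0$ and you obtain the wrong covariance $\upsilon^{\textup c}$. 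The ``delicate bookkeeping'' you flag is therefore not a covariance cancellation via the law of total covariance---it is simply recognizing that the calibration score is $(\mathbb P_N^{\textup H}-\mathbb P_N)V$ and applying Corollary \ref{cor:fpD} (this is the content of Proposition \ref{prop:alphaCal}). The same remark applies verbatim to the sample-specific case: the right-hand side of \eqref{eqn:scaleqn} is the source-$j$ empirical average (which equals zero by centering), so again only the source-$j$ finite-sampling process enters the linearization of $\sqrt{N^{(j)}}\hat\alpha_N^{\textup{sc},(j)}$.

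A smaller point: you invoke boundedness of $\ddot G$ to control the Taylor remainder by $\|\hat\alpha\|^2\sqrt N$. Condition \ref{cond:cal}(c) only assumes $G\in C^1$ with bounded $\dot G$. The paper avoids a second-order expansion by writing $G(V^T\hat\alpha)-1=\dot G(V^T\alpha^*)V^T\hat\alpha$ for an intermediate $\alpha^*$, then using $\hat\alpha\to_{P^*}0$, the Glivenko--Cantelli property of $\{v\mapsto\dot G(v^T\alpha)v^{\otimes2}:\alpha\}$, and dominated convergence to replace $\dot G(V^T\alpha^*)$ by $\dot G(0)$.
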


To compare above methods, 
define  the class $\mathcal{C}$ of  estimators of $P_0f$ for arbitrary $f$ with $P_0f^{\otimes 2} <\infty$ whose asymptotic variance takes the form of
\begin{eqnarray*}
\textup{Var}_0\{f(X)\} + \sum_{j=1}^J\nu^{(j)}\frac{1-p^{(j)}}{p^{(j)}} \textup{Var}_{0}^{(j)}\left[\rho^{(j)}(V)f(X) - L^{(j)}_f\left\{\rho^{(j)}(V)V\right\}\right]
\end{eqnarray*}
where $L^{(j)}_f(v)$ is a linear function of $v$ that depends on $f$. 
Note that calibration and the sample-specific calibration have $L^{(j)}_f\{\rho^{(j)}(v)v\}=Q_{\textup{c}}^{(j)}(f)[v]$ and $L^{(j)}_f\{\rho^{(j)}(v)v\}=Q_{\textup{sc}}^{(j)}(f)[v]$. 
The optimal $L_f^{(j)}\{\rho^{(j)}(v)v\}$ is the orthogonal projection of $\rho^{(j)}(v)f(x)$ onto the linear span of  $\rho^{(j)}(v)v-P_0^{(j)}\{\rho^{(j)}(V)V\}$ with respect to the pseudo-metric $d^{(j)}(f,g) = \{\textup{Var}_{0}^{(j)}(f-g)\}^{1/2}$. 
This is exactly $L^{(j)}_f\{\rho^{(j)}(v)v\}=Q_{\textup{sc}}^{(j)}(f)[v]$. Thus, we obtain the following theorem.

\begin{thm}
Sample-specific calibration is optimal among $\mathcal{C}$ with improved asymptotic variance over a non-calibrated estimator:
\begin{equation*}
\textup{AV}(\mathbb{G}_N^{\textup{H,sc}}f ) = \textup{AV}(\mathbb{G}_N^{\textup{H}} f) - \sum_{j=1}^J\nu^{(j)} \frac{1-p^{(j)}}{p^{(j)}} \textup{Var}_{0}^{(j)}\left(Q_{\textup{sc}}^{(j)}f\right).
\end{equation*}
\end{thm}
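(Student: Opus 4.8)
The plan is to work entirely within the class $\mathcal{C}$ of estimators whose asymptotic variance has the displayed form, and to reduce the optimality claim to a Hilbert-space projection argument carried out separately in each source $j$. First I would fix $j$ and an arbitrary $f$ with $P_0f^{\otimes 2}<\infty$, and work in the $L^2(P_0^{(j)})$ space equipped with the pseudo-metric $d^{(j)}(f,g)=\{\textup{Var}_0^{(j)}(f-g)\}^{1/2}$ (passing to the quotient by constants, so that $d^{(j)}$ becomes a genuine metric). Inside this space, consider the finite-dimensional linear subspace $\mathcal{L}^{(j)}$ spanned by the coordinates of the centered vector $\rho^{(j)}(V)V-P_0^{(j)}\{\rho^{(j)}(V)V\}$; any linear functional $L_f^{(j)}\{\rho^{(j)}(v)v\}$ appearing in the definition of $\mathcal{C}$ is exactly an element of $\mathcal{L}^{(j)}$. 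The $j$-th summand of the asymptotic variance of an estimator in $\mathcal{C}$ is $\nu^{(j)}\,od(p^{(j)})$ times $d^{(j)}(\rho^{(j)}f,\;L_f^{(j)})^2$, so minimizing that summand over admissible $L_f^{(j)}$ is precisely minimizing the squared distance from $\rho^{(j)}(V)f(X)$ to the subspace $\mathcal{L}^{(j)}$. The minimizer is the orthogonal projection $\Pi^{(j)}$ of $\rho^{(j)}f$ onto $\mathcal{L}^{(j)}$, which by Condition \ref{cond:cal}(d) (positive-definiteness of $\textup{Var}_0^{(j)}\{\rho^{(j)}(V)V\}$, guaranteeing the Gram matrix is invertible on the relevant sources) has the explicit normal-equations form
\begin{equation*}
\Pi^{(j)}(\rho^{(j)}f)[v]=P_0^{(j)}\bigl\{\rho^{(j)}(V)f(X)\bigl(\rho^{(j)}(V)V-P_0^{(j)}\rho^{(j)}(V)V\bigr)^T\bigr\}\bigl\{\textup{Var}_0^{(j)}(\rho^{(j)}(V)V)\bigr\}^{-1}\bigl\{\rho^{(j)}(v)v-P_0^{(j)}\rho^{(j)}(V)V\bigr\}.
\end{equation*}
The next step is simply to recognize that this is literally $Q_{\textup{sc}}^{(j)}(f)[v]$ as defined in Theorem \ref{thm:calD} (the indicator $I\{v\in\mathcal{V}^{(j)}\}$ is automatic since $\rho^{(j)}$ vanishes off $\mathcal{V}^{(j)}$), so the sample-specific calibration attains the per-source minimum for every $j$ simultaneously, hence minimizes the sum and is optimal in $\mathcal{C}$.

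For the variance-reduction identity, I would use the Pythagorean decomposition in each $L^2(P_0^{(j)})$: writing $\rho^{(j)}f = \Pi^{(j)}(\rho^{(j)}f) + (\rho^{(j)}f - \Pi^{(j)}(\rho^{(j)}f))$ with the two pieces $d^{(j)}$-orthogonal, and noting $\Pi^{(j)}(\rho^{(j)}f)=Q_{\textup{sc}}^{(j)}(f)$ is itself in $\mathcal{L}^{(j)}$ so $\Pi^{(j)}(Q_{\textup{sc}}^{(j)}f)=Q_{\textup{sc}}^{(j)}f$, we get
$\textup{Var}_0^{(j)}(\rho^{(j)}f) = \textup{Var}_0^{(j)}(\rho^{(j)}f - Q_{\textup{sc}}^{(j)}f) + \textup{Var}_0^{(j)}(Q_{\textup{sc}}^{(j)}f)$.
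Multiplying by $\nu^{(j)}od(p^{(j)})$, summing over $j$, and adding the source-free term $\textup{Var}_0\{f(X)\}$ to both sides, the left side becomes $\textup{AV}(\mathbb{G}_N^{\textup{H}}f)$ by Theorem \ref{thm:D} and the first term on the right becomes $\textup{AV}(\mathbb{G}_N^{\textup{H,sc}}f)$ by the covariance formula $\upsilon^{\textup{sc}}$ in Theorem \ref{thm:calD}, yielding the displayed identity. In particular the subtracted sum is nonnegative, so sample-specific calibration never inflates the asymptotic variance relative to the non-calibrated estimator.

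The main obstacle, and the one point deserving genuine care rather than routine checking, is the claim that every estimator in $\mathcal{C}$ — in particular the ordinary calibration estimator with $L_f^{(j)}=Q_{\textup{c}}^{(j)}(f)$ — really does have its $j$-th correction term confined to the subspace $\mathcal{L}^{(j)}$ over which we are projecting, so that the comparison is a fair ``same feasible set'' optimization; one must check that $Q_{\textup{c}}^{(j)}(f)[v]$, which a priori is a linear functional of the \emph{uncentered} $\rho^{(j)}(v)v$ rather than of $\rho^{(j)}(v)v-P_0^{(j)}\{\rho^{(j)}(V)V\}$, still lands in the same affine/linear class once one accounts for the fact that $\textup{Var}_0^{(j)}$ is invariant under adding constants — i.e. that the relevant object for the variance is only the equivalence class of $Q_{\textup{c}}^{(j)}(f)$ modulo constants, which does lie in $\mathcal{L}^{(j)}$. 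Once that membership is verified the rest is the standard Hilbert-projection calculation; a secondary but minor technical point is handling sources with $P_0^{(j)}\rho^{(j)}(V)=0$, where the corresponding summand is identically zero and can simply be omitted from the projection argument, consistent with the hypotheses of Condition \ref{cond:cal}(d).
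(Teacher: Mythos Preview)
Your proposal is correct and follows essentially the same projection argument the paper sketches in the paragraph immediately preceding the theorem: minimizing each $j$-th design-variance term over linear $L_f^{(j)}$ amounts to orthogonally projecting $\rho^{(j)}f$ onto the span of $\rho^{(j)}V$ (modulo constants) in $L^2(P_0^{(j)})$ with respect to $d^{(j)}$, the projection is precisely $Q_{\textup{sc}}^{(j)}(f)$, and Pythagoras yields the displayed variance identity. Your added care about passing to the quotient by constants---so that linear functionals of the uncentered $\rho^{(j)}(V)V$ and of the centered version determine the same feasible set for the variance---is exactly the right way to reconcile the linear $L_f^{(j)}$ in the definition of $\mathcal{C}$ with the affine form of $Q_{\textup{sc}}^{(j)}$, a point the paper leaves implicit.
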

The performance of methods based on \cite{Ranalli2015} depends on specific situations. See our simulation in Section \ref{sec:6}. 

\section{Applications to Infinite-dimensional $M$-Estimation}
\label{sec:5}
An estimator 
in a statistical model is often characterized as a maximizer of a
 criterion function or a zero of estimating equations. 
The former estimator is called an $M$-estimator and the latter a $Z$-estimator. 
A canonical example for both cases is the maximum likelihood estimator (MLE) which maximizes likelihood and solves likelihood equations. 
In the i.i.d.\@ setting, empirical process theory plays a major role
in studying both estimators in a  general setting where parameters are
infinite-dimensional \cite{MR3012400,MR2202406,
MR1333176,MR1385671}.
In this section, we apply H-empirical process results to study
limiting properties of infinite-dimensional $M$- and $Z$-estimation
for data integration.

Suppose $\mathcal{P}$ is the collection of probability measures $P_\theta$ on $(\mathcal{X},\mathcal{A})$ parametrized by $\theta\in \Theta$ where $\Theta$ is a subset of a Banach space $(\mathcal{B},\lVert\cdot \rVert)$. 
The true distribution is $P_0 = P_{\theta_0}\in \mathcal{P}$.
Let $\mathcal{M}=\{m_\theta:\theta\in\Theta\}$ be a set of criterion functions on $\mathcal{X}$.
In the i.i.d.\@ setting, the $M$-estimator is defined as 
\begin{equation*}
\hat{\theta} = \argmax_{\theta\in\Theta}\mathbb{P}_N m_{\theta}(X).
\end{equation*}
Our proposed $M$-estimator $\hat{\theta}_N$ replaces the empirical measure by the H-empirical measure:
\begin{equation*}
\hat{\theta}_N = \argmax_{\theta\in\Theta}\mathbb{P}_N^{\textup{H}} m_{\theta}(X)
\end{equation*}
In the following, we establish consistency and rates of convergence of our $M$-estimator, while we consider $Z$-estimation for asymptotic normality.
Treating two estimators interchangeably can be justified because the
$M$-estimator often (nearly) solves estimating equations obtained from
the criterion function.
This relationship 
must be verified in each specific model.

\subsection{Consistency}
\label{sbsec:5.1}
The following theorem concerns consistency of our proposed $M$-estimator.
The key assumption is the Glivenko-Cantelli property of $\mathcal{M}$ by which our U-LLN applies.
\begin{thm}
\label{thm:const}
Suppose that $\mathcal{M}$ is $P_0$-Glivenko-Cantelli, and that for every $\epsilon>0$,
$
P_0m_{\theta_0} > \sup_{\theta:\lVert \theta-\theta_0\rVert>\epsilon} P_0m_{\theta}.
$
Then 
\begin{equation*}
\lVert \hat{\theta}_N-\theta_0\rVert \rightarrow_{P^*}0.
\end{equation*}
\end{thm}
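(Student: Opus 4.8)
The plan is to follow the classical argmax/M-estimation consistency argument (in the spirit of Theorem 5.7 of van der Vaart \& Wellner, or Theorem 5.2.3 of their book), but with the empirical measure $\mathbb{P}_N$ replaced by the H-empirical measure $\mathbb{P}_N^{\textup{H}}$, where the two required inputs---a uniform law of large numbers and a well-separated maximum---are supplied respectively by Theorem~\ref{thm:gc} and by the identifiability hypothesis assumed in the statement. Concretely, I would argue as follows. By definition of $\hat\theta_N$ as the maximizer of $\theta\mapsto\mathbb{P}_N^{\textup{H}}m_\theta$, we have $\mathbb{P}_N^{\textup{H}}m_{\hat\theta_N}\ge \mathbb{P}_N^{\textup{H}}m_{\theta_0}$. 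Adding and subtracting population quantities,
\begin{equation*}
P_0 m_{\hat\theta_N}\ \ge\ \mathbb{P}_N^{\textup{H}}m_{\hat\theta_N} - \lVert \mathbb{P}_N^{\textup{H}}-\tilde P_0\rVert_{\mathcal{M}}
\ \ge\ \mathbb{P}_N^{\textup{H}}m_{\theta_0} - \lVert \mathbb{P}_N^{\textup{H}}-\tilde P_0\rVert_{\mathcal{M}}
\ \ge\ P_0 m_{\theta_0} - 2\lVert \mathbb{P}_N^{\textup{H}}-\tilde P_0\rVert_{\mathcal{M}},
\end{equation*}
where I use that $\tilde P_0 m_\theta = P_0 m_\theta$ because $m_\theta$ depends only on $X$ and the marginal of $X$ under $\tilde P_0$ is $P_0$. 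Theorem~\ref{thm:gc} gives $\lVert \mathbb{P}_N^{\textup{H}}-\tilde P_0\rVert_{\mathcal{M}}\rightarrow_{P^*}0$, so $P_0 m_{\hat\theta_N}\ge P_0 m_{\theta_0} - o_{P^*}(1)$.

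Next I would convert this into convergence of $\hat\theta_N$ by contraposition against the separation condition. Fix $\epsilon>0$ and set $\eta(\epsilon) \equiv P_0 m_{\theta_0} - \sup_{\theta:\lVert\theta-\theta_0\rVert>\epsilon}P_0 m_\theta$, which is strictly positive by hypothesis. On the event $\{\lVert\hat\theta_N-\theta_0\rVert>\epsilon\}$ we have $P_0 m_{\hat\theta_N}\le P_0 m_{\theta_0}-\eta(\epsilon)$, which together with the previous display forces $2\lVert \mathbb{P}_N^{\textup{H}}-\tilde P_0\rVert_{\mathcal{M}}\ge \eta(\epsilon)$. Hence
\begin{equation*}
P^*\bigl(\lVert\hat\theta_N-\theta_0\rVert>\epsilon\bigr)\ \le\ P^*\bigl(2\lVert \mathbb{P}_N^{\textup{H}}-\tilde P_0\rVert_{\mathcal{M}}\ge\eta(\epsilon)\bigr)\ \longrightarrow\ 0,
\end{equation*}
again by Theorem~\ref{thm:gc}. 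Since $\epsilon>0$ is arbitrary, $\lVert\hat\theta_N-\theta_0\rVert\rightarrow_{P^*}0$.

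A couple of technical points deserve care rather than being the substance of the argument. The maximizer $\hat\theta_N$ should be interpreted as an (approximate) argmax that exists with probability tending to one, or the statement should be read up to an $o_{P^*}(1)$ slack $\mathbb{P}_N^{\textup{H}}m_{\hat\theta_N}\ge \sup_\theta \mathbb{P}_N^{\textup{H}}m_\theta - o_{P^*}(1)$; the displayed chain of inequalities absorbs such a slack with no change. Measurability of the suprema is handled by the outer-probability conventions $P^*,E^*$ already adopted in the paper, exactly as in the i.i.d.\@ theory. The main (and essentially only) obstacle is not in this proof at all but in its hypothesis: one must still have the Glivenko--Cantelli property of $\mathcal{M}$ and the well-separatedness of $\theta_0$, and---crucially---the latter must be stated in the Banach-space norm $\lVert\cdot\rVert$, which in infinite-dimensional models is the delicate condition to verify in applications. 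Given these two inputs, the reduction to Theorem~\ref{thm:gc} is immediate, which is precisely the point: the H-empirical process machinery makes infinite-dimensional $M$-estimation for merged data no harder than in the i.i.d.\@ case.
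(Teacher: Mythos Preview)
Your proof is correct and is essentially the same as the paper's: the paper's entire proof is the one-line citation ``This follows from Corollary 3.2.3 of \cite{MR1385671},'' and what you have written is precisely the standard argument behind that corollary, specialized to $\mathbb{P}_N^{\textup{H}}$ with Theorem~\ref{thm:gc} supplying the uniform convergence. Your observation that $\tilde P_0 m_\theta=P_0 m_\theta$ (since $m_\theta$ depends only on $X$) is the one point the paper leaves implicit.
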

In certain semiparametric models MLEs do not exist and nonparametric MLEs are considered as alternatives. 
In this case, the parameter space
for optimization may not be the same as the original space, 
 and consistency must be carefully proved based on properties of a specific model.
Our U-LLN continues to be helpfull for this purpose (see Example \ref{ex:coxr}).

\subsection{Rate of Convergence}
\label{sbsec:5.2}
A rate of convergence appears in Condition \ref{cond:wlenonreg1} for one of our
$Z$-theorems, namely, Theorem \ref{thm:zthm2}.
In the i.i.d.\@ case, convergence rates are often obtained by the peeling device (\cite{MR822047}, see also Theorem 3.2.5 of \cite{MR1385671}) together with maximal inequalities for the empirical process.
Instead of obtaining maximal inequalities of H-empirical processes for different $\mathcal{M}$ each time, we directly compare maximal inequalities for the empirical and H-empirical processes to obtain the following theorem. 
This theorem ensures the same rate of convergence both in the i.i.d.\@ setting and our setting.
Below, we denote $ a \lesssim b$ to mean $a \le K b$ for some constant $K \in (0,\infty)$.

\begin{thm}
\label{thm:rate}
Suppose for every $\theta$ in a neighborhood of $\theta_0$,
\begin{equation}
\label{eqn:ratecond1}
P_0(m_\theta-m_{\theta_0})\lesssim 
-\lVert \theta-\theta_0\rVert^2.
\end{equation}
For every $N$ and sufficiently small $\delta>0$, it holds that
\begin{equation}
\label{eqn:mxineq}
E^*\sup_{\lVert \theta-\theta_0\rVert<\delta} \left|\mathbb{G}_N(m_\theta-m_{\theta_0})\right|
\lesssim \phi_N(\delta)
\end{equation}
for functions $\phi_N$ such that $\delta\mapsto \phi_N(\delta)/\delta^\alpha$ 
is decreasing for some $\alpha<2$ (not depending on $N$).
If $\hat{\theta}_N\rightarrow_{P^*}\theta_0$ and
$\mathbb{P}_N^{\textup{H}} m_{\hat{\theta}_N}\geq \mathbb{P}_N^{\textup{H}} m_{\theta_0}-O_{P^*}(r^{-2}_N)$,
then $r_N\lVert \hat{\theta}_N-\theta_0\rVert=O_{P^*}(1)$ 
for every $r_N$ such that $r_N^2\phi_N(1/r_N)\leq \sqrt{N}$
for every $N$.
\end{thm}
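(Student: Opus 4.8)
The plan is to run the peeling device of Theorem 3.2.5 of \cite{MR1385671}, with the ordinary empirical process $\mathbb{G}_N$ replaced throughout by the H-empirical process $\mathbb{G}_N^{\textup{H}}$. The only genuinely new ingredient is to show that the modulus-of-continuity hypothesis (\ref{eqn:mxineq}) for $\mathbb{G}_N$ transfers to $\mathbb{G}_N^{\textup{H}}$ with the same rate function $\phi_N$ up to a multiplicative constant; once that is in hand, the rate follows by the classical shelling argument.

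For the transfer, I would invoke the decomposition
\begin{equation*}
\mathbb{G}_N^{\textup{H}}(m_\theta-m_{\theta_0}) = \mathbb{G}_N(m_\theta-m_{\theta_0}) + \sum_{j=1}^J \sqrt{N}\,(\mathbb{P}_N^{\textup{H},(j)}-\mathbb{P}_N)\rho^{(j)}(m_\theta-m_{\theta_0})
\end{equation*}
from Section \ref{sec:3} and bound $E^*\sup_{\lVert\theta-\theta_0\rVert<\delta}|\cdot|$ term by term. The first summand is controlled directly by (\ref{eqn:mxineq}). For the $j$-th source summand, rewrite it as $(N^{(j)}/N)\sqrt{N}(\hat{\mathbb{P}}^{(j)}_{n^{(j)}}-\mathbb{P}^{(j)}_{N^{(j)}})\rho^{(j)}(m_\theta-m_{\theta_0})$ as in (\ref{eqn:hempj}); conditionally on the first-stage data this is an exchangeably weighted sampling-without-replacement bootstrap empirical process indexed by the local class $\mathcal{M}^{(j)}_\delta=\{\rho^{(j)}(m_\theta-m_{\theta_0}):\lVert\theta-\theta_0\rVert<\delta\}$. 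Using the exchangeable-bootstrap maximal inequalities of \cite{MR1245301} in the unconditional form discussed in Section \ref{sec:3}, together with the fact that multiplication by the fixed bounded function $\rho^{(j)}$ changes covering numbers and envelopes of $\mathcal{M}$ only by constants, I would obtain
\begin{equation*}
E^*\sup_{\lVert\theta-\theta_0\rVert<\delta}\bigl|\sqrt{N}(\mathbb{P}_N^{\textup{H},(j)}-\mathbb{P}_N)\rho^{(j)}(m_\theta-m_{\theta_0})\bigr| \lesssim E^*\sup_{\lVert\theta-\theta_0\rVert<\delta}\bigl|\mathbb{G}_N(m_\theta-m_{\theta_0})\bigr| \lesssim \phi_N(\delta).
\end{equation*}
Summing the $J+1$ bounds gives $E^*\sup_{\lVert\theta-\theta_0\rVert<\delta}|\mathbb{G}_N^{\textup{H}}(m_\theta-m_{\theta_0})|\lesssim\phi_N(\delta)$, and the monotonicity of $\delta\mapsto\phi_N(\delta)/\delta^\alpha$ is inherited since it is a property of $\phi_N$ alone.

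With this H-modulus bound the remainder is routine. Partition the set $\{\theta:r_N\lVert\theta-\theta_0\rVert>2^M\}$, intersected with a fixed neighborhood of $\theta_0$ on which (\ref{eqn:ratecond1}) holds, into shells $S_{j,N}=\{\theta:2^{j-1}<r_N\lVert\theta-\theta_0\rVert\le 2^j\}$, $j\ge M$. On $\{\hat\theta_N\in S_{j,N}\}$, writing $\mathbb{P}_N^{\textup{H}}(m_{\hat\theta_N}-m_{\theta_0})=N^{-1/2}\mathbb{G}_N^{\textup{H}}(m_{\hat\theta_N}-m_{\theta_0})+\tilde P_0(m_{\hat\theta_N}-m_{\theta_0})$ and combining the near-maximizing property with (\ref{eqn:ratecond1}) forces $\sup_{\theta\in S_{j,N}}|\mathbb{G}_N^{\textup{H}}(m_\theta-m_{\theta_0})|\gtrsim\sqrt{N}\,2^{2j}/r_N^{2}$, once $M$ is large enough that the curvature term absorbs the $O_{P^*}(r_N^{-2})$ slack. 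Markov's inequality, the H-modulus bound, the inequality $\phi_N(2^j/r_N)\le 2^{j\alpha}\phi_N(1/r_N)$ for $j\ge 0$, and the assumption $r_N^2\phi_N(1/r_N)\le\sqrt N$ then yield $P^*(\hat\theta_N\in S_{j,N})\lesssim 2^{j(\alpha-2)}$; summing over $j\ge M$ (and using $\hat\theta_N\rightarrow_{P^*}\theta_0$ to discard the part outside the neighborhood) gives $P^*(r_N\lVert\hat\theta_N-\theta_0\rVert>2^M)\to 0$ as $M\to\infty$, i.e.\ $r_N\lVert\hat\theta_N-\theta_0\rVert=O_{P^*}(1)$.

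The main obstacle is precisely the maximal-inequality transfer to the source-$j$ bootstrap pieces: one must check that the exchangeably weighted bootstrap empirical process generated by sampling without replacement with ratio $n^{(j)}/N^{(j)}\to p^{(j)}\in(0,1]$ (rather than the classical $m/n\to 0$ regime) admits a modulus-of-continuity bound governed by that of the parent i.i.d.\ process, and that this holds in the unconditional $E^*$ sense required here — which drags in the measurability bookkeeping (outer expectations, Fubini across the two-stage randomization) already set up in Section \ref{sec:3} and the Appendix. Once that lemma is available, the shelling step is entirely standard.
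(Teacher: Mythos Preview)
Your proposal is correct and follows essentially the same approach as the paper: transfer the modulus-of-continuity bound from $\mathbb{G}_N$ to $\mathbb{G}_N^{\textup{H}}$ via the decomposition (\ref{eqn:decompmult}) and exchangeable-bootstrap inequalities, then invoke Theorem 3.2.5 of \cite{MR1385671}. The paper packages the transfer you identify as the ``main obstacle'' into a standalone lemma (Lemma \ref{lemma:gpleqg}), stating $E^*\lVert \mathbb{G}_N^{\textup{H}}\rVert_{\mathcal{F}}\lesssim E^*\lVert \mathbb{G}_N\rVert_{\mathcal{F}}$ for arbitrary $\mathcal{F}$, and proves it by reducing to Lemma A.2 of \cite{SW2013supp} for the stratified case together with a Jensen-inequality step to pass from $\tilde{\mathcal{F}}_j$ to $\mathcal{F}$; with that lemma in hand the proof of Theorem \ref{thm:rate} is two lines.
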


\subsection{Infinite-dimensional $Z$-theorem}
We consider asymptotic distributions of our $Z$-estimators by  extending two infinite-dimensional $Z$-theorems (Theorem 3.3.1 of \cite{MR1385671} and Theorem 6.1 of \cite{MR1394975}) in the i.i.d.\@ setting to our setting.
The first theorem concerns estimators with regular parametric rate of convergence.
The second theorem specializes in semiparametric models with non-regular rate of convergence for nuisance parameters.  
The estimators are obtained by replacing $\mathbb{P}_N$ by $\mathbb{P}_N^{\textup{H}}$ in estimating equations. 
We also consider calibration methods in the previous section in these theorems.

\subsubsection{Parametric rate of convergence for nuisance parameters}
\label{sbsec:5.3.1}
Let $\hat{\theta}_{N}$ and $\hat{\theta}_{N,\#}$ be estimators of $\theta$ obtained as solutions to the estimating equations given by
\begin{eqnarray*}
&&\left\lVert\Psi^{\textup{H}}_{N}(\theta)\right\rVert_{\mathcal{H}}\equiv \left\lVert \mathbb{P}_N^{\textup{H}} B_\theta\right\rVert_{\mathcal{H}} = o_{P^*}(N^{-1/2}),\\
&&\left\lVert\Psi^{\textup{H}}_{N,\#}(\theta)\right\rVert_{\mathcal{H}}\equiv \left\lVert \mathbb{P}_N^{\textup{H},\#} B_\theta\right\rVert_{\mathcal{H}} = o_{P^*}(N^{-1/2}),\quad \#\in \{\textup{c,sc}\},
\end{eqnarray*}
respectively where $B_\theta$ is a map from some set $\mathcal{H}$ to $L_2(P_\theta)$ indexed by $\theta$.
Recall, for example, $\left\lVert \mathbb{P}_N^{\textup{H}} B_\theta\right\rVert_{\mathcal{H}} =\sup_{h\in\mathcal{H}}|\mathbb{P}_N^{\textup{H}} B_\theta(h)|$ (see also Example \ref{ex:para}).
Let $\Psi(\theta)\equiv P_0B_\theta$ and $\Psi_N(\theta)\equiv \mathbb{P}_NB_\theta$ be maps from $\Theta$ to $\ell^\infty(\mathcal{H})$.
We assume:

\begin{cond}
\label{cond:zthm1-1}
For the true parameter $\theta_0\in\Theta$, $\Psi(\theta_0)=0$. The set $\{B_{\theta_0}(h):h\in\mathcal{H}\}$ is $P_0$-Donsker and $\{(B_\theta-B_{\theta_0})(h):\theta\in\Theta,h\in\mathcal{H}\}$ is $P_0$-Glivenko-Cantelli with an integrable envelope.
\end{cond}
\begin{cond}
\label{cond:zthm1-2}
Suppose that $\Psi$ is Fr\'{e}chet differentiable at $\theta_0$;
\begin{equation*}
\left\lVert \Psi(\theta)-\Psi(\theta_0)-\dot{\Psi}_0(\theta-\theta_0)\right\rVert_{\mathcal{H}}=o\left(\lVert \theta-\theta_0\rVert\right).
\end{equation*}
Moreover, $\dot{\Psi}_0$ is continuously invertible at $\theta_0$ with inverse denoted as $\dot{\Psi}_0^{-1}$
\end{cond}
\begin{cond}
\label{cond:zthm1-3}
For any $\delta_N\downarrow 0$, the following stochastic equicontinuity condition holds at $\theta_0$;
\begin{equation*}
\sup_{\lVert\theta-\theta_0\rVert\leq \delta_N}\left\lVert \frac{\sqrt{N}(\Psi_N-\Psi)(\theta)-\sqrt{N}(\Psi_N-\Psi)(\theta_0)}{1+\sqrt{N}\lVert\theta-\theta_0\rVert}\right\rVert_{\mathcal{H}}  = o_{P^*}(1).
\end{equation*}
\end{cond}

Now we present the following infinite-dimensional $Z$-theorem. 
\begin{thm}
\label{thm:zthm1}
Suppose that Conditions \ref{cond:zthm1-1}-\ref{cond:zthm1-3} hold and that estimators $\hat{\theta}_N,\hat{\theta}_{N,\#},$ with $\#\in \{\textup{c,sc}\}$ are consistent for $\theta_0$.
Then 
\begin{eqnarray*}
&&\sqrt{N}(\hat{\theta}_{N}-\theta_0)
\rightsquigarrow -\dot{\Psi}_0^{-1}\mathbb{G}^{\textup{H}}B_{\theta_0},\\
&&\sqrt{N}(\hat{\theta}_{N,\#}-\theta_0)
\rightsquigarrow -\dot{\Psi}_0^{-1}\mathbb{G}^{\textup{H},\#}B_{\theta_0}.
\end{eqnarray*}
\end{thm}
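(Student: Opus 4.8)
The plan is to follow the standard delta-method / Z-estimator master theorem (Theorem 3.3.1 of \cite{MR1385671}), adapting it so that the role of the i.i.d.\ empirical process $\mathbb{G}_N$ is played by the H-empirical process $\mathbb{G}_N^{\textup{H}}$ (respectively $\mathbb{G}_N^{\textup{H},\#}$), and then invoking Theorems \ref{thm:D} and \ref{thm:calD} at the final step. First I would write the basic algebraic identity for a Z-estimator: since $\Psi_N^{\textup{H}}(\hat\theta_N) = o_{P^*}(N^{-1/2})$ and $\Psi(\theta_0) = 0$, decompose
\begin{equation*}
\sqrt{N}\bigl(\Psi_N^{\textup{H}}-\Psi\bigr)(\theta_0) + \sqrt{N}\,\Psi(\hat\theta_N) = -\sqrt{N}\bigl[(\Psi_N^{\textup{H}}-\Psi)(\hat\theta_N) - (\Psi_N^{\textup{H}}-\Psi)(\theta_0)\bigr] + o_{P^*}(1).
\end{equation*}

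The next step is to control the bracketed ``drift'' term on the right. I would split $\Psi_N^{\textup{H}} - \Psi = (\mathbb{P}_N^{\textup{H}} - \mathbb{P}_N)B_\theta + (\mathbb{P}_N - P_0)B_\theta$; the second piece is the usual i.i.d.\ empirical process and is handled by Condition \ref{cond:zthm1-3} exactly as in the classical proof. For the first piece I need an analogue of stochastic equicontinuity for the finite-population part $\sqrt{N}(\mathbb{P}_N^{\textup{H}} - \mathbb{P}_N)$. Here I would use the decomposition into $J$ source-wise pieces from Section \ref{sec:3} together with the Glivenko--Cantelli hypothesis on $\{(B_\theta - B_{\theta_0})(h)\}$ in Condition \ref{cond:zthm1-1}: by consistency of $\hat\theta_N$ and the bootstrap asymptotics underlying Theorem \ref{thm:D}, $\sqrt{N}(\mathbb{P}_N^{\textup{H}}-\mathbb{P}_N)(B_{\hat\theta_N}-B_{\theta_0}) \to_{P^*} 0$ uniformly over $\mathcal{H}$, because a Donsker class is asymptotically equicontinuous and the finite-population fluctuations of a Glivenko--Cantelli increment vanish at rate $\sqrt{N}$ only after the increment itself is $o_{P^*}(1)$ in $L_2$. (For the calibrated versions the same argument applies after absorbing the multiplicative calibration weights $G(V^T\hat\alpha_N^{\#})$, using Condition \ref{cond:cal} and the fact that these weights converge uniformly to $1$, which was already established in the discussion preceding Theorem \ref{thm:calD}.) Combined with Fr\'echet differentiability (Condition \ref{cond:zthm1-2}), $\sqrt{N}\,\Psi(\hat\theta_N) = \dot\Psi_0\sqrt{N}(\hat\theta_N-\theta_0) + o_{P^*}(1 + \sqrt{N}\|\hat\theta_N-\theta_0\|)$, and a now-routine bootstrapping-the-rate argument upgrades consistency to $\sqrt{N}$-boundedness of $\hat\theta_N - \theta_0$.

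Once $\sqrt{N}(\hat\theta_N-\theta_0) = O_{P^*}(1)$ is in hand, I would substitute back to get
\begin{equation*}
\dot\Psi_0\,\sqrt{N}(\hat\theta_N-\theta_0) = -\sqrt{N}\bigl(\Psi_N^{\textup{H}}-\Psi\bigr)(\theta_0) + o_{P^*}(1) = -\mathbb{G}_N^{\textup{H}}B_{\theta_0} + o_{P^*}(1),
\end{equation*}
apply the continuous inverse $\dot\Psi_0^{-1}$ (Condition \ref{cond:zthm1-2}), and conclude by the continuous mapping theorem together with $\mathbb{G}_N^{\textup{H}}B_{\theta_0} \rightsquigarrow \mathbb{G}^{\textup{H}}B_{\theta_0}$ from Theorem \ref{thm:D} (the Donsker requirement is exactly the first half of Condition \ref{cond:zthm1-1}). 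The calibrated case is identical with Theorem \ref{thm:calD} in place of Theorem \ref{thm:D}, yielding the limit $-\dot\Psi_0^{-1}\mathbb{G}^{\textup{H},\#}B_{\theta_0}$.

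I expect the main obstacle to be the equicontinuity control of the finite-population part $\sqrt{N}(\mathbb{P}_N^{\textup{H}}-\mathbb{P}_N)(B_{\hat\theta_N}-B_{\theta_0})$: unlike the i.i.d.\ term, this is not a genuine empirical process and Condition \ref{cond:zthm1-3} says nothing about it, so I must derive its asymptotic equicontinuity from scratch. The cleanest route is to re-run the source-wise $m$-out-of-$n$ bootstrap representation used to prove Theorem \ref{thm:D}, but applied to the shrinking class $\{(B_\theta - B_{\theta_0})(h) : \|\theta-\theta_0\|\le\delta_N\}$, and invoke the asymptotic equicontinuity of the exchangeably weighted bootstrap empirical process of \cite{MR1245301} over a Donsker class; the Glivenko--Cantelli-with-integrable-envelope hypothesis in Condition \ref{cond:zthm1-1} guarantees the requisite measurability and envelope integrability. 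Handling the random calibration multipliers uniformly in $\theta$ near $\theta_0$ is a further technical wrinkle but is dispatched by a product-of-classes argument (Donsker times a uniformly-convergent-to-constant sequence stays Donsker-stable in the relevant sense).
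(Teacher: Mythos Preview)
Your overall architecture matches the paper's proof: reduce to the classical Z-theorem template, establish equicontinuity of the estimating process at $\theta_0$, bootstrap consistency to $\sqrt{N}$-rate, linearize via Fr\'echet differentiability, and apply the continuous mapping theorem together with Theorem \ref{thm:D} (or \ref{thm:calD}). You also correctly isolate the one genuinely new step: transferring the stochastic-equicontinuity hypothesis (Condition \ref{cond:zthm1-3}) from $\mathbb{G}_N$ to $\mathbb{G}_N^{\textup{H}}$.

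The gap is in your resolution of that step. You propose to handle the finite-population piece $\sqrt{N}(\mathbb{P}_N^{\textup{H}}-\mathbb{P}_N)(B_{\hat\theta_N}-B_{\theta_0})$ by invoking ``asymptotic equicontinuity of the exchangeably weighted bootstrap empirical process \ldots\ over a Donsker class.'' But Condition \ref{cond:zthm1-1} gives you Donsker only for $\{B_{\theta_0}(h):h\in\mathcal{H}\}$; the difference class $\{(B_\theta-B_{\theta_0})(h):\theta\in\Theta,h\in\mathcal{H}\}$ is assumed merely $P_0$-Glivenko--Cantelli with an integrable envelope. Glivenko--Cantelli plus an envelope does not deliver asymptotic equicontinuity of the bootstrap process over shrinking neighborhoods---that conclusion needs Donsker (or entropy) input that is not available here. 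So as written the finite-population half of your split does not close.

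The paper avoids this by not splitting at all. It works with $\mathbb{G}_N^{\textup{H}}$ as a single object and transfers equicontinuity from $\mathbb{G}_N$ via two auxiliary lemmas. Writing $\mathcal{D}_N=\{(B_\theta-B_{\theta_0})(h)/(1+\sqrt{N}\|\theta-\theta_0\|):h\in\mathcal{H},\ \|\theta-\theta_0\|\le\delta_N\}$, Condition \ref{cond:zthm1-3} says $\|\mathbb{G}_N\|_{\mathcal{D}_N}=o_{P^*}(1)$. Lemma \ref{lemma:vw239} upgrades this to $E^*\|\mathbb{G}_N\|_{\mathcal{D}_N}\to 0$, using only the integrable-envelope part of Condition \ref{cond:zthm1-1}. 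Then the maximal inequality Lemma \ref{lemma:gpleqg},
\[
E^*\bigl\|\mathbb{G}_N^{\textup{H}}\bigr\|_{\mathcal{F}}\ \lesssim\ E^*\bigl\|\mathbb{G}_N\bigr\|_{\mathcal{F}},
\]
transfers this bound to the H-process, and Markov's inequality gives $\|\mathbb{G}_N^{\textup{H}}\|_{\mathcal{D}_N}=o_{P^*}(1)$, exactly the H-equicontinuity you need. No Donsker hypothesis on the difference class is required; the i.i.d.\ equicontinuity assumption does all the work through the expectation comparison. For the calibrated versions the paper then shows $\|\mathbb{G}_N^{\textup{H},\#}-\mathbb{G}_N^{\textup{H}}\|_{\mathcal{D}_N}=o_{P^*}(1)$ by Taylor-expanding $G$ and dominated convergence; your ``absorb the calibration multipliers'' sketch is essentially this and is fine once the uncalibrated case is in hand.
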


\subsubsection{Non-regular rate of convergence for nuisance parameters}
\label{sbsec:5.3.2}
We focus on a semiparametric model  $\mathcal{P} = \{p_{\theta,\eta}: \theta\in\Theta\subset \mathbb{R}^p,\eta\in H\}$, the collection of densities on $(\mathcal{X},\mathcal{A})$ where $\Theta\subset \mathbb{R}^p$, and $H$ is a subset of a Banach space $(\mathcal{B},\lVert\cdot \rVert)$. 
The true distribution is $P_0 = P_{\theta_0,\eta_0}\in \mathcal{P}$.
Estimator $(\hat{\theta}_N,\hat{\eta}_N)$ solves the Hartley-type likelihood equations
\begin{eqnarray}
\label{eqn:wlik2}
&&\Psi_{N,1}^{\textup{H}}(\theta,\eta,\alpha)=\mathbb{P}_N^{\textup{H}}\dot{\ell}_{\theta,\eta}
        =o_{P^*}\left(N^{-1/2}\right),\nonumber \\
&&\Psi_{N,2}^{\textup{H}}(\theta,\eta,\alpha)\left[\underline{h}_0\right]
         =\mathbb{P}_N^{\textup{H}} B_{\theta,\eta}[\underline{h}_0]=o_{P^*}\left(N^{-1/2}\right),
\end{eqnarray}
Here $\dot{\ell}_{\theta,\eta}\in \mathcal{L}_2^0(P_{\theta,\eta})^p$ is the 
score function for $\theta$, and the score operator 
$B_{\theta,\eta}:\mathcal{H}\mapsto \mathcal{L}_2^0(P_{\theta,\eta})$ 
is the bounded linear operator mapping a direction $h$ in some Hilbert space $\mathcal{H}$ of one-dimensional submodels for $\eta$ along which $\eta\rightarrow \eta_0$
(see e.g. \cite{MR1652247} for review of semiparametric models).
We write $B_{\theta,\eta}\left[\underline{h}\right]
=\left(B_{\theta,\eta}(h_1),\ldots,B_{\theta,\eta}(h_p)\right)^T$ for $\underline{h}=(h_1,\ldots,h_p)^T\in \mathcal{H}^p$, and $\underline{h}_0$ is defined in Condition \ref{cond:wlenonreg2} below.
We also write $\dot{\ell}_{0} = \dot{\ell}_{\theta_0,\eta_0}$ and $B_0=B_{\theta_0,\eta_0}$. 
We assume:
\begin{cond}
\label{cond:wlenonreg1}
An estimator $(\hat{\theta}_N,\hat{\eta}_N)$ of $(\theta_0,\eta_0)$ satisfies
$|\hat{\theta}_N-\theta_0|= o_{P^*}(1)$, and
$\lVert \hat{\eta}_N-\eta_0\rVert = O_{P^*}(N^{-\beta})$ for some $\beta>0$, and solves the estimating equations (\ref{eqn:wlik2}) where $\mathbb{P}_N^{\textup{H}}$
may be replaced by $\mathbb{P}_N^{\textup{H},\#}$ with the corresponding estimators $(\hat{\theta}_{N,\#},\hat{\eta}_{N,\#})$ where $\#\in\{\textup{c,sc}\}$.
\end{cond}
\begin{cond}
\label{cond:wlenonreg2}
There is an $\underline{h}_0= (h_{0,1},\ldots,h_{0,p})^T\in \mathcal{H}^p$ such that
\begin{equation*}
P_0 \{  ( \dot{\ell}_{0} -B_{0}[\underline{h}_0]  ) B_{0}(h)  \}= 0, \quad \mbox{for all } h\in \mathcal{H}.
\end{equation*}
Furthermore, 
$I_0\equiv P_0\left(\dot{\ell}_{0}-B_{0}[\underline{h}_0]\right)^{\otimes 2} $ is finite and nonsingular.
\end{cond}

\begin{cond}
\label{cond:wlenonreg3}
(1) For any $\delta_N\downarrow 0$ and $C>0$,
\begin{eqnarray*}
&&\sup_{|\theta-\theta_0|\leq \delta_N,\lVert \eta-\eta_0\rVert\leq CN^{-\beta}}
\left|\mathbb{G}_N(\dot{\ell}_{\theta,\eta}-\dot{\ell}_{0})\right| = o_{P^*}(1),\\
&&\sup_{|\theta-\theta_0|\leq \delta_N,\lVert \eta-\eta_0\rVert\leq CN^{-\beta}}
\left|\mathbb{G}_N(B_{\theta,\eta}-B_{0})[\underline{h}_0]\right| = o_{P^*}(1).
\end{eqnarray*}
(2) For some $\delta>0$ classes
$\{\dot{\ell}_{\theta,\eta}:|\theta-\theta_0| +\lVert \eta-\eta_0\rVert \leq \delta\}$
and $\{B_{\theta,\eta}\left[\underline{h}_0\right]:|\theta-\theta_0|+\lVert \eta-\eta_0\rVert\leq \delta\}$ are $P_0$-Glivenko-Cantelli and have integrable envelopes.
Moreover, $\dot{\ell}_{\theta,\eta}$ and $B_{\theta,\eta}[\underline{h}_0]$ are
continuous with respect to $(\theta,\eta)$  in $L_1(P_0)$.
\end{cond}

\begin{cond}
\label{cond:wlenonreg4}
For some $\alpha>1$ satisfying $\alpha \beta >1/2$ and for $(\theta,\eta)$
in the neighborhood $\{(\theta,\eta):|\theta-\theta_0|\leq \delta_N,\lVert \eta-\eta_0\rVert\leq CN^{-\beta}\}$,
\begin{eqnarray*}
&&\left|P_0\left[\dot{\ell}_{\theta,\eta}-\dot{\ell}_{0}
       +\dot{\ell}_{0}\left\{\dot{\ell}_{0}^T(\theta-\theta_0)
       +B_{0}(\eta-\eta_0)\right\}\right]\right|\\
&&\quad=o\left(|\theta-\theta_0|\right)+O\left(\lVert \eta-\eta_0\rVert^{\alpha}\right),\\
&&\left|P_0\left[(B_{\theta,\eta}-B_{0})[\underline{h}_0]
         +B_{0}[\underline{h}_0]\left\{\dot{\ell}_{0}^T
              (\theta-\theta_0)+B_{0}(\eta-\eta_0)\right\}\right]\right|\\
&&\quad =  o\left(|\theta-\theta_0|\right)+O\left(\lVert \eta-\eta_0\rVert^{\alpha}\right).
\end{eqnarray*}
\end{cond}

We then obtain the following infinite-dimensional $Z$-theorem:
\begin{thm}
\label{thm:zthm2}
Under Conditions \ref{cond:cal}, \ref{cond:wlenonreg1}-\ref{cond:wlenonreg4},
\begin{eqnarray*}
\begin{array}{llll}
\sqrt{N}(\hat{\theta}_N-\theta_0)
\rightsquigarrow \mathbb{G}^{\textup{H}}\tilde{\ell}_0
\sim \quad N_p(0,\upsilon(\tilde{\ell}_0,\tilde{\ell}_0)),\\
\sqrt{N}(\hat{\theta}_{N,\#}-\theta_0)
\rightsquigarrow \mathbb{G}^{\textup{H},\#}\tilde{\ell}_0
\sim  N_p(0,\upsilon^{\#}(\tilde{\ell}_0,\tilde{\ell}_0)),
\end{array}
\end{eqnarray*}
where  $\#\in\{\textup{c,sc}\}$, and $\upsilon$ and $\upsilon^{\#}$ are as
defined in Theorems~ \ref{thm:D} and \ref{thm:calD}.
\end{thm}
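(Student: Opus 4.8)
The plan is to convert the two Hartley-type likelihood equations into a single asymptotically linear expansion of $\sqrt{N}(\hat\theta_N-\theta_0)$ driven by the efficient score, in parallel with the i.i.d.\@ proofs of Theorem~3.3.1 of \cite{MR1385671} and Theorem~6.1 of \cite{MR1394975}, with $\mathbb{G}_N^{\textup{H}}$ replacing $\mathbb{G}_N$. Write $\psi_{\theta,\eta}\equiv\dot{\ell}_{\theta,\eta}-B_{\theta,\eta}[\underline{h}_0]$, so that $\psi_0\equiv\dot{\ell}_0-B_0[\underline{h}_0]$ is the efficient score, $I_0=P_0\psi_0^{\otimes2}$, and $\tilde\ell_0\equiv I_0^{-1}\psi_0$ is the efficient influence function. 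Subtracting the two equations in (\ref{eqn:wlik2}) gives $\mathbb{P}_N^{\textup{H}}\psi_{\hat\theta_N,\hat\eta_N}=o_{P^*}(N^{-1/2})$; since $\psi_{\theta,\eta}$ depends on $X$ only and $\tilde P_0\psi_0=P_0\psi_0=0$, multiplying by $\sqrt N$ and using $\mathbb{G}_N^{\textup{H}}f=\sqrt N(\mathbb{P}_N^{\textup{H}}f-\tilde P_0f)$ yields the master identity
\begin{equation*}
o_{P^*}(1)=\mathbb{G}_N^{\textup{H}}\psi_0+\mathbb{G}_N^{\textup{H}}(\psi_{\hat\theta_N,\hat\eta_N}-\psi_0)+\sqrt N\,\tilde P_0(\psi_{\hat\theta_N,\hat\eta_N}-\psi_0).
\end{equation*}
The first term satisfies $\mathbb{G}_N^{\textup{H}}\psi_0\rightsquigarrow\mathbb{G}^{\textup{H}}\psi_0$ by Theorem~\ref{thm:D} applied to the finite, hence $P_0$-Donsker, class $\{\psi_{0,1},\dots,\psi_{0,p}\}$, whose members are square-integrable since $I_0$ is finite.

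For the drift term I would subtract the two estimates in Condition~\ref{cond:wlenonreg4}, evaluated at $(\hat\theta_N,\hat\eta_N)$, to obtain $P_0(\psi_{\hat\theta_N,\hat\eta_N}-\psi_0)=-P_0\{\psi_0(\dot{\ell}_0^T(\hat\theta_N-\theta_0)+B_0(\hat\eta_N-\eta_0))\}+o(|\hat\theta_N-\theta_0|)+O(\lVert\hat\eta_N-\eta_0\rVert^{\alpha})$. Condition~\ref{cond:wlenonreg2} gives $P_0\{\psi_0B_0(h)\}=0$ for every $h$ (so the $\hat\eta$-term drops) and $P_0\psi_0\dot{\ell}_0^T=P_0\psi_0(\psi_0+B_0[\underline{h}_0])^T=I_0$, so the drift equals $-I_0(\hat\theta_N-\theta_0)$ up to a $o(|\hat\theta_N-\theta_0|)+O(\lVert\hat\eta_N-\eta_0\rVert^{\alpha})$ remainder; multiplying by $\sqrt N$ and using $\lVert\hat\eta_N-\eta_0\rVert=O_{P^*}(N^{-\beta})$ with $\alpha\beta>1/2$ turns that remainder into $o_{P^*}(\sqrt N|\hat\theta_N-\theta_0|)+o_{P^*}(1)$. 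Substituting into the master identity, $I_0\sqrt N(\hat\theta_N-\theta_0)=\mathbb{G}_N^{\textup{H}}\psi_0+o_{P^*}(1)+o_{P^*}(\sqrt N|\hat\theta_N-\theta_0|)$; the right side is $O_{P^*}(1)$ and $I_0$ is nonsingular, which forces $\sqrt N(\hat\theta_N-\theta_0)=O_{P^*}(1)$, after which $\sqrt N(\hat\theta_N-\theta_0)=I_0^{-1}\mathbb{G}_N^{\textup{H}}\psi_0+o_{P^*}(1)=\mathbb{G}_N^{\textup{H}}\tilde\ell_0+o_{P^*}(1)\rightsquigarrow\mathbb{G}^{\textup{H}}\tilde\ell_0$, which is $N_p(0,\upsilon(\tilde\ell_0,\tilde\ell_0))$ because $\mathbb{G}^{\textup{H}}$ is the Gaussian process of Theorem~\ref{thm:D}.

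The remaining and hardest step is the stochastic-equicontinuity statement $\mathbb{G}_N^{\textup{H}}(\psi_{\hat\theta_N,\hat\eta_N}-\psi_0)=o_{P^*}(1)$. I would use the decomposition $\mathbb{G}_N^{\textup{H}}=\mathbb{G}_N+\sum_{j=1}^J(N^{(j)}/N)\sqrt N(\hat{\mathbb{P}}^{(j)}_{n^{(j)}}-\mathbb{P}^{(j)}_{N^{(j)}})\rho^{(j)}$ of Section~\ref{sec:3} and split $\psi=\dot{\ell}-B[\underline{h}_0]$ into its two summands. On the event $\{|\hat\theta_N-\theta_0|\le\delta_N,\ \lVert\hat\eta_N-\eta_0\rVert\le CN^{-\beta}\}$, whose probability tends to one by Condition~\ref{cond:wlenonreg1}, the $\mathbb{G}_N$ part is dominated by the two suprema in Condition~\ref{cond:wlenonreg3}(1) and is $o_{P^*}(1)$. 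For each of the $J$ finite-population pieces, $\sqrt{N^{(j)}}(\hat{\mathbb{P}}^{(j)}_{n^{(j)}}-\mathbb{P}^{(j)}_{N^{(j)}})$ is, conditionally on the first-stage data, an exchangeably weighted bootstrap process as in \cite{MR1245301}; I would invoke the (unconditional version of the) exchangeable-bootstrap equicontinuity/maximal inequality with index classes $\{\rho^{(j)}\dot{\ell}_{\theta,\eta}\}$ and $\{\rho^{(j)}B_{\theta,\eta}[\underline{h}_0]\}$ over the shrinking neighborhood --- these are $P_0^{(j)}$-Glivenko-Cantelli with integrable envelopes and $L_1(P_0^{(j)})$-continuous in $(\theta,\eta)$ by Condition~\ref{cond:wlenonreg3}(2) since $\rho^{(j)}$ is bounded --- which makes the bootstrap fluctuations vanish in probability. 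Obtaining control of these $\sqrt N$-scaled weighted-empirical fluctuations over an uncountable, shrinking neighborhood from a Glivenko-Cantelli hypothesis alone, through the exchangeable-bootstrap structure rather than bracketing or uniform entropy, is the delicate point of the whole proof.

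Finally, the calibrated conclusions follow by rerunning the argument with $\mathbb{P}_N^{\textup{H}}$ replaced by $\mathbb{P}_N^{\textup{H},\#}$, $\#\in\{\textup{c,sc}\}$: the estimating equation reads $\mathbb{P}_N^{\textup{H},\#}\psi_{\hat\theta_{N,\#},\hat\eta_{N,\#}}=o_{P^*}(N^{-1/2})$, the leading term converges to $\mathbb{G}^{\textup{H},\#}\psi_0$ by Theorem~\ref{thm:calD} applied to the finite class $\{\psi_{0,i}\}$ (for which $\lVert P_0\rVert_{\mathcal{F}}=0<\infty$) under Condition~\ref{cond:cal}, the drift term is unchanged since it involves only $\tilde P_0$ and $P_0\psi_0=0$, and the equicontinuity term is $o_{P^*}(1)$ by the same decomposition together with the uniform convergence of $G(V^T\hat\alpha_N^{\#})$ to $1$ on the bounded support of $V$ guaranteed by Condition~\ref{cond:cal}. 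Collecting terms gives $\sqrt N(\hat\theta_{N,\#}-\theta_0)=I_0^{-1}\mathbb{G}_N^{\textup{H},\#}\psi_0+o_{P^*}(1)=\mathbb{G}_N^{\textup{H},\#}\tilde\ell_0+o_{P^*}(1)\rightsquigarrow\mathbb{G}^{\textup{H},\#}\tilde\ell_0\sim N_p(0,\upsilon^{\#}(\tilde\ell_0,\tilde\ell_0))$ with $\upsilon^{\#}$ as in Theorem~\ref{thm:calD}.
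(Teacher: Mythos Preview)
Your overall architecture---subtract the two estimating equations, expand into a leading H-empirical term at $\psi_0$, a stochastic-equicontinuity remainder, and a deterministic drift linearized via Conditions~\ref{cond:wlenonreg2} and~\ref{cond:wlenonreg4}---matches the paper's proof, and your treatment of the leading term by Theorems~\ref{thm:D}/\ref{thm:calD} and of the drift is correct.

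The one place where your argument is imprecise and departs from the paper is the equicontinuity step $\mathbb{G}_N^{\textup{H}}(\psi_{\hat\theta_N,\hat\eta_N}-\psi_0)=o_{P^*}(1)$. You propose to split via the decomposition and control each finite-population piece by an ``exchangeable-bootstrap equicontinuity'' argument, citing that the relevant classes are Glivenko-Cantelli with integrable envelope and $L_1$-continuous. But Glivenko-Cantelli by itself gives no control at the $\sqrt N$ scale; that part of Condition~\ref{cond:wlenonreg3}(2) is used only for the \emph{calibrated} processes. What actually drives the step is Condition~\ref{cond:wlenonreg3}(1), and the paper does not split into bootstrap pieces at all here. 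Instead it uses two auxiliary lemmas: Lemma~\ref{lemma:vw239} upgrades $\lVert\mathbb{G}_N\rVert_{\mathcal{F}_N}=o_{P^*}(1)$ plus the integrable-envelope part of Condition~\ref{cond:wlenonreg3}(2) to $E^*\lVert\mathbb{G}_N\rVert_{\mathcal{F}_N}\to 0$, and then Lemma~\ref{lemma:gpleqg} supplies the universal comparison $E^*\lVert\mathbb{G}_N^{\textup{H}}\rVert_{\mathcal{F}}\lesssim E^*\lVert\mathbb{G}_N\rVert_{\mathcal{F}}$, after which Markov gives $\lVert\mathbb{G}_N^{\textup{H}}\rVert_{\mathcal{F}_N}=o_{P^*}(1)$ directly. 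This is packaged as Lemma~\ref{lemma:wza5}. For the calibrated versions, the same lemma then invokes the Glivenko-Cantelli and $L_1$-continuity parts of Condition~\ref{cond:wlenonreg3}(2) together with a Taylor expansion of $G$ (as in the proof of Theorem~\ref{thm:calD}) to show $\lVert\mathbb{G}_N^{\textup{H},\#}-\mathbb{G}_N^{\textup{H}}\rVert_{\mathcal{F}_N}=o_{P^*}(1)$; this is morally your ``$G(V^T\hat\alpha)\to 1$ uniformly'' remark, but it needs the dominated-convergence argument on the $P_0$-drift piece, not merely uniform convergence of the weights. Your route via separate bootstrap-equicontinuity could be made to work, but only after first securing $E^*\lVert\mathbb{G}_N\rVert_{\mathcal{F}_N}\to 0$ and then feeding it through a multiplier/symmetrization inequality source by source; the paper's single maximal inequality is the cleaner device and is exactly what Lemma~\ref{lemma:gpleqg} provides.
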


\subsection{Examples}
\begin{ex}[Parametric model]
\label{ex:para}
Consider the parametric model $\{dP_\theta/d\mu = p_\theta:\theta\in
\Theta\subset \mathbb{R}^p\}$ with a dominating measure $\mu$.
A natural estimator $\hat{\theta}_N$ of $\theta$ is a solution to the Hartley-type likelihood equation given by
\begin{equation*}
\frac{1}{N}\sum_{i=1}^N \sum_{j=1}^J\frac{R_i^{(j)}\rho^{(j)}(V_i)}{\pi^{(j)}(V_i)} \dot{\ell}_{\theta}(X_i) = 0,
\end{equation*}
where $\dot{\ell}_\theta = d\log p_{\theta}/d\theta$.
Let $\dot{\ell}_\theta(x) = (\dot{\ell}_{\theta,1}(x),\ldots,\dot{\ell}_{\theta,p}(x))^T$. For $\mathcal{H} = \{h_1,\ldots,h_p\}$, define the map by $h_i\mapsto B_{\theta}(h_i)=\dot{\ell}_{\theta,i}(x)$.
Then the above estimating equation can be written as $\lVert \Psi_N^{\textup{H}} (\theta)\rVert_{\mathcal{H}} =\sup_{h\in \mathcal{H}}|\mathbb{P}_N^{\textup{H}} B_{\theta}(h)|= 0$.
Square integrability of $\dot{\ell}_{\theta}$ for each $\theta$ under $P_{0}$ implies Condition \ref{cond:zthm1-1}. 
When the Fisher information matrix $I_0\equiv
P_0(\dot{\ell}_{0}^{\otimes 2})$ is invertible, and $\log
p_\theta$ is twice differentiable with respect to $\theta$ in a
neighborhood of $\theta_0$, Condition \ref{cond:zthm1-2} is satisfied. 
If we further assume $\log p_\theta$ is twice continuously differentiable in a neighborhood of $\theta_0$ and $\Theta$ is compact, Condition \ref{cond:zthm1-3} is met.  
Consistency follows from Theorem \ref{thm:const} if $\{B_\theta(h_i),i=1,\ldots,p,\theta\in\Theta\}$ has an integrable envelope.
Hence our first $Z$-theorem (Theorem \ref{thm:zthm1}) yields
\begin{equation*}
\sqrt{N}(\hat{\theta}_N-\theta_0)  \rightarrow_d \mathbb{G}^{\textup{H}}\tilde{\ell}_0\sim N\left(0,I_0^{-1} + \sum_{j=1}^J \nu^{(j)}\frac{1-p^{(j)}}{p^{(j)}} \textup{Var}_{0}^{(j)}(\rho^{(j)}\tilde{\ell}_{0})\right),
\end{equation*}
where $\tilde{\ell}_0=I_0^{-1}\dot{\ell}_0$. The cases for calibration are similar.
\end{ex}

\begin{ex}[Regular semiparametric model with $\eta$ as measure]
\label{ex:semipara1}
Consider the semiparametric model  $\mathcal{P} = \{p_{\theta,\eta}: \theta\in\Theta\subset \mathbb{R}^p,\eta\in H\}$ where the nuisance parameter $\eta$ is a measure. 
Several $Z$-theorems of the form of Theorem \ref{thm:zthm1} were applied to this case \cite{MR1385671} (see Section 3.3 of \cite{MR1385671} in the i.i.d.\@ setting and \cite{MR2325244, MR2391566,MR3059418} for stratified samples). 
We obtain a similar result from Theorem \ref{thm:zthm1}  by following arguments in \cite{MR3059418}.
The score operator in this model is $B_{\theta,\eta}:L_2(\eta)\mapsto L_2(P_{\theta,\eta})$ and its adjoint operator is denoted as $B_{\theta,\eta}^*:L_2(P_{\theta,\eta})\mapsto L_2(\eta)$.
As in \cite{MR1385671}, we assume $B^*_0B_0$ is continuously invertible and that $\Psi$ has continuously invertible Fr\'{e}chet derivative $\dot{\Psi}_0$ at $(\theta_0,\eta_0)$ with respect to $(\theta,\eta)$ of the  form
\begin{eqnarray*}
&&\dot{\Psi}_{11}(\theta-\theta_0)
       =-P_{0}\dot{\ell}_{0}\dot{\ell}_{0}^T(\theta-\theta_0),\\
&&\dot{\Psi}_{12}(\eta-\eta_0)=-\int B^*_{0}\dot{\ell}_{0}d(\eta-\eta_0),
 \\
&&\dot{\Psi}_{21}(\theta-\theta_0)h
       =-P_{0}B_{0}h\dot{\ell}_{0}^T(\theta-\theta_0), \quad h\in L_2(\eta),
\\
&&\dot{\Psi}_{22}(\eta-\eta_0)h=-\int B^*_{0}B_{0}h d(\eta-\eta_0),\quad h\in L_2(\eta).
\end{eqnarray*}
Further assuming consistency and asymptotic equicontinuity (see  \cite{MR2325244, MR2391566,MR3059418,MR1385671} for more details), Theorem \ref{thm:zthm1} yields  
\begin{equation*}
\sqrt{N}(\hat{\theta}_N-\theta_0) \rightarrow_d \mathbb{G}^{\textup{H}}\tilde{\ell}_0\sim
 N\left(0,I_0^{-1} + \sum_{j=1}^J\nu^{(j)}\frac{1-p^{(j)}}{p^{(j)}}\textup{Var}_{0}^{(j)}(\rho^{(j)}\tilde{\ell}_0)\right)
\end{equation*}
where $\tilde{I}_0=P_0[(I-B_0(B^*_0B_0)^{-1}B_0^*)\dot{\ell}_0\dot{\ell}_0^T]$ 
is the efficient information for $\theta$ and 
$\tilde{\ell}_0=\tilde{I}_0^{-1}(I-B_0(B^*_0B_0)^{-1}B_0^*)\dot{\ell}_0$ 
is the efficient influence function for $\theta$ in the i.i.d.\@ setting.
\end{ex}

\begin{ex}[Cox model with right-censored data]
\label{ex:coxr}
Let $T\sim F$ be a failure time, and $Z=(Z_1,Z_2)$ be covariates.
The Cox model specifies the relationship between covariates and the cumulative  hazard function by
\begin{equation*}
\Lambda(t|z)=\exp(\theta^Tz)\Lambda(t),
\end{equation*}
where $\theta\in \mathbb{R}^p$ is the regression parameter, and
$\Lambda$ is the baseline cumulative hazard function.
Under right censoring we do not always observe $T$ but  observe $Y\equiv \min\{T,C\}$ and $\Delta\equiv I(T\leq C)$ where $C$ is censoring time.
We assume there is some constant $\tau$ such that $P(T> \tau)>0$ and $P(C\geq \tau)=P(C=\tau)>0$ (see \cite{MR1915446} for other conditions).
We assume sources are formed based on  $V=(Y,\Delta,Z_2)$ and $Z_1$ is collected later. 
In the i.i.d.\@ setting, a nonparametric likelihood for one observation is $\ell_{\theta,\Lambda} (y, \delta , z)
 = \log\{(e^{\theta^Tz}\Lambda\{y\})^\delta e^{-\Lambda(y)e^{\theta^Tz}}\}$ where $\Lambda\{t\}$ is the jump of $\Lambda$ at $t$.
The score for $\theta$ and the score operator $B_{\theta,\Lambda}:\mathcal{H}\mapsto L_2(P_{\theta,\Lambda})$ are
\begin{eqnarray*}
&&\dot{\ell}_{\theta,\Lambda} (y, \delta , z)= z \{ \delta - e^{\theta^Tz}\Lambda(y) \},\\
&&B_{\theta,\Lambda}h (y, \delta , z) = \delta h(y)  -e^{\theta^Tz}\int_{[0,y]}hd\Lambda,
\end{eqnarray*}
where $\mathcal{H}$ is the unit ball  in the space $BV[0,\tau]$.
Here the score operator is obtained by differentiating $\ell_{\theta,\Lambda_t}$ with respect to $t$ at $t=0$ where $d\Lambda_t =(1+th)d\Lambda$.
Our proposed estimator $(\hat{\theta}_N,\hat{\Lambda}_N)$ is the solution to $\mathbb{P}_N^{\textup{H}} \dot{\ell}_{\theta,\Lambda}=0$ and $\mathbb{P}_N^{\textup{H}} B_{\theta,\Lambda}(h) = 0$,
whereby $\hat{\theta}_N$ is the solution to the weighted partial likelihood equation and $\hat{\Lambda}_N$ is the weighted Breslow estimator (see e.g. \cite{MR2325244}).
Consistency and conditions for asymptotic normality can be verified
along the same line as in \cite{MR3059418} by replacing their weighted empirical process results by our H-empirical process results.
Then Example \ref{ex:semipara1} yields
\begin{equation*}
\sqrt{N}(\hat{\theta}_N-\theta_0) \rightarrow_d 
\mathbb{G}^{\textup{H}}\tilde{\ell}_0\sim N\left(0,I_0^{-1} + \sum_{j=1}^J \nu^{(j)}\frac{1-p^{(j)}}{p^{(j)}} \textup{Var}_{0}^{(j)}(\rho^{(j)}\tilde{\ell}_{0})\right).
\end{equation*}
Here the efficient influence function $\tilde{\ell}_0=I_0^{-1}\ell^*$
in the i.i.d.\@ setting is computed from the efficient score   
\begin{equation*}
\ell^*_{0} (y, \delta , z)
=\delta(z-(M_1/M_0)(y))
         -e^{\theta_0^Tz}\int_{[0,y]}\left(z-(M_1/M_0)(t)\right)d\Lambda_0(t),
\end{equation*}
and the efficient information  
\begin{equation*}
I_{0} 
= E\left[\left(\ell^*_{0}\right)^{\otimes 2}\right]
=Ee^{\theta_0^TZ}\int_0^\tau \left(Z-(M_1/M_0)(y)\right)^{\otimes
  2}P(Y\geq y|Z)d\Lambda_0(y),
\end{equation*}
for $\theta$ in the i.i.d.\@ setting where $M_k(s) = P_{\theta_0,\Lambda_0}[Z^ke^{\theta_0^TZ}I(Y\geq s)]$, $k= 0, 1$.

\end{ex}

\begin{ex}[Cox model with current status data]
Let $T\sim F$ be a failure time, and $Z=(Z_1,Z_2)$ be covariates.
Under the case 1 interval censoring \cite{HuangWellner1997}, we do not observe 
$T$ but we only know whether an event occurs before an examination time $C$. 
We assume  sources are formed based on  $V=(C,\Delta,Z_2)$ and $Z_1$ are collected later. 
The likelihood in the i.i.d.\@ setting is 
$\ell(\theta,\Lambda)\equiv\delta\log \{1-e^{-\Lambda(c)\exp(\theta^Tz)}\}-(1-\delta)e^{\theta^Tz }\Lambda(c)$.
The score for $\theta$ and $\Lambda$ is then 
\begin{eqnarray*}
&&\dot{\ell}_{\theta,\Lambda}(c,\delta,z)=z\exp(\theta^Tz)\Lambda(c)(\delta r(c,z;\theta,\Lambda)-(1-\delta)),\\
&&B_{\theta,\Lambda}(h)(c,\delta,z)
=\exp(\theta^Tz)h(c)\left\{\delta r(c,z;\theta,\Lambda)-(1-\delta)\right\}
\end{eqnarray*}
where
$r(c,z;\theta,\Lambda) =\exp(-e^{\theta^Tz}\Lambda(c))/\{1-\exp(-e^{\theta^Tz}\Lambda(c))\}$ (see \cite{MR1394975} for details).
Our proposed  estimator $(\hat{\theta}_N,\hat{\Lambda}_N)$ is the solution to $\mathbb{P}_N^{\textup{H}} \dot{\ell}_{\theta,\Lambda}=0$ and $\mathbb{P}_N^{\textup{H}} B_{\theta,\Lambda}(h) = 0$. 
Conditions \ref{cond:wlenonreg1}-\ref{cond:wlenonreg4} can be verified
along the same line as in \cite{MR3059418} by replacing their weighted empirical process results by our H-empirical process results.
In particular, our U-LLN (Theorem \ref{thm:gc}) is used for consistency, and Theorem \ref{thm:rate} establishes the rate of convergence of $\hat{\Lambda}_N$ as $N^{1/3}$
 in view of $(\hat{\theta}_N,\hat{\Lambda}_N)$ as the maximizer of
 $\mathbb{P}_N^{\textup{H}} \ell(\theta,\Lambda)$. This rate agrees
 with the one in the i.i.d.\@ setting \cite{MR1394975}.
Then Theorem \ref{thm:zthm2} yields
\begin{equation*}
\sqrt{N}(\hat{\theta}_N-\theta_0) \rightarrow_d 
\mathbb{G}^{\textup{H}}\tilde{\ell}_0\sim N\left(0,I_0^{-1} + \sum_{j=1}^J \nu^{(j)}\frac{1-p^{(j)}}{p^{(j)}} \textup{Var}_{0}^{(j)}(\rho^{(j)}\tilde{\ell}_{0})\right).
\end{equation*}
Here $\tilde{\ell}_0=I_0^{-1}\ell_0^*$ with
\begin{equation*}
\ell^*_{0} 
\equiv e^{\theta_0^Tz}Q(c,\delta,z;\theta_0,\Lambda_0)\Lambda_0(c)
\left\{z-\frac{E[Ze^{2\theta_0^TZ}O(C|Z)|C=c]}{E[e^{2\theta_0^TZ}O(C|Z)|C=c]}\right\}  
\end{equation*}
and $I_{0} = P_0(\ell^*_{0})^{\otimes 2}$
where $Q(c,\delta,z;\theta,\Lambda) =\delta r(c,z;\theta,\Lambda)-(1-\delta)$ and 
$O(c|z)= \{1-F(c)\}^{\exp(\theta_0^Tz)}/[1-\{1-F(c)\}^{\exp(\theta_0^Tz)}]$.

\end{ex}

\section{Numerical Results}
\label{sec:6}
\subsection{Simulation Studies}

\begin{table}[h]
\scriptsize

\begin{tabular}{c|cc|c|cc|cc|c}
& $\mathcal{V}^{(1)}$& $\mathcal{V}^{(2)}$&$N$& $N^{(1)}$& $N^{(2)}$& $n^{(1)}$& $n^{(2)}$& Duplication\\\hline
Scenario 1&$Z_2\geq -1$&$Z_2\leq 1$&500&421&421&85&127&21\\
& &&10000&8413&8414&1683&2525&410\\\hline
Scenario 2&$\mathcal{V}$&$Z_2\leq 1$&500&500&421&100&127&25\\
& &&10000&10000&8413&2000&2524&505\\\hline
Scenario 3&$\mathcal{V}$&$\Delta=1$&500&500& 76&100&76&15\\
& &&10000 &10000&1529&2000&1529&305\\
\end{tabular}
\begin{tabular}{c|c|ccc|ccc|cc}
&&&&&&&&\multicolumn{2}{c}{Duplication}\\
& $N$& $N^{(1)}$& $N^{(2)}$& $N^{(3)}$&$n^{(1)}$& $n^{(2)}$&$n^{(3)}$&                                                                       twice&                                                                       
thrice\\\hline
Scenario 4&500&76&423&278&76&43&28&13&1\\
& 10000&8475&5564&1529&848&556&1529&258&9\\
\end{tabular}

\caption{Sample sizes and the numbers of duplications based on 2000
  simulated 
  datasets.\label{tab:cox1}}

\end{table}

\begin{table}
\tiny

\begin{tabular}{cc|cc|cc|cc||cc|cc|cc}
$(\alpha,\beta)$ & &\multicolumn{6}{|c|}{$(0.2,0.5)$}&
 \multicolumn{6}{|c}{$(0.2,5.0)$}\\
$\theta$&& \multicolumn{2}{|c|}{$\log (2)$}&\multicolumn{2}{|c|}{$\log (1.2)$}&\multicolumn{2}{|c|}{$0$}&\multicolumn{2}{|c|}{$\log (2)$}&\multicolumn{2}{|c|}{$\log (1.2)$}&\multicolumn{2}{|c}{$0$}\\
$N$& &500& 10000 & 500 & 10000 & 500&\multicolumn{1}{c|}{10000}& 500 &
                                                                     10000 & 500& 10000& 500&10000 \\\hline\hline
\multicolumn{14}{l}{Complete data (MLE)}\\\hline
$\theta_1$  &  Bias &.004&.0031&.004&.0002&.001&.0003&.017&.0000&.001&.0016&.001&.0024\\
&SD &.246&.0534&.241&.0531&.236&.0518&.244&.0530&.236&.0522&.234&.0518\\\hline
 $\theta_2$ &  Bias &.004&.0004&.006&.0005&.001&.0008&.011&.0020&.004&.0002&.004&.0004 \\
&SD &.121&.0270&.119&.0259&.120&.0259&.129&.0274&.117&.0260&.122&.0255\\\hline\hline

\multicolumn{14}{l}{Scenario 1}\\\hline
$\theta_1$  &  Bias &.024&.0061&.014&.0020&.011&.0017&.022&.0031&.002&.0026&.005&.0004\\
&SD &.482&.0985&.432&.0914 &.429&.0887 &.477&.0977 &.435&.0905&.423&.0889\\
&SEE &.467&.0989&.425&.0908&.419&.0899&.471&.0973&.427&.0892&.425&.0891\\\hline
 $\theta_2$ &  Bias &.005&.0031&.005&.0031&.011&.0011&.050&.0000&.004&0005&.008&.0010\\
&SD &.251&.0526&.242&.0486 &.234&.0495&.277&.0544 &.248&.0496&.249&.0505\\
&SEE &.260&.0524&.248&.0509&.244&.0507&.285&.0550&.254&.0504&.254&.0503\\\hline\hline

\multicolumn{14}{l}{Scenario 2}\\\hline
$\theta_1$  &  Bias &.062&.0005&.017&.0014&.009&.0010&.034&.0012&.019&.0030&.008&.0054\\
&SD &.479&.0967&.421&.0894&.416&.0876&.469&.0941&.425&.0889&.404&.0899\\
&SEE &.467&.0981&.421&.0901&.412&.0871&.459&.0952&.423&.0888&.415&.0883\\\hline
 $\theta_2$ &  Bias &.016&.0000&.003&.0001&.015&.0001&.026&.0027&.002&.0005&.010&.0015\\
&SD &.250&.0526&.226&.0499&.222&.0493&.259&.0510&.238&.0486&.238&.0501\\
&SEE &.252&.0510&.238&.0493&.232&.0480&.267&.0527&.241&.0489&.236&.0487\\\hline\hline

\multicolumn{14}{l}{Scenario 3}\\\hline
$\theta_1$  &  Bias &.005&.0009&.008&.0028&.006&.0011&.025&.0008&.008&.0004&.006&.0004\\
&SD &.330&.0733&.309&.0660&.301&.0676&.399&.0860&.391&.0840&.395&.0841\\
&SEE &.330&.0728&.308&.0674&.305&.0668&.375&.0856&.365&.0826&.366&.0828\\\hline
 $\theta_2$ &  Bias &.023&.0003&.018&.0010&.001&.0007&.029&.0027&.016&.0001&.001&.0014\\
&SD &.181&.0378&.157&.0341&.163&.0342&.193&.0437&.201&.0422&.202&.0414\\
&SEE &.171&.0381&.156&.0339&.156&.0334&.183&.0427&.181&.0413&.183&.0414\\\hline\hline
\multicolumn{14}{l}{Scenario 4}\\\hline 
$\theta_1$  &  Bias &.010&.0019&.003&.0001&.005&.0003&.060&.0023&.016&.0023&.002&.0001\\
&SD &.368&.0789&.354&.0760&.372&.0775&.432&.0970&.466&.1031 &.481&.1022\\
&SEE &.355&.0789&.343&.0758&.347&.0765&.401&.0965&.417&.0996&.418&.1010\\\hline
 $\theta_2$ &  Bias &.023&.0018&.006&.0007&.012&.0016&.011&.0038&.013&.0011&.037&.0019\\
&SD &.192&.0407&.179&.0363&.185&.0367&.222&.0477&.235&.0489&.239&.0499\\
&SEE &.181&.0407&.169&.0366&.169&.0367&.198&.0470&.203&.0484&.202&.0492\\\hline
\multicolumn{14}{l}{\tiny Note: MLE, maximum likelihood estimator
  based on $N$ items, Bias, an absolute Monte Carlo sample
  bias; }\\
 \multicolumn{14}{l}{\tiny SD, a Monte Carlo sample standard
  deviation; SEE, average of a plug-in estimator of a standard error.}\\
\end{tabular}
\caption{Results of Monte Carlo simulations with different $\theta$,
  $(\alpha,\beta)$, and scenarios.\label{tab:cox2}}

\end{table}

\begin{figure}
\includegraphics[width=11cm,height=6cm]{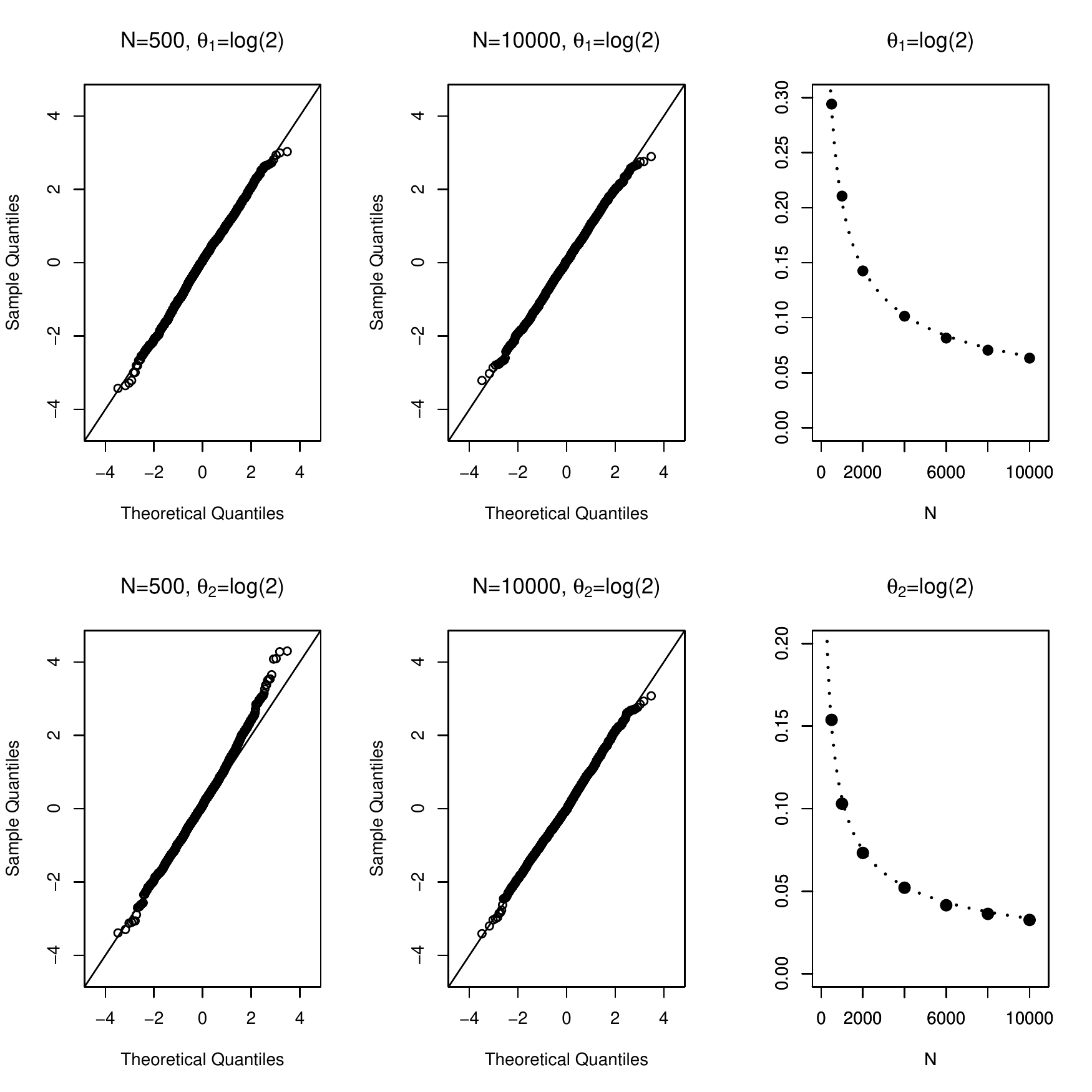}
\caption{Q-Q plots of
  $\sqrt{N}(\hat{\theta}-\theta_0)/\widehat{SE}(\hat{\theta})$ superimposed by $y=x$ and plots of averages of absolute
  differences $\lVert \hat{\theta}_{N}-\theta_{0}\rVert$
  against $N$ superimposed by $y=c/x^{1/2},c=6.5,3.4$ in Scenario 4
  where $\widehat{SE}(\hat{\theta})$ is a plug-in estimator of a standard
  error of $\sqrt{N}(\hat{\theta}-\theta_0)$.}
\label{fig:2}
\end{figure}

\begin{table}
\tiny

\begin{tabular}{cccccccccc}
 $(\alpha,\beta)=(.2,.5)$&\multicolumn{4}{c}{$N=500$}&\multicolumn{4}{c}{$N=10000$}\\ 
$\theta_1=\log (2)$& w/o& SC & C & DC & &w/o& SC & C & DC \\\hline
 MLE &  .246&&&&&.0534&&&\\
  S &.368  &.333&.370&.371&&.0789&.0720&.0789&.0789\\
  SF &.375  &.341&.376&.376&&.0809&.0740&.0809&.0804\\
  B & .497 &.474&.497&.497&&.1060&.1005&.1060&.1060\\
&  &&&&&&&&\\
$\theta_2=\log (2)$& w/o& SC & C & DC & &w/o& SC & C & DC \\\hline
 MLE & .121 &&&&&.0270 &&&\\
  S &.192  &.188&.193&.193&&.0407&.0395&.0405&.0403\\
  SF   &.197&.192&.197&.196&&.0414&.0401&.0412&.0409\\
  B   &.258&.253&.258&.258&&.0530&.0517&.0530&.0530\\
  \multicolumn{10}{l}{Note:  S, the proposed weights; SF,
  $\rho$ for a single-frame estimator; B, a balanced weights;}\\
\multicolumn{10}{l}{
 w/o, non-calibration; SC, the proposed
  calibration; C, the standard calibration; DC, the }\\
\multicolumn{10}{l}{ data-source-specific calibration. All calibrations use $U$ and $Y$.}\\
\end{tabular}

\caption{Comparison of calibrations
  and $\rho$ by standard deviations in Scenario 4.\label{tab:cox3}}

\end{table}

We conducted simulation studies to evaluate finite sample properties
of our proposed estimator in the Cox model with right censoring discussed in
Example \ref{ex:coxr}.
Linear and logistic regression models are treated in the Appendix \ref{sec:data}. 
Data were generated from the Cox model with two independent covariates
$Z_1\sim \textup{Bernoulli}(.5)$ and $Z_2\sim N(0,1)$. The failure
time $T$ follows
$\textup{Weibull}(\alpha,\beta),\alpha=.2,\beta\in\{.5,5\}$ at the
baseline, and is subject to independent censoring by $C\sim
\textup{Uniform}(0,c)$ where $c$ was chosen to yield approximately 85\%
censoring.
The regression coefficients are $\theta=(\theta_1,\theta_2)$ with
$\theta_1=\theta_2 \in \{0,\log (1.2),\log(2)\}$.
The auxiliary binary variable $U$ is correlated with $Z_1$ through sensitivity $P(U=1|Z_1=1)=.9$  and specificity $P(U=0|Z_1=0)=.9$. 

We considered four scenarios based on the formation of data sources. 
Data sources are
$\mathcal{V}^{(1)} =\{V:Z_2\geq -1\}$ and $\mathcal{V}^{(2)}
=\{V:Z_2\leq 1\}$ in Scenario 1 and 
$\mathcal{V}^{(1)}=\mathcal{V}$ and $\mathcal{V}^{(2)}=\{V:Z_2\leq 1\}$
in Scenario 2.
Sampling fractions in both scenarios were 20\% and 30\%.
In Scenario 3, data sources are $\mathcal{V}^{(1)}
=\mathcal{V}$ and $\mathcal{V}^{(2)}
=\{V:\Delta= 1\}$ with sampling fractions 20\% and 100\%.
Scenario 4 has three sources where membership in the first
two were determined via multinomial logistic
regression with $Z_2$ as a covariate and the third source is $\mathcal{V}^{(3)}
=\{V:\Delta= 1\}$.
Sampling fractions were 10\%, 10\% and 100\%, respectively. Average
sample sizes and numbers of duplications over 2000 datasets in each
scenario are shown in Table \ref{tab:cox1}.

The Monte Carlo sample bias and standard deviation of the proposed
estimator with $\rho $ from Proposition \ref{prop:optBer} are reported
in Table \ref{tab:cox2}.
The results show that bias is small and standard
deviations are close to averages of plug-in estimators of standard
errors in each setting.
In Figure \ref{fig:2}, right panels show that averages of absolute deviations
$\lVert \hat{\theta}_N-\theta_0\rVert $ are proportional to
$1/N^{1/2}$, and Q-Q plots of the scaled estimators indicate their
distributions are approximately the standard normal distribution.
Table \ref{tab:cox3} displays comparison of three different calibration methods in
Section \ref{sec:4} and two other choices of $\rho$ (the extension of the
single-frame estimator of \cite{Kalton1986} studied by
\cite{MR2324141} (SF), and balanced weights of the inverse of the
number of sources to which an item belongs (B)) in Scenario 4.
Results show that the estimator with the proposed weights $\rho$ and
calibration achieved the smallest standard deviations in all cases. 
All of the above results provide numerical support for our theory.
Discussion of additional results and a plug-in variance estimator is
provided in the Appendix \ref{sec:data}.
Note that our estimator did not lose much efficiency compared to the MLE
for complete data if we base comparison on the number of items used
for estimation. For example, $2933$ items with duplication were used for our estimator
on average when $N=10000$ in Scenario 4 and its standard deviations are .0789 and .0407 with $(\alpha,\beta)=(.2,.5)$ and $\theta_1=\theta_2=\log(2)$.
In this case, standard deviations of the MLE based on 2933 items are
expected to be about 
.0986 and .0499. 



\subsection{Real Data Example}
We illustrate our methods  with the national Wilms tumor study (NWTS)
\cite{pmid2544249} where 
3915 patients with Wilms tumor were followed until the disease
progression.
Data contain complete information of all subjects, and was
used to compare different stratified designs in
\cite{BreslowChatterjee1999}. 
To compare our methods with the MLE with the full cohort and the
weighted likelihood estimator with stratified sampling
\cite{MR2325244}, we
randomly divided the dataset into two, applied 
three methods with different designs to training data, and computed the
partial likelihood based on testing data.  
Data sources are deceased subjects,
subjects with unfavorable histology measured at hospitals subject to misclassification, and the entire cohort with
sampling fractions 100\%, 50\% and 10\% resulting in selecting 506
subjects with duplications (438 without duplication).
Strata for stratified sampling are deceased subjects, living
subjects with unfavorable histology and the rest with sampling
fractions 100\%, 50\% and 14\% yielding 502 selected subjects.   
We fitted data to the Cox model to identify predictors of the relapse of Wilms tumor. Results of the MLE is considered to be most reliable.
Estimates from merged and stratified data were all
similar to the MLE except the one for cancer stage. 
Estimated standard errors of the proposed estimator were smaller than
those of 
the estimator with balanced $\rho$ but larger than those from
stratified data because stratified sampling effectively used
information by avoiding duplication at the design stage. 
These differences, however, were rather small relative to the
magnitudes of estimates even when making comparison
with the MLE (except cancer stage). 
The partial likelihood at the proposed estimator shows better fit than
in stratified sampling though the estimator with balanced $\rho$
yielded a larger value.
Overall, good performance of the proposed estimator illustrates the
potential of data integration as an alternative to stratified sampling.

\begin{table}[h]\scriptsize
\label{tab:cox4}
\begin{tabular}{lcccccccc}
&\multicolumn{2}{c}{Full cohort}& \multicolumn{4}{c}{Data integration}&\multicolumn{2}{c}{Stratified sampling}\\
$\rho$& &&\multicolumn{2}{c}{Proposed}&  \multicolumn{2}{c}{Balanced} & \\\hline
\# subjects&1957 &&  \multicolumn{4}{l}{438 (506 with duplication) } &502&\\
Duplication&  0&& \multicolumn{2}{l}{64 (twice)}&\multicolumn{2}{l}{ 2
                                                  (thrice)}&0&\\\hline
Partial likelihood& -2458.8& &-2464.7 & & -2463.2& &-2467.2& \\\hline
Variable& $\hat{\theta}$& SE & $\hat{\theta}$& SE &$\hat{\theta}$& SE &  $\hat{\theta}$& SE \\\hline
Histology&1.430&0.125&1.243 &0.236 &1.383&0.268 &1.419&0.190\\
Age&0.084&0.021&0.045&0.043& 0.043&0.047&0.110&0.035\\
Stage (III/IV)&1.506&0.356&2.680&0.761&2.589&0.848&2.157&0.705\\
Tumor&0.064&0.020&0.082 &0.046&0.076&0.052&0.106&0.041\\
Stage
  $\times$Tumor&-0.079&0.029&-0.156&0.061&-0.079&0.068&-0.134&0.055\\\hline
\multicolumn{8}{l}{Note: Histology is measured at a central
  laboratory.}
\end{tabular}
\caption{Point estimates and estimated standard errors in the analysis of the NWTS study with different
  sampling schemes.}
\end{table}

\section{Discussion}
\label{sec:7}
We developed large sample theory for merged data from multiple sources.
We proved two limit theorems for the H-empirical process, 
and applied them to study asymptotic properties of infinite-dimensional $M$-estimation.
Our theory is a non-trivial extension of empirical process theory to a
dependent and biased sample with duplication.

We adopted Hartley's estimator as a building block for our theory.
This estimator and its variants have been extensively studied under multiple-frame surveys in sampling theory.
To conclude this paper, we briefly describe two approaches in sampling
theory to illustrate differences from ours.

A primary difference lies in probabilistic frameworks. 
Sampling theory 
adopts a finite-population framework where randomness arises only from selection of units. 
Parameters are finite-population parameters such as
sample averages, and statistical models
are outside the scope.
Asymptotic results usually assume the existence of CLT 
a priori and asymptotic variance is defined as limits of 
deterministic sequences (see e.g.\@
\cite{MR2324141,Metcalf2009,MR2796566,MR1147105,MR1394091}).
This difference leads to different optimal $\rho$ and calibration as seen above. 

Another less common approach called
the super-population framework \cite{MR0386084, MR648029, MR2253102} adopts
a similar two-stage formulation \cite{MR2253102} but two qualitatively distinct sets of conditions are assumed for different stages of sampling. 
These conditions concern specific random and non-random vectors
instead of treating a class of functions in a systematic way.
Applications 
are thus
limited to (generalized) linear models \cite{Lu2012,Metcalf2009}
where variance estimators
 (p. 4690 of \cite{Lu2012}, p. 1514 of
\cite{Metcalf2009}) are our variance estimator for the first
stage only. 
This seeming discrepancy reflects a distinction in probabilistic frameworks.

\par\noindent
{\bf Acknowledgements:}
We owe thanks to the Associate Editor and two anonymous referees for
their constructive suggestions, which significantly improved the paper. 
We also thank Jon Wellner for helpful discussions of empirical process theory.


\appendix

\section{U-LLN and U-CLT}
\label{sec:uclt}
We prove Theorems \ref{thm:gc} and \ref{thm:D} with rigor.
Define the finite sampling empirical measure and process for each source by 
\begin{eqnarray*}
&&\mathbb{P}_{N^{(j)}}^{R(j)} \equiv \frac{1}{N^{(j)}} \sum_{i=1}^{N^{(j)}}R_{(j),i}^{(j)} \delta_{(X_{(j),i},V_{(j),i})} ,\\
&&\mathbb{G}^{R(j)}_{N^{(j)}} \equiv\sqrt{N^{(j)}}\left(\mathbb{P}_{N^{(j)}}^{R(j)} -\frac{n^{(j)}}{N^{(j)}}\mathbb{P}_{N^{(j)}}^{(j)} \right), \quad j=1,\ldots,J,
\end{eqnarray*} 
where $\mathbb{P}_{N^{(j)}}^{(j)} = (1/N^{(j)})\sum_{i=1}^{N^{(j)}}
\delta_{(X_{(j),i,},V_{(j),i})}$ is the empirical measure restricted
to source $j$.
Note that
\begin{equation*}
\mathbb{P}_{N^{(j)}}^{R(j)}=\frac{n^{(j)}}{N^{(j)}}\hat{\mathbb{P}}_{n^{(j)}}^{(j)},
\end{equation*}
and that $\hat{\mathbb{P}}_{n^{(j)}}^{(j)}$ and
$\mathbb{P}_{N^{(j)}}^{(j)}$ have been defined in (\ref{eqn:hempj}).
Each finite sampling empirical process is an exchangeably weighted bootstrap empirical process  with $R^{(j)}$s viewed as the bootstrap weights.
Note that $\rho$ is not included in these definitions.
Also define the index set as
\begin{equation*}
\tilde{\mathcal{F}}_j\equiv\{\tilde{f}^{(j)}:\tilde{f}^{(j)}(x,v) =\rho^{(j)}(v) f(x), f\in\mathcal{F}\},\quad j=1,\ldots, J.
\end{equation*}
With this notation, we obtain the following decomposition
\begin{eqnarray}
\mathbb{G}_N^{\textup{H}} f
&=&\mathbb{G}_N  f+ \sum_{j=1}^J\sqrt{\frac{N^{(j)}}{N}} \frac{N^{(j)}}{n^{(j)}}\mathbb{G}_{N^{(j)}}^{R (j)}\tilde{f}^{(j)} \label{eqn:decompmult}
\end{eqnarray}
where $\tilde{f}^{(j)} \equiv \rho^{(j)}f\in\tilde{\mathcal{F}}_j$.
Recall that $\mathbb{G}_N=\sqrt{N}(\mathbb{P}_N-\tilde{P}_0)$ is the
empirical process in Section \ref{sec:3}.

\begin{proof}[Proof of Theorem \ref{thm:gc}]
Using the decomposition (\ref{eqn:decompmult}), the triangle inequality yields
\begin{eqnarray*}
\lVert \mathbb{P}_N^{\textup{H}} - \tilde{P}_0\rVert_{\mathcal{F}}
\leq \lVert \mathbb{P}_N - \tilde{P}_0\rVert_{\mathcal{F}}
+ \sum_{j=1}^J \frac{N^{(j)}}{N} \frac{N^{(j)}}{n^{(j)}}\left\lVert \mathbb{P}_{N^{(j)}}^{R (j)} -\frac{n^{(j)}}{N^{(j)}} \mathbb{P}_{N^{(j)}}\right\rVert_{\tilde{\mathcal{F}}_j}
\end{eqnarray*}
The first term is $o_{P^*}(1)$ by the Glivenko-Cantelli theorem. For the second term, note that $N^{(j)}/N\rightarrow_{P^*} \nu^{(j)}$ by the weak law of large numbers and $n^{(j)}/N^{(j)}\rightarrow p^{(j)}>0$ by assumption. 
To show $\lVert \mathbb{P}_{N^{(j)}}^{R (j)} -(n^{(j)}/N^{(j)})
\mathbb{P}^{(j)}_{N^{(j)}}\rVert_{\tilde{\mathcal{F}}_j}=o_{P^*}(1)$,
we apply the bootstrap Glivenko-Cantelli theorem (Lemma 3.6.16 of
\cite{MR1385671}) in view of $\mathbb{P}_{N^{(j)}}^{R (j)}$ as the exchangeably weighted bootstrap empirical process.
It is easy to see that sampling indicators $(R^{(j)}_{(j),1},\ldots,R^{(j)}_{(j),N^{(j)}})$ satisfy the condition (3.6.8) of \cite{MR1385671} for the exchangeable bootstrap weights. 
We use the unconditional version of the theorem which replaces the probability measure for bootstrap weights by the joint probability measure of data and bootstrap weights in Lemma 3.6.16 of \cite{MR1385671}.
This result is easily obtained by replacing the conditional multiplier inequality by the unconditional multiplier inequality in Lemma 3.6.7 of \cite{MR1385671} in the proof of Lemma 3.6.16 of \cite{MR1385671}.
Now, it suffices to show that $\tilde{\mathcal{F}}_j$ are Glivenko-Cantelli classes.
Since $\mathcal{F}$ is $\tilde{P}_0$-Glivenko-Cantelli 
and $\rho^{(j)}$ are bounded,
the desired result follows from the Glivenko-Cantelli preservation theorem (Proposition 2 of \cite{MR1857319}).
\end{proof}

\begin{proof}[Proof of Theorem \ref{thm:D}]
The first term $\mathbb{G}_N$ in (\ref{eqn:decompmult})  weakly converges to the $P_0$-Brownian bridge process $\mathbb{G}$ by the usual Donsker theorem. 
For finite sampling empirical processes in the second term, note that
the classes $\tilde{\mathcal{F}}_j$ are Donsker classes since
$\rho^{(j)}$ are measurable and bounded, and $\mathcal{F}$ is Donsker (see Example 2.10.10 of \cite{MR1385671}).
We apply the unconditional version of Theorem 3.6.13 of \cite{MR1385671} to obtain $\mathbb{G}^{R(j)}_{N^{(j)}}\rightsquigarrow\sqrt{p^{(j)}(1-p^{(j)})}\mathbb{G}^{(j)}$ in $\ell^{\infty}(\tilde{\mathcal{F}}_j)$ where $\mathbb{G}^{(j)}$ are the $P_0^{(j)}$-Brownian bridge processes.
To prove the unconditional result, first obtain the same finite dimensional convergence by verifying the Lindeberg-Feller condition in the proof of Lemma 3.6.15 of \cite{MR1385671} with sample average replaced by expectation. Then, use the unconditional multiplier inequality in Lemma 3.6.7 of \cite{MR1385671} in the proof of Theorem 3.6.13 of \cite{MR1385671}. 

These limiting processes can be viewed as the stochastic processes indexed by $\mathcal{F}$ in $\ell^{\infty}(\mathcal{F}) $ because elements of $\tilde{\mathcal{F}}_j$ can be identified uniquely by $\mathcal{F}$ and $\rho$ is bounded.
Since $N^{(j)}/N\rightarrow_{P^*} \nu^{(j)}$ by the LLN and $n^{(j)}/N^{(j)}\rightarrow p^{(j)}$ by assumption, we obtain the first expression in the theorem. 

For the covariance function, it suffices to show that all limiting processes, $\mathbb{G}$, $\mathbb{G}^{(j)}$, $j=1,\ldots,J,$ are independent.
Since convergence in $\ell^{\infty}(\mathcal{F})$ implies marginal convergence, this reduces to computing the limit of covariance among $\mathbb{G}_N$ and  $\mathbb{G}_{N^{(j)}}^{R(j)},j=1,\ldots,J,$ evaluated at arbitrary functions  $f,g\in\mathcal{F}$.
Let $\underline{X}_N=(X_1,\ldots,X_N)$, and $\underline{V}_N=(V_1,\ldots,V_N)$. We have
\begin{eqnarray*}
\textup{Cov}(\mathbb{G}_{N^{(j)}}^{R(j)}f,\mathbb{G}_{N^{(j')}}^{R(j')}g) 
&=& \textup{Cov}(E[\mathbb{G}_{N^{(j)}}^{R(j)}f|\underline{X}_N,\underline{V}_N],E[\mathbb{G}_{N^{(j')}}^{R(j')}g|\underline{X}_N,\underline{V}_N])\\
&&+E[\textup{Cov}(\mathbb{G}_{N^{(j)}}^{R(j)}f,\mathbb{G}_{N^{(j')}}^{R(j')}g|\underline{X}_N,\underline{V}_N)].
\end{eqnarray*}
Because $E[R_{(j),i}^{(j)}|\underline{X}_N,\underline{V}_N]=n^{(j)}/N^{(j)}$, $E[\mathbb{G}_{N^{(j)}}^{R(j)}f|\underline{X}_N,\underline{V}_N]=0$. 
Independence of $R^{(j)}_{(j),i}$ and $R^{(j')}_{(j'),i}$ yields $\textup{Cov}(\mathbb{G}_{N^{(j)}}^{R(j)}f,\mathbb{G}_{N^{(j')}}^{R(j')}g|\underline{X}_N,\underline{V}_N)=0$. 
Since limiting processes $\mathbb{G}^{(j)}$ and $\mathbb{G}^{(j)'}$ are Gaussian, they are independent.
Independence of $\mathbb{G}$ and $\mathbb{G}^{(j)}$ can be similarly proved.
\end{proof}

\begin{proof}[Proof of Corollary \ref{cor:fpD}]
Apply Theorem 3.6.13 of \cite{MR1385671} conditionally to the second terms of the decomposition (\ref{eqn:decompmult}). 
\end{proof}

\begin{proof}[Proof of Theorem \ref{thm:BerD}]
We apply the usual Donsker theorem. 
The class of measurable functions
$\{(x,v,r^{(j)})\mapsto\sum_{j=1}^Jr^{(j)}\rho^{(j)}(v)/p^{(j)}f(x):r^{(j)}\in\{0,1\},f\in\mathcal{F}\}$
is a Donsker class by Theorem 2.10.6 of \cite{MR1385671} since
$\mathcal{F}$ is Donsker and $\rho^{(j)}$ are measurable and bounded. 
The limiting process $\mathbb{G}^{\textup{H,Ber}}$ of $\mathbb{G}^{\textup{H,Ber}}_N$ is a Brownian bridge process. 
Simplify its covariance function to $v^{\textup{Ber}}(f,g)$ by the law of total covariance.
\end{proof}

\begin{proof}[Proof of Proposition \ref{prop:optBer}]
It follows from Theorem \ref{thm:BerD} that the term that involves with $(c^{(1)},c^{(2)})$ in the asymptotic variance is
\begin{equation*}
\left\{od(p^{(1)})\left(c^{(1)}\right)^2+od(p^{(2)})\left(c^{(2)}\right)^2\right\}P_0f^{\otimes 2}I\{V\in \mathcal{V}^{(1)}\cap \mathcal{V}^{(2)}\}.
\end{equation*}
Since the matrix in the last display is positive definite, it suffices to minimize the quantity in the parenthesis with a constraint $c^{(1)}+c^{(2)}=1$. 
The case for $J>2$ is similar.
This completes the proof.
\end{proof}

\begin{proof}[Proof of Proposition \ref{prop:optWR}]
The proof is similar to the proof of Proposition
\ref{prop:optBer} above and is omitted.
\end{proof}

\section{Calibration}
\label{sec:cal}
\begin{prop}
\label{prop:alphaCal}
Under Condition \ref{cond:cal}, $\hat{\alpha}_N^{\#}\rightarrow_{P^*} 0$ with $\#\in\{\textup{c,sc}\}$ and 
\begin{eqnarray*}
&&\sqrt{N}\hat{\alpha}_N^{\textup{c}} \rightsquigarrow -\dot{G}(0)^{-1}\left\{P_0 V^{\otimes 2}\right\}^{-1}
         \sum_{j=1}^J\sqrt{\nu^{(j)}}\sqrt{\frac{1-p^{(j)}}{p^{(j)}}}\mathbb{G}^{(j)}\rho^{(j)}V,\\
&&\sqrt{N^{(j)}}\hat{\alpha}_N^{\textup{sc},(j)} \\
&&\rightsquigarrow -\dot{G}(0)^{-1}\left\{\textup{Var}_{0}^{(j)}\left( \rho^{(j)}V\right) \right\}^{-1}
         \sqrt{\frac{1-p^{(j)}}{p^{(j)}}}\mathbb{G}^{(j)}\left(\rho^{(j)}V-P_0^{(j)}\rho^{(j)}V\right).
\end{eqnarray*}
Here $\rho^{(j)}V$ is understood to be $\rho^{(j)}(V)V$.
\end{prop}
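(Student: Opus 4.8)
The plan is to treat each calibration equation as a finite-dimensional $Z$-equation for $\alpha$ and run the standard one-step expansion, drawing the ``score at $\alpha=0$'' from the H-empirical CLT (Theorem \ref{thm:D}, together with the finite-sampling limit $\mathbb{G}^{R(j)}_{N^{(j)}}\rightsquigarrow\sqrt{p^{(j)}(1-p^{(j)})}\mathbb{G}^{(j)}$ established in its proof) and the ``information matrix'' from the U-LLN (Theorem \ref{thm:gc}). Throughout, $V$ has bounded support (Condition \ref{cond:cal}(b)), so every finite-dimensional class of the type $\{v\mapsto G(v^T\alpha)v_a:\alpha\in K\}$ or $\{v\mapsto\dot G(v^T\alpha)v_av_b:\alpha\in K\}$ with $K$ compact is uniformly bounded, hence $\tilde P_0$-Glivenko--Cantelli, and Theorem \ref{thm:gc} applies to it verbatim.

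For the standard calibration I would set $\Phi_N(\alpha)\equiv\mathbb{P}_N^{\textup{H}}G(V^T\alpha)V-\mathbb{P}_NV$, so that $\hat\alpha_N^{\textup{c}}$ solves $\Phi_N=0$. The potential $H_N(\alpha)\equiv\mathbb{P}_N^{\textup{H}}\mathcal{G}(V^T\alpha)-\mathbb{P}_N(V^T\alpha)$ (with $\mathcal{G}'=G$) has gradient $\Phi_N$ and Hessian $\mathbb{P}_N^{\textup{H}}\{\dot G(V^T\alpha)V^{\otimes2}\}$; since the H-empirical weights are nonnegative, $\dot G>0$, and $\mathbb{P}_N^{\textup{H}}V^{\otimes2}\rightarrow_{P^*}\tilde P_0V^{\otimes2}$ which is positive definite by Condition \ref{cond:cal}(d), $H_N$ is strictly convex with probability tending to one, so (\ref{eqn:caleqn}) has at most one solution. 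A local Brouwer-degree (monotone-map) argument on a small ball around $0$ then applies: because $\Phi_N(0)=(\mathbb{P}_N^{\textup{H}}-\mathbb{P}_N)V=O_{P^*}(N^{-1/2})$ and $\dot\Phi_N(0)=\dot G(0)\mathbb{P}_N^{\textup{H}}V^{\otimes2}\rightarrow_{P^*}\dot G(0)\tilde P_0V^{\otimes2}$, $\Phi_N$ points uniformly outward on a sphere of small radius, so its (unique) zero exists with probability tending to one and is $O_{P^*}(N^{-1/2})$; in particular $\hat\alpha_N^{\textup{c}}\rightarrow_{P^*}0$. Finally a coordinatewise mean-value expansion gives $0=\Phi_N(0)+A_N\hat\alpha_N^{\textup{c}}$ with $A_N\rightarrow_{P^*}\dot G(0)\tilde P_0V^{\otimes2}$, so $\sqrt N\hat\alpha_N^{\textup{c}}=-A_N^{-1}\sqrt N\Phi_N(0)+o_{P^*}(1)$; and since $\sqrt N\Phi_N(0)=\mathbb{G}_N^{\textup{H}}V-\mathbb{G}_NV$, applying the decomposition (\ref{eqn:decompmult}) to the bounded finite-dimensional index $V$ yields $\sqrt N\Phi_N(0)\rightsquigarrow\sum_{j=1}^J\sqrt{\nu^{(j)}}\sqrt{(1-p^{(j)})/p^{(j)}}\,\mathbb{G}^{(j)}(\rho^{(j)}V)$. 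Slutsky's lemma gives the first display.

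For the sample-specific calibration I would fix $j$ with $P_0^{(j)}\rho^{(j)}(V)>0$ (when $P_0^{(j)}\rho^{(j)}(V)=0$ the equation is trivial). Writing $\bar\rho_N^{(j)}\equiv\mathbb{P}^{(j)}_{N^{(j)}}\rho^{(j)}V$ and using that $\pi^{(j)}(V_i)=n^{(j)}/N^{(j)}$ on $\mathcal{V}^{(j)}$, the left side of (\ref{eqn:scaleqn}) equals $\Phi_N^{(j)}(\alpha^{(j)})\equiv\hat{\mathbb{P}}^{(j)}_{n^{(j)}}[G((\rho^{(j)}V-\bar\rho_N^{(j)})^T\alpha^{(j)})(\rho^{(j)}V-\bar\rho_N^{(j)})]$ in the notation of (\ref{eqn:hempj}), with $\bar\rho_N^{(j)}$ measurable with respect to the first-stage data and $\bar\rho_N^{(j)}\rightarrow_{P^*}P_0^{(j)}\rho^{(j)}V$ by the ordinary LLN. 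Conditioning on the first stage, $\Phi_N^{(j)}$ is an exchangeably weighted bootstrap average over source $j$, and the same program goes through: strict convexity of the associated potential with probability tending to one (bootstrap U-LLN as in the proof of Theorem \ref{thm:gc}, plus Condition \ref{cond:cal}(d)), a local degree argument giving $\hat\alpha_N^{\textup{sc},(j)}=O_{P^*}((N^{(j)})^{-1/2})$, and a mean-value expansion $\sqrt{N^{(j)}}\hat\alpha_N^{\textup{sc},(j)}=-B_N^{-1}\sqrt{N^{(j)}}\Phi_N^{(j)}(0)+o_{P^*}(1)$ with $B_N\rightarrow_{P^*}\dot G(0)\textup{Var}_0^{(j)}(\rho^{(j)}V)$. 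Since $\Phi_N^{(j)}(0)=(\hat{\mathbb{P}}^{(j)}_{n^{(j)}}-\mathbb{P}^{(j)}_{N^{(j)}})\rho^{(j)}V=(N^{(j)}/n^{(j)})\,\mathbb{G}^{R(j)}_{N^{(j)}}(\rho^{(j)}V)$, the finite-sampling CLT from the proof of Theorem \ref{thm:D} and $N^{(j)}/n^{(j)}\rightarrow1/p^{(j)}$ give $\sqrt{N^{(j)}}\Phi_N^{(j)}(0)\rightsquigarrow\sqrt{(1-p^{(j)})/p^{(j)}}\,\mathbb{G}^{(j)}(\rho^{(j)}V)$, and the Brownian bridge $\mathbb{G}^{(j)}$ annihilates the constant $P_0^{(j)}\rho^{(j)}V$, so this equals $\sqrt{(1-p^{(j)})/p^{(j)}}\,\mathbb{G}^{(j)}(\rho^{(j)}V-P_0^{(j)}\rho^{(j)}V)$. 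Slutsky's lemma gives the second display.

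The step I expect to be the main obstacle is controlling the random, first-stage-measurable center $\bar\rho_N^{(j)}$ inside the nonlinear link $G$ in (\ref{eqn:scaleqn}): one must show that replacing $\bar\rho_N^{(j)}$ by $P_0^{(j)}\rho^{(j)}V$ is negligible uniformly over $\alpha^{(j)}$ in a fixed neighborhood of $0$, and, for the term $B_N$, that $\hat{\mathbb{P}}^{(j)}_{n^{(j)}}[(\rho^{(j)}V-\bar\rho_N^{(j)})^{\otimes2}]\rightarrow_{P^*}\textup{Var}_0^{(j)}(\rho^{(j)}V)$ in spite of the moving center. After conditioning on the first-stage data both reduce to the equicontinuity of $(\alpha,v)\mapsto G(\alpha^Tv)v$ on compacts, combined with the consistency of $\bar\rho_N^{(j)}$ and the bootstrap U-LLN; they simplify completely for the admissible choice $G(x)=1+x$, for which (\ref{eqn:caleqn}) and (\ref{eqn:scaleqn}) are linear in $\alpha$ (with the explicit solution recorded after Condition \ref{cond:cal}) and the displayed limits are immediate from Theorems \ref{thm:gc}--\ref{thm:D}.
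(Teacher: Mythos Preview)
Your proposal is correct and follows the same overall route as the paper: set up a finite-dimensional $Z$-equation for $\alpha$, prove consistency, and read off the limit from a one-step linearization using the U-LLN for the Jacobian and the finite-population piece of the H-empirical CLT for the score at $\alpha=0$. The paper packages these steps through the abstract $Z$-estimation machinery---Theorem~5.7 of \cite{MR1652247} for consistency (invoking the global Glivenko--Cantelli property of $\{v\mapsto G(v^T\alpha)v:\alpha\in\mathbb{R}^k\}$ and a well-separated zero, both borrowed from \cite{SW2013supp}) and Theorem~3.3.1 of \cite{MR1385671} for asymptotic normality---whereas you work more directly via strict convexity of the potential and a local degree argument. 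Your path has the advantage of staying on a compact neighborhood of $0$, so you never need the global Glivenko--Cantelli claim or the identifiability argument; the paper's path is shorter once those external results are admitted. For $\hat\alpha_N^{\textup{sc},(j)}$ the paper simply observes that the problem is a single-stratum instance of the centered calibration in \cite{MR3059418} and cites that paper, while you redo the computation by hand; your treatment of the random center $\bar\rho_N^{(j)}$ is the right issue to isolate and your sketch for handling it is adequate.

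One small slip: in the line ``$\Phi_N^{(j)}(0)=(\hat{\mathbb{P}}^{(j)}_{n^{(j)}}-\mathbb{P}^{(j)}_{N^{(j)}})\rho^{(j)}V=(N^{(j)}/n^{(j)})\,\mathbb{G}^{R(j)}_{N^{(j)}}(\rho^{(j)}V)$'' the right-hand side is missing a factor $(N^{(j)})^{-1/2}$; the identity you actually use (and which gives the correct limit) is $\sqrt{N^{(j)}}\,\Phi_N^{(j)}(0)=(N^{(j)}/n^{(j)})\,\mathbb{G}^{R(j)}_{N^{(j)}}(\rho^{(j)}V)$.
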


\begin{proof}
For $\hat{\alpha}_N^{\textup{c}}$, define $\Phi_{N,\textup{c}}(\alpha)\equiv \mathbb{P}_N^{\textup{H}} G(V^T\alpha)V -\mathbb{P}_NV$ 
and $\Phi_{\textup{c}}(\alpha)\equiv P_0[\{G(V^T\alpha)-1\}V]$.
Note that $\Phi_{N,\textup{c}}(\hat{\alpha}_N^{\textup{c}})=0$ and $\Phi_{\textup{c}}(0)=0$.
We apply Theorem 5.7 of \cite{MR1652247} for a consistency proof. 
The first condition of the theorem is the supremum of $\left | \Phi_{N,\textup{c}}(\alpha)-\Phi_{\textup{c}}(\alpha)\right| $ over $\alpha\in\mathbb{R}^k$ is $o_{P^*}(1)$. 
Here $|\cdot|$ is the Euclidean norm.
The triangle inequality yields
\begin{eqnarray*}
\sup_{\alpha\in\mathbb{R}^k}\left| \Phi_{N,\textup{c}}(\alpha)-\Phi_{\textup{c}}(\alpha)\right| 
&\leq&\sup_{\alpha\in\mathbb{R}^k}\left|(\mathbb{P}_N^{\textup{H}}-P_0)G(V^T\alpha)V\right| \nonumber
 +\left| (\mathbb{P}_N-P_0)V\right |.
\end{eqnarray*}
The  class $\left\{v\mapsto G(v^T\alpha)v:  \alpha \in \mathbb{R}^k\right\}$ of functions on $\mathcal{V}$ is a $P_0$-Glivenko-Cantelli class (see the proof of Proposition A.1 of \cite{SW2013supp}). 
Thus, the first term is $o_{P^*}(1)$ by Theorem \ref{thm:gc}.
The second term is $o_{P^*}(1)$ by the weak law of large numbers.
This verifies the first condition.
The second condition of the theorem is that for any $\epsilon>0$,
$\inf_{|\alpha|>\epsilon}\left| \Phi_{\textup{c}}(\alpha)\right|>0$, which was established in the proof of Proposition A.1 of \cite{SW2013supp}. 
This proves that $\hat{\alpha}_N^{\textup{c}}\rightarrow_{P^*}0$. 

We apply Theorem 3.3.1 of \cite{MR1385671} to show the asymptotic normality of $\hat{\alpha}_N^{\textup{c}}$. 
For the asymptotic equicontinuity condition, Taylor's theorem yields
\begin{eqnarray*}
&&\sqrt{N}(\Phi_{N,\textup{c}}-\Phi_{\textup{c}})(\hat{\alpha}_N^{\textup{c}})-\sqrt{N}(\Phi_{N,\textup{c}}-\Phi_{c})(0)\\
&&=(\mathbb{P}_N^{\textup{H}}-P_0)\dot{G}(V^T\alpha^*)V^{\otimes 2}\sqrt{N}(\hat{\alpha}_N^{\textup{c}})
\end{eqnarray*}
for some $\alpha^*$ with $|\alpha^*-0|\leq |\hat{\alpha}_N^{\textup{c}}-0|$.
This term is $o_{P^*}(1+\sqrt{N}|\hat{\alpha}_N^{\textup{c}}|)$ because $(\mathbb{P}_N^{\textup{H}}-P_0)V^{\otimes 2}\dot{G}(V^T\alpha) \rightarrow_{P^*} 0$, uniformly in $\alpha$.
To see this, note that we need to show 
$\{v\mapsto v^{\otimes 2}\dot{G}(v^T\alpha):\alpha\in\mathbb{R}^k\}$ is a  $P_0$-Glivenko-Cantelli class in order to apply our Glivenko-Cantelli theorem (Theorem \ref{thm:gc}), but this was proved in the proof of Proposition A.1 of \cite{SW2013supp}). 
Hence, this verifies the asymptotic equicontinuity condition.
We show the weak convergence of the process $\sqrt{N}(\Phi_{N,\textup{c}}-\Phi_{\textup{c}})(\alpha)$ at $\alpha=0$.
Corollary \ref{cor:fpD} yields
\begin{eqnarray*}
&&\sqrt{N}(\Phi_{N,\textup{c}}-\Phi_{\textup{c}})(0)
=\sqrt{N}(\mathbb{P}_N^{\textup{H}}-\mathbb{P}_N) V
\rightsquigarrow 
\sum_{j=1}^J\sqrt{\nu^{(j)}}\sqrt{\frac{1-p^{(j)}}{p^{(j)}}}\mathbb{G}^{(j)}\rho^{(j)}V.
\end{eqnarray*}
The Fr\'{e}chet derivative $\dot{\Phi}_{\textup{c}}(0)$ of $\Phi_{\textup{c}}$ at $0$ is $\dot{G}(0)P_0V^{\otimes 2}$. 
It follows by Theorem 3.3.1 of \cite{MR1385671} that 
\begin{eqnarray*}
\sqrt{N}\hat{\alpha}_N^{\textup{c}}
&=& -\dot{\Phi}_{\textup{c}}(0)^{-1}\sqrt{N}(\Phi_{N,\textup{c}}-\Phi_{\textup{c}})(0)+o_{P*}(1)\\
&\rightsquigarrow& -\dot{G}(0)^{-1}\left\{P_0 V^{\otimes 2}\right\}^{-1}
\sum_{j=1}^J\sqrt{\nu^{(j)}}\sqrt{\frac{1-p^{(j)}}{p^{(j)}}}\mathbb{G}^{(j)}\rho^{(j)}V.
\end{eqnarray*}

For $\hat{\alpha}^{\textup{sc},(j)}_N$, note that this can be viewed
as the solution to the centered calibration with a single stratum with
probability measure $P_0^{(j)}$ for the Horvitz-Thompson estimator of
$\rho^{(j)}V$ in a two-phase stratified sample studied by
\cite{MR3059418}. Desired consistency and asymptotic normality follow
from this observation. 
\end{proof}

\begin{proof}[Proof of Theorem \ref{thm:calD}]
First, we consider $\mathbb{G}_N^{\textup{H,c}}$. 
Note that $\mathbb{G}_N^{\textup{H,c}} f= \sqrt{N}(\mathbb{P}_N^{\textup{H}} G(V^T\hat{\alpha}^{\textup{c}})f-P_0f)$.
For finite-dimensional convergence, we have for a fixed function $f\in\mathcal{F}$ that
\begin{eqnarray*}
\mathbb{G}_N^{\textup{H,c}}f &=&  \mathbb{G}_N^{\textup{H}}f+(\mathbb{G}_N^{\textup{H,c}}-\mathbb{G}_N^{\textup{H}}) f\\
&=& \mathbb{G}_N^{\textup{H}}f+\mathbb{G}_N^{\textup{H}}\{G(V^T\hat{\alpha}_N^{\textup{c}})-1\}f+\sqrt{N}P_0\{G(V^T\hat{\alpha}_N^{\textup{c}})-1\}f \\
&=&\mathbb{G}_N^{\textup{H}}f+\{(\mathbb{P}_N^{\textup{H}}-P_0)+P_0\}\dot{G}(V^T\alpha^*)fV^T\sqrt{N}\hat{\alpha}^{\textup{c}}_N,
\end{eqnarray*}
for some $\alpha^*$ with $|\alpha^*-0|\leq |\hat{\alpha}_N^{\textup{c}}-0|$.
Because of boundedness of $\dot{G}$, integrability of $fV$ (by
Cauchy-Schwarz inequality; here and other place) and $\sqrt{N}\hat{\alpha}_N^{\textup{c}} = O_{P^*}(1)$ by Proposition \ref{prop:alphaCal}, the second term in the display is $\dot{G}(0)P_0f(X)V^T \sqrt{N}\hat{\alpha}_N^{\textup{c}}+o_{P^*}(1)$ by our Glivenko-Cantelli theorem (Theorem \ref{thm:gc}) and dominated convergence theorem. 
It follows from Theorem \ref{thm:D} and Proposition \ref{prop:alphaCal} that $\mathbb{G}_N^{\textup{H,c}}f$ converges in distribution to
\begin{eqnarray*}
\mathbb{G}f +  \sum_{j=1}^J\sqrt{\nu^{(j)}}\sqrt{\frac{1-p^{(j)}}{p^{(j)}}} \mathbb{G}^{(j)} \left[\rho^{(j)}f-P_0(fV^T)\{P_0V^{\otimes 2}\}^{-1}\rho^{(j)}V\right].
\end{eqnarray*}
For asymptotic equicontinuity of $\mathbb{G}_N^{\textup{H,c}} =
\mathbb{G}_N^{\textup{H}}+(\mathbb{G}_N^{\textup{H,c}}-\mathbb{G}_N^{\textup{H}})$,
first note that $\mathbb{G}_N^{\textup{H}}$ is asymptotically
equicontinuous with respect to the metric $d^2(f,g) \equiv \upsilon(f-g,f-g)$
by Theorem \ref{thm:D}.
Recall from Theorem \ref{thm:D} that 
\begin{eqnarray*}
d^2(f,g) &=& \upsilon(f-g,f-g) \\
&=& \textup{Var}_0\{f(X)-g(X)\} + \sum_{j=1}^J \nu^{(j)}\frac{1-p^{(j)}}{p^{(j)}} \textup{Var}_{0}^{(j)}\{\rho^{(j)}(V)\{f(X)-g(X)\}.
\end{eqnarray*}
Because $\textup{Var}_0(f-g)$ in $d^2(f,g)$ corresponds to $\mathbb{G}_N$ in the decomposition (\ref{eqn:decompmult}), $\mathbb{G}_N^{\textup{H}}$ is still asymptotically equicontinuous with respect to the metric 
\begin{equation*}
d_{\textup{c}}^2(f,g) = P_0(f-g)^2  + \sum_{j=1}^{J}\sqrt{\nu^{(j)}}\sqrt{\frac{1-p^{(j)}}{p^{(j)}}}\textup{Var}_{0}^{(j)}(f-g),
\end{equation*}
which replaces $\textup{Var}_0(f-g)$ by $P_0(f-g)^2$ in $d^2(f,g)$, in view of 2.1.2 of \cite{MR1385671} and our assumption $\lVert P_0\rVert_{\mathcal{F}}<\infty$.
Define the class $\mathcal{F}_{\delta_N}\equiv \{f-g:f,g\in\mathcal{F},d_{\textup{c}}(f,g)\leq \delta_N\}$ of functions for an arbitrary sequence $\delta_N\downarrow 0$.
Proceeding in the same way as above using Taylor's theorem, $\lVert \mathbb{G}_N^{\textup{H,c}} - \mathbb{G}_N^{\textup{H}}\rVert_{\mathcal{F}_{\delta_N}}$ is bounded by
\begin{eqnarray*}
\lVert \mathbb{P}_N^{\textup{H}}-P_0\rVert_{\mathcal{F}_{\delta_N}} O_{P^*}(1) + \sup_{h\in\mathcal{F}_{\delta_N}}| P_0 \dot{G}(V^T\alpha^*)hV^T|O_{P^*}(1).
\end{eqnarray*}
for some $\alpha^*$ with $|\alpha^*-0|\leq |\hat{\alpha}_N^{\textup{c}}-0|$.
Since $\mathcal{F}_{\delta_N}$ is contained in a
$P_0$-Glivenko-Cantelli class for $N$ sufficiently large
(e.g. $\mathcal{F}_1$ when $\delta_N<1$), the fist term in the last
display is $o_{P^*}(1)$ by our Glivenko-Cantelli theorem (Theorem
\ref{thm:gc}). Applying the Cauchy-Schwarz inequality and then
the dominated convergence theorem, the second term is bounded by
$\dot{G}(0)P_0V^{\otimes 2}\lVert
P_0h^2\rVert_{\mathcal{F}_{\delta_N}}O_{P^*}(1)$.
Since $h=f-g\in\mathcal{F}_{\delta_N}$ and $d_{\textup{c}}(f,g)\rightarrow 0$, $P_0h^2 =P_0(f-g)^2\rightarrow 0$. This established asymptotic equicontinuity of $\mathbb{G}_N^{\textup{H,c}}$ with respect to the metric $d_{\textup{c}}$.

Next, we consider $\mathbb{G}_N^{\textup{H,sc}}$. 
Note that 
\begin{equation*}
\mathbb{P}_N^{\textup{H,sc}}f= \sum_{j=1}^J\frac{N^{(j)}}{N} \frac{N^{(j)}}{n^{(j)}}\mathbb{P}_{N^{(j)}}^{R(j)}G^{(j)}_{\hat{\alpha}^{\textup{sc},(j)}}\tilde{f}^{(j)}.
\end{equation*}
For finite-dimensional convergence, the decomposition (\ref{eqn:decompmult}) yields
\begin{eqnarray*}
\mathbb{G}_N^{\textup{H,sc}}f
&=& \mathbb{G}_N^{\textup{H}} f +\mathbb{G}_N^{\textup{H,sc}} f-\mathbb{G}_N^{\textup{H}} f\\
&=&\mathbb{G}_N^{\textup{H}} f+\sqrt{N}\sum_{j=1}^J\frac{N^{(j)}}{N}\frac{N^{(j)}}{n^{(j)}}\mathbb{P}_{N^{(j)}}^{R(j)}(G^{(j)}_{\hat{\alpha}_N^{\textup{sc},(j)}}-1)\tilde{f}^{(j)}.
\end{eqnarray*}
The first term $\mathbb{G}_N^{\textup{H}}$ weakly converges to $\mathbb{G}^{\textup{H}}$ by Theorem \ref{thm:D}.
The second term can be written as
\begin{eqnarray*}
\sqrt{N}\sum_{j=1}^J\frac{N^{(j)}}{N}\left\{\frac{N^{(j)}}{n^{(j)}}\left(\mathbb{P}_{N^{(j)}}^{R(j)}-\frac{n^{(j)}}{N^{(j)}}\mathbb{P}_{N^{(j)}}^{(j)}\right)+\mathbb{P}_{N^{(j)}}^{(j)}\right\}\left(G^{(j)}_{\hat{\alpha}_N^{\textup{sc},(j)}}-1\right)\tilde{f}^{(j)}
\end{eqnarray*}
Apply Taylor's theorem to obtain  
\begin{eqnarray*}
&&\sum_{j=1}^J\left\{
\sqrt{\frac{N^{(j)}}{N}}\frac{N^{(j)}}{n^{(j)}}\left\{\mathbb{P}_{N^{(j)}}^{R(j)}-\frac{n^{(j)}}{N^{(j)}}\mathbb{P}_{N^{(j)}}^{(j)}\right\}\dot{G}^{(j)}_{\alpha^{*,(j)}}\tilde{f}^{(j)}\left(\tilde{V}^{(j)}-\mathbb{P}_{N^{(j)}}^{(j)}\tilde{V}^{(j)}\right)^T
\right.\\
&&+\left.
\sqrt{\frac{N^{(j)}}{N}} \mathbb{P}_{N^{(j)}}^{(j)}\dot{G}^{(j)}_{\alpha^{*,(j)}}\tilde{f}^{(j)}\left(\tilde{V}^{(j)}-\mathbb{P}_{N^{(j)}}^{(j)}\tilde{V}^{(j)}\right)^T
\right\}\sqrt{N^{(j)}}\hat{\alpha}_N^{\textup{sc},(j)}
\end{eqnarray*}
for some $\alpha^{*,(j)}$ with $|\alpha^{*,(j)}-0|\leq |\hat{\alpha}_N^{\textup{sc},(j)}-0|$.
The first term in the summation is $o_{P^*}(1)$ because we can apply the bootstrap Glivenko-Cantelli theorem to $\mathbb{P}_{N^{(j)}}^{R(j)}-(n^{(j)}/N^{(j)})\mathbb{P}_{N^{(j)}}^{(j)}$ as in the proof of Theorem \ref{thm:gc}.
For the second term, note that $\mathbb{P}_{N^{(j)}}^{(j)}$ is the empirical measure conditional on $V\in\mathcal{V}^{(j)}$. Thus, we can apply the usual Glivenko-Cantelli theorem and then the dominated convergence theorem together with  $N^{(j)}/N\rightarrow \nu^{(j)}$ to see the last display is
\begin{equation*}
 \dot{G}(0)\sum_{j=1}^J\sqrt{\nu^{(j)}}P_0^{(j)}\tilde{f}^{(j)}\left(\tilde{V}^{(j)}-P_0^{(j)}\tilde{V}^{(j)}\right)^T\sqrt{N^{(j)}}\hat{\alpha}_N^{\textup{sc},(j)} + o_{P^*}(1). 
\end{equation*} 
It follows from Proposition \ref{prop:alphaCal} that this is
\begin{eqnarray*} 
&&-\sum_{j=1}^J\sqrt{\nu^{(j)}}\sqrt{\frac{1-p^{(j)}}{p^{(j)}}}P_0^{(j)}\left\{\tilde{f}^{(j)}\left(\tilde{V}^{(j)}-P_0^{(j)}\tilde{V}^{(j)}\right)^T\right\}\left\{\textup{Var}_{0}^{(j)}\left(\tilde{V}^{(j)}\right)\right\}^{-1}\\
&&\qquad \qquad\times\mathbb{G}^{(j)}\left(\tilde{V}^{(j)}-P_0^{(j)}\tilde{V}^{(j)}\right) + o_{P^*}(1) \\
&=& -\sum_{j=1}^J\sqrt{\nu^{(j)}}\sqrt{\frac{1-p^{(j)}}{p^{(j)}}}\mathbb{G}^{(j)}Q_{\textup{sc}}^{(j)}f+ o_{P^*}(1),
\end{eqnarray*}
Combine this with $\mathbb{G}_N^{\textup{H}} f= \mathbb{G}^{\textup{H}} f+o_{P^*}(1)$ and apply Theorem \ref{thm:D} to conclude $\mathbb{G}_N^{\textup{H,sc}}f\rightsquigarrow \mathbb{G}^{\textup{H,sc}}f$.
The asymptotic equicontinuity of $\mathbb{G}_N^{\textup{H,sc}}$ with respect to $d_{\textup{c}}$ can be proved in a similar way to that of $\mathbb{G}_N^{\textup{H,c}}$.
\end{proof}

\section{Infinite Dimensional $M$-Estimation}
\begin{proof}[Proof of Theorem \ref{thm:zthm1}]
We prove the claim for $\hat{\theta}_{N,\textup{sc}}$.
First, Theorem \ref{thm:calD} together with Condition \ref{cond:zthm1-1} yields
\begin{equation*}
\mathbb{G}^{\textup{H,sc}}_NB_{\theta_0}
\rightsquigarrow
\mathbb{G}^{\textup{H,sc}}B_{\theta_0}, \quad \mbox{in }\ell^\infty(\mathcal{H}).
\end{equation*}

For a fixed arbitrary sequence $\{\delta_N\}$ with $\delta_N\rightarrow 0$, let
\begin{eqnarray*}
\mathcal{D}_N
&\equiv&\left\{\frac{B_\theta(h)-B_{\theta_0}(h)}{1+\sqrt{N}\lVert\theta-\theta_0\rVert}:h\in\mathcal{H},\lVert\theta-\theta_0\rVert \leq \delta_N\right\}\\
&\equiv&\left\{B_N(\theta,\theta_0)[h]:h\in\mathcal{H},\lVert\theta-\theta_0\rVert \leq \delta_N\right\}.
\end{eqnarray*}
Condition \ref{cond:zthm1-3} yields $\lVert
\mathbb{G}_N\rVert_{\mathcal{D}_N}=o_{P^*}(1)$, which implies $E\lVert \mathbb{G}_N\rVert_{\mathcal{D}_N}=o(1)$ as $N\rightarrow \infty$ by Lemma \ref{lemma:vw239} since  $E^*\lVert \delta_X-P_0\rVert_{\mathcal{D}_N} \leq 2E^*\sup_{\theta\in\Theta,h\in\mathcal{H}} |(\delta_X-P_0)B_\theta(h)|<\infty$ by Condition \ref{cond:zthm1-1} regarding integrable envelope.
It follows by Lemma \ref{lemma:gpleqg} that $E\lVert \mathbb{G}_N^{\textup{H}}\rVert_{\mathcal{D}_N}=o(1)$ and hence
$\lVert \mathbb{G}_N^{\textup{H}}\rVert_{\mathcal{D}_N}=o_{P^*}(1)$ by Markov's inequality.
Since $\mathcal{D}_N$ is $P_0$-Glivenko-Cantelli, apply Taylor's theorem as in the proof of Theorem \ref{thm:calD} and then the dominated convergence theorem to obtain $\lVert \mathbb{G}_N^{\textup{H,sc}}f-\mathbb{G}_N^{\textup{H}}\rVert_{\mathcal{D}_N}=o_{P^*}(1)$.
Thus, consistency of $\hat{\theta}_{N,\textup{sc}}$ to $\theta_0$ and Condition \ref{cond:zthm1-3} imply that
\begin{eqnarray}
\label{eqn:zthmaec}
\lVert \mathbb{G}_N^{\textup{H,sc}} (B_{\hat{\theta}_{N,\textup{sc}}}-B_{\theta_0})\rVert_{\mathcal{H}} = o_{P^*}(1+\sqrt{N}\lVert \hat{\theta}_{N,\textup{sc}}-\theta_0\rVert).
\end{eqnarray}

We prove $\sqrt{N}\lVert \hat{\theta}_{N,\textup{sc}}-\theta_0\rVert =O_{P^*}(1)$.
Because  $\lVert \mathbb{P}_N^{\textup{H,sc}}B_{\hat{\theta}_{N,\textup{sc}}}\rVert_{\mathcal{H}}=o_{P^*}(N^{-1/2})$ and $P_0B_{\theta_0} =0$, it follows from (\ref{eqn:zthmaec}) that
\begin{eqnarray}
&&\sqrt{N}(\Psi(\hat{\theta}_{N,\textup{sc}})-\Psi(\theta_0))\nonumber\\
&&=\sqrt{N} (\Psi(\hat{\theta}_{N,\textup{sc}}) -\Psi^{\textup{H,sc}}_N (\hat{\theta}_{N,\textup{sc}}) ) + o_{P^*}(1) \nonumber\\
&&=-\sqrt{N} ( \Psi^{\textup{H,sc}}_N (\theta_{0})-\Psi(\theta_{0}) ) + o_{P^*}(1+\sqrt{N}\lVert \hat{\theta}_{N,\textup{sc}}-\theta_0\rVert). \label{eqn:zthmaec2}
\end{eqnarray}
Since the continuous invertibility of $\dot{\Psi}_0$ at $\theta_0$ implies that there is some constant $c>0$ such that
$(c+o_P(1))\lVert \hat{\theta}_{N,\textup{sc}}-\theta_0\rVert \leq \lVert \Psi(\hat{\theta}_{N,\textup{sc}})-\Psi(\theta_0)\rVert_{\mathcal{H}}$,
we have
\begin{eqnarray*}
&&(c+o_P(1))\sqrt{N}\lVert \hat{\theta}_{N,\textup{sc}}-\theta_0\rVert
\leq \lVert \sqrt{N}(\Psi(\hat{\theta}_{N,\textup{sc}})-\Psi(\theta_0))\rVert_{\mathcal{H}}\\
&&\leq \lVert \mathbb{G}_N^{\textup{H,sc}} B_{\theta_0}\rVert_{\mathcal{H}}
+o_{P^*}(1+\sqrt{N}\lVert \hat{\theta}_{N,\textup{sc}}-\theta_0\rVert).
\end{eqnarray*}
Since $\lVert \mathbb{G}_N^{\textup{H,sc}} B_{\theta_0}\rVert_{\mathcal{H}}=O_{P^*}(1)$ by Condition \ref{cond:zthm1-1} and Theorem \ref{thm:D},
the claim $\sqrt{N}\lVert \hat{\theta}_{N,\textup{sc}}-\theta_0\rVert =O_{P^*}(1)$ follows.

We prove the asymptotic normality of $\hat{\theta}_{N,\textup{sc}}$.
It follows from the Fr\'{e}chet differentiability of $\Psi$ and $\sqrt{N}$-consistency of $\hat{\theta}_{N,\textup{sc}}$ that (\ref{eqn:zthmaec2}) becomes
\begin{equation}
\label{eqn:wz414dc}
\sqrt{N}\dot{\Psi}_0(\hat{\theta}_{N,\textup{sc}}-\theta_{0})
=-\mathbb{G}_N^{\textup{H,sc}} B_{\theta_0} +o_{P^*}(1).
\end{equation}
Continuity of the inverse $\dot{\Psi}_0^{-1}$ of $\dot{\Psi}_0$, the continuous mapping theorem and weak convergence of $\mathbb{G}_N^{\textup{H,sc}}$ by Theorem \ref{thm:calD} yield the weak convergence of $\sqrt{N}(\hat{\theta}_{N,\textup{sc}}-\theta_0)$:
$\sqrt{N}(\hat{\theta}_{N,\textup{sc}}-\theta_{0})
=-\dot{\Psi}_0^{-1}\mathbb{G}^{\textup{H,sc}} B_{\theta_0}+o_{P^*_1}(1)$.
This establishes the theorem for $\hat{\theta}_{N,\textup{sc}}$. 
Proofs for other cases are similar and omitted.
\end{proof}

\begin{proof}[Proof of Theorem \ref{thm:zthm2}]
For $\hat{\theta}_{N,\textup{sc}}$, we have
\begin{eqnarray*}
&&\sqrt{N}\mathbb{P}_N^{\textup{H,sc}}\dot{\ell}_{0} 
+\sqrt{N}P_0\dot{\ell}_{\hat{\theta}_{N,\textup{sc}},\hat{\eta}_{N,\textup{sc}}}=o_{P^*}(1),\\
&&\sqrt{N}\mathbb{P}_N^{\textup{H,sc}}B_{0}\left[\underline{h}_0\right]
+\sqrt{N}P_0B_{\hat{\theta}_{N,\textup{sc}},\hat{\eta}_{N,\textup{sc}}}[\underline{h}_0]=o_{P^*}(1).
\end{eqnarray*}
To see this, note that $\sqrt{N}\mathbb{P}_N^{\textup{H,sc}}\dot{\ell}_{0} 
  +\sqrt{N}P_0\dot{\ell}_{\hat{\theta}_{N,\textup{sc}},\hat{\eta}_{N,\textup{sc}}}
=-\mathbb{G}_N^{\textup{H,sc}}( \dot{\ell}_{\hat{\theta}_{N,\textup{sc}},\hat{\eta}_{N,\textup{sc}}}-\dot{\ell}_{0}) +o_{P^*}(1)$ because $\mathbb{P}_N^{\textup{H,sc}}\dot{\ell}_{\hat{\theta}_{N,\textup{sc}},\hat{\eta}_{N,\textup{sc}}} = o_{P^*}(N^{-1/2})$ 
by assumption and $P_0\dot{\ell}_{0}=0$.
Let $\delta_N\downarrow 0$ be arbitrary and define 
$\mathcal{F}_N \equiv \{\dot{\ell}_{\theta,\eta}-\dot{\ell}_{0}:
    |\theta-\theta_0|\leq \delta_N,\lVert\eta-\eta_0\rVert\leq N^{-\beta}\}$. 
Apply Lemma \ref{lemma:wza5} with Condition \ref{cond:wlenonreg3} to obtain 
$\lVert \mathbb{G}_N^{\textup{H,sc}}\rVert_{\mathcal{F}_N}=o_{P^*}(1)$. 

It follows from Condition \ref{cond:wlenonreg4} that
\begin{eqnarray}
\label{eqn:huanga}
&&P_0\left[-\dot{\ell}_{0}\left\{\dot{\ell}_{0}^T(\hat{\theta}_{N,\textup{sc}}-\theta_0)
        +B_{0}(\hat{\eta}_{N,\textup{sc}}-\eta_0)\right\}\right]\nonumber\\
&&\quad + \ o\left(|\hat{\theta}_{N,\textup{sc}}-\theta_0|\right)+O\left(\lVert \hat{\eta}_{N,\textup{sc}}-\eta_0\rVert^{\alpha}\right) 
          +\mathbb{P}_N^{\textup{H,sc}}\dot{\ell}_{0}\nonumber\\
&&=P_0\left[-\dot{\ell}_{0}\left\{\dot{\ell}_{0}^T(\hat{\theta}_{N,\textup{sc}}-\theta_0)
          +B_{0}(\hat{\eta}_{N,\textup{sc}}-\eta_0)\right\} -\dot{\ell}_{\hat{\theta}_{N,\textup{sc}},\hat{\eta}_{N,\textup{sc}}}
            +\dot{\ell}_{0}\right]\nonumber\\
&&\quad +\ o\left(|\hat{\theta}_{N,\textup{sc}}-\theta_0|\right)+O\left(\lVert \hat{\eta}_{N,\textup{sc}}-\eta_0\rVert^{\alpha}\right) 
        +P_0\dot{\ell}_{\hat{\theta}_{N,\textup{sc}},\hat{\eta}_{N,\textup{sc}}}+\mathbb{P}_N^{\textup{H,sc}}\dot{\ell}_{0}\nonumber\\
&& =o_{P^*}(N^{-1/2}), 
\end{eqnarray}
and, furthermore, that 
\begin{eqnarray}
\label{eqn:huangb}
&&P_0\left[-B_{0}\left[\underline{h}_0\right]\left\{\dot{\ell}_{0}^T(\hat{\theta}_{N,\textup{sc}}-\theta_0)
       +B_{0}(\hat{\eta}_{N,\textup{sc}}-\eta_0)\right\}\right]\nonumber \\
&& \quad + \ o\left(|\hat{\theta}_{N,\textup{sc}}-\theta_0|\right)+O\left(\lVert \hat{\eta}_{N,\textup{sc}}-\eta_0\rVert^{\alpha}\right)
        +\mathbb{P}_N^{\textup{H,sc}}B_{0}\left[\underline{h}_0\right]\nonumber\\
&&=o_{P^*}(N^{-1/2}).
\end{eqnarray}
Taking the difference of (\ref{eqn:huanga}) and (\ref{eqn:huangb}) yields
\begin{eqnarray*}
&&-P_0\left(\left\{\dot{\ell}_{0}-B_{0}
       \left[\underline{h}_0\right]\right\}\dot{\ell}_{0}^T\right)\left(\hat{\theta}_{N,\textup{sc}}-\theta_0\right)
+ \ o\left(|\hat{\theta}_{N,\textup{sc}}-\theta_0|\right) \\
&& \quad+ o_{P^*}(N^{-1/2})- o_{P^*}(N^{-1/2})
       +\mathbb{P}_N^{\textup{H,sc}}\left(\dot{\ell}_{0}-B_{0}\left[\underline{h}_0\right]\right)\\ &&
=o_{P^*}(N^{-1/2})-o_{P^*}(N^{-1/2}),
\end{eqnarray*}
or
\begin{equation*}
-I_0 (\hat{\theta}_{N,\textup{sc}}-\theta_0) 
=\mathbb{P}_N^{\textup{H,sc}}\left(\dot{\ell}_{0}
   -B_{0}\left[\underline{h}_0\right]\right) 
    +o_{P^*}(N^{-1/2}).
\end{equation*}
Here we used Condition \ref{cond:wlenonreg2}  and the fact that $\sqrt{N}O_{P^*}\left(\lVert \hat{\eta}_N-\eta_0\rVert^\alpha\right)=o_{P^*}(1)$ (Condition \ref{cond:wlenonreg1} and $\alpha\beta >1/2$). 

It follows from the invertibility of $I_0$ and the definition of the efficient influence function that 
\begin{eqnarray*}
\sqrt{N}\left(\hat{\theta}_{N,\textup{sc}}-\theta_0\right)
=-\sqrt{N}\mathbb{P}_N^{\textup{H,sc}}I_0^{-1}\tilde{\ell}_{0}+o_{P^*}(1).
\end{eqnarray*}
Apply Theorem \ref{thm:calD} to complete the proof.
\end{proof}
\begin{proof}[Proof of Theorem \ref{thm:const}]
This follows from Corollary 3.2.3 of \cite{MR1385671}.
\end{proof}

\begin{proof}[Proof of Theorem \ref{thm:rate}]
Apply Lemma \ref{lemma:gpleqg} to obtain
$
E^*\left\lVert \mathbb{G}_N^{\textup{H}}\right\rVert_{\mathcal{M}_\delta}
\lesssim
E^*\left\lVert \mathbb{G}_N\right\rVert_{\mathcal{M}_\delta}
\leq \phi_N(\delta),
$
and then apply Theorem 3.2.5 of \cite{MR1385671}.
\end{proof}

\section{Stratified Sampling}
\label{sec:st}
In this section, we consider stratified sampling at the second stage where each dataset from a source is obtained by stratified sampling without replacement.
Each source $\mathcal{V}^{(j)}$ is partitioned into disjoint strata. For source $j$, there are $K_j$ strata $\mathcal{S}_1^{(j)},\ldots, \mathcal{S}_{K_j}^{(j)}$ with $\sum_{k=1}^{K_j} \mathcal{S}_k^{(j)} = \mathcal{V}_j$.
The $k$th stratum $S^{(j)}_k$ in source $j$ consists of $N_{k}^{(j)}$ units with $\sum_{k=1}^{K_j} N_{k}^{(j)} = N^{(j)}$.
A subsample of size  $n_k^{(j)}$ is drawn without replacement from the stratum $\mathcal{S}_k^{(j)}$.
With the sampling indicator $R_i^{(j)}$ for source $j$, sampling probability for unit $i$ is $\pi^{(j)}(V_i)=n_k^{(j)}/N_k^{(j)}$ if $V_i \in \mathcal{S}_k^{(j)}$.
We assume $\pi^{(j)}(v) \geq c>0$ for some constant $c$ and $\pi^{(j)}(v)\rightarrow p_k^{(j)}$ for $v\in\mathcal{S}_k^{(j)}$ as $N\rightarrow \infty$ for $k=1,\ldots,K_j$ with $j=1,\ldots,J$. 
We write $\nu_k^{(j)}\equiv P(V\in \mathcal{S}_k^{(j)}|V\in \mathcal{V}_j)$, and $P_{0|k}^{(j)} (\cdot) \equiv \tilde{P}_{0}(\cdot|V \in \mathcal{S}_k^{(j)})$.
The H-empirical measure $\mathbb{P}_N^{\textup{H}}$ and process $\mathbb{G}_N^{\textup{H}}$ are defined in the same way. 

The following are uniform LLN and CLT for merged data with stratified sampling. Proofs are similar to those for simple random sampling with the help of asymptotic results in \cite{MR2325244, MR3059418}  and omitted.
\begin{thm}
\label{thm:stGC}
Suppose $\mathcal{F}$ is  $P_0$-Glivenko-Cantelli.
Then 
\begin{equation*}
\lVert \mathbb{P}_N^{\textup{H}} - \tilde{P}_0\rVert_{\mathcal{F}} = \sup_{f\in\mathcal{F}}|(\mathbb{P}_N^{\textup{H}} -\tilde{P}_0)f|\rightarrow_{P^*} 0.
\end{equation*}
\end{thm}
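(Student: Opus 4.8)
The plan is to follow the proof of Theorem~\ref{thm:gc} closely, refining the two--stage decomposition (\ref{eqn:decompmult}) so that the finite--sampling piece of each source is split further into its disjoint strata. Write $\mathbb{P}_{N_k^{(j)}}^{R(j)}$ and $\mathbb{P}_{N_k^{(j)}}^{(j)}$ for the finite--sampling and ordinary empirical measures restricted to the $k$th stratum of source $j$, put $\mathbb{G}_{N_k^{(j)}}^{R(j)} \equiv \sqrt{N_k^{(j)}}\,\bigl(\mathbb{P}_{N_k^{(j)}}^{R(j)} - (n_k^{(j)}/N_k^{(j)})\,\mathbb{P}_{N_k^{(j)}}^{(j)}\bigr)$, and let $\tilde f^{(j)}(x,v) = \rho^{(j)}(v)f(x) \in \tilde{\mathcal F}_j$ as before. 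Splitting the inner sum over strata exactly as in (\ref{eqn:decompmult}) gives
\begin{equation*}
\mathbb{G}_N^{\textup{H}} f = \mathbb{G}_N f + \sum_{j=1}^J \sum_{k=1}^{K_j} \sqrt{\frac{N_k^{(j)}}{N}}\,\frac{N_k^{(j)}}{n_k^{(j)}}\, \mathbb{G}_{N_k^{(j)}}^{R(j)} \tilde f^{(j)},
\end{equation*}
so by the triangle inequality it suffices to bound, uniformly over $\mathcal{F}$, the i.i.d.\ term $\lVert \mathbb{P}_N - \tilde P_0\rVert_{\mathcal F}$ and each stratum--source term $\frac{N_k^{(j)}}{N}\frac{N_k^{(j)}}{n_k^{(j)}}\bigl\lVert \mathbb{P}_{N_k^{(j)}}^{R(j)} - (n_k^{(j)}/N_k^{(j)})\,\mathbb{P}_{N_k^{(j)}}^{(j)}\bigr\rVert_{\tilde{\mathcal F}_j}$.

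The i.i.d.\ term is $o_{P^*}(1)$ by the classical Glivenko--Cantelli theorem. For the stratum--source terms, note first $N_k^{(j)}/N \rightarrow_{P^*} \nu^{(j)}\nu_k^{(j)}$ by the weak law of large numbers and $n_k^{(j)}/N_k^{(j)} \rightarrow p_k^{(j)} > 0$ by assumption, so the scaling factors are bounded in probability. It then remains to show $\bigl\lVert \mathbb{P}_{N_k^{(j)}}^{R(j)} - (n_k^{(j)}/N_k^{(j)})\,\mathbb{P}_{N_k^{(j)}}^{(j)}\bigr\rVert_{\tilde{\mathcal F}_j} = o_{P^*}(1)$. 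As in the proof of Theorem~\ref{thm:gc}, within a fixed stratum the sampling indicators are exchangeable bootstrap weights satisfying (3.6.8) of \cite{MR1385671} (simple random sampling of $n_k^{(j)}$ out of $N_k^{(j)}$ units), so the unconditional version of the bootstrap Glivenko--Cantelli theorem (Lemma 3.6.16 of \cite{MR1385671}, with the conditional multiplier inequality replaced by the unconditional one of Lemma 3.6.7) applies provided $\tilde{\mathcal F}_j$ is Glivenko--Cantelli. The latter follows from the Glivenko--Cantelli preservation theorem (Proposition~2 of \cite{MR1857319}): $\mathcal{F}$ is $\tilde P_0$--Glivenko--Cantelli, $\rho^{(j)}$ is bounded and measurable, and further intersecting with the fixed measurable set $\mathcal{S}_k^{(j)}$ preserves the property. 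Summing the $J+1$ pieces yields $\lVert \mathbb{P}_N^{\textup{H}} - \tilde P_0\rVert_{\mathcal F} \rightarrow_{P^*} 0$.

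An alternative, and perhaps cleaner, route exploits that the strata of a given source are disjoint: the vector of stratum--wise processes $\bigl(\mathbb{G}_{N_1^{(j)}}^{R(j)},\ldots,\mathbb{G}_{N_{K_j}^{(j)}}^{R(j)}\bigr)$ is precisely the stratified--sampling empirical process studied in \cite{MR2325244,MR3059418}, so their uniform law of large numbers applies verbatim source by source, and the cross--source structure enters only through the outer sum, whose uniform norm is controlled by the triangle inequality. The main obstacle here is essentially bookkeeping rather than a new probabilistic difficulty: one must organize the double index $(j,k)$ carefully and, in particular, check that the unconditional bootstrap Glivenko--Cantelli argument that was the technical heart of Theorem~\ref{thm:gc} still goes through stratum by stratum. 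Since stratified sampling without replacement is itself an exchangeably weighted bootstrap scheme and the strata within a source are disjoint --- so that the only cross--stratum, cross--source dependence is that carried by the common data $\underline{X}_N,\underline{V}_N$, which the decomposition already isolates --- no input beyond \cite{MR1385671,MR2325244,MR3059418} is needed, which is why the detailed verification can be safely omitted.
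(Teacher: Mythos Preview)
Your proposal is correct and matches the paper's intended argument: the paper omits the proof, stating only that it is ``similar to those for simple random sampling with the help of asymptotic results in \cite{MR2325244,MR3059418},'' and you carry out precisely that --- refining the decomposition (\ref{eqn:decompmult}) stratum by stratum and invoking the bootstrap Glivenko--Cantelli theorem per stratum, while also noting the alternative of applying the stratified--sampling U-LLN of \cite{MR2325244,MR3059418} directly within each source.
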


\begin{thm}
\label{thm:stD}
Suppose $\mathcal{F}$ is $P_0$-Donsker.
Then 
\begin{equation*}
\mathbb{G}_N^{\textup{H}}(\cdot) \rightsquigarrow \mathbb{G}^{\textup{H}}(\cdot) \equiv \mathbb{G}(\cdot) + \sum_{j=1}^J \sum_{k=1}^{K_j}\sqrt{\nu_k^{(j)}} \sqrt{\frac{1-p_k^{(j)}}{p_k^{(j)}}}\mathbb{G}_{k}^{(j)}(\rho^{(j)}\cdot)
\end{equation*}
in $\ell^{\infty}(\mathcal{F})$  where the $P_0$-Brownian bridge process $\mathbb{G}$ and the $P_{0|k}^{(j)}$-Brownian bridge processes $\mathbb{G}_{k}^{(j)}$ are independent.
The covariance function $\upsilon^{\textup{st}}(\cdot,\cdot) = \textup{Cov}(\mathbb{G}^{\textup{H}} \cdot,\mathbb{G}^{\textup{H}} \cdot)$ on $\mathcal{F}\times \mathcal{F}$ is given by
\begin{eqnarray*}
&&\upsilon^{\textup{st}}(f,g)
=P_0(f-P_0f)(g-P_0g)^T\\
&&+ \sum_{j=1}^J \sum_{k=1}^{K_j}\nu_k^{(j)}\frac{1-p_k^{(j)}}{p_k^{(j)}}P_{0|k}^{(j)}\left(\rho^{(j)} f-P_{0|k}^{(j)}\rho^{(j)}f\right)\left(\rho^{(j)}g-P_{0|k}^{(j)}\rho^{(j)}g\right)^T.
\end{eqnarray*}
In particular, the asymptotic variance of $\mathbb{G}_N^{\textup{H}}f$ is
\begin{equation*}
\upsilon^{\textup{st}}(f,f) = \textup{Var}_{0}(f) + \sum_{j=1}^J \sum_{k=1}^{K_j}\nu_k^{(j)}\frac{1-p_k^{(j)}}{p_k^{(j)}}\textup{Var}_{0|k}^{(j)}\left(\rho^{(j)} f\right).
\end{equation*}
where $\textup{Var}_{0|k}^{(j)}$ is the variance under  $P_{0|k}^{(j)}$.
\end{thm}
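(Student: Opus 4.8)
The plan is to mimic the proof of Theorem~\ref{thm:D} with one extra layer of decomposition that splits each source-term over its (disjoint) strata. For source $j$ and stratum $k$, enumerate the $N_k^{(j)}$ units of $\mathcal{S}_k^{(j)}$ and set
\begin{equation*}
\mathbb{P}_{N_k^{(j)}}^{R(j),k}\equiv\frac{1}{N_k^{(j)}}\sum_{i=1}^{N_k^{(j)}}R^{(j)}_{(j,k),i}\,\delta_{(X_{(j,k),i},V_{(j,k),i})},\qquad
\mathbb{G}_{N_k^{(j)}}^{R(j),k}\equiv\sqrt{N_k^{(j)}}\Bigl(\mathbb{P}_{N_k^{(j)}}^{R(j),k}-\frac{n_k^{(j)}}{N_k^{(j)}}\mathbb{P}_{N_k^{(j)}}^{(j),k}\Bigr),
\end{equation*}
where $\mathbb{P}_{N_k^{(j)}}^{(j),k}$ is the empirical measure restricted to $\mathcal{S}_k^{(j)}$. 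Since $\mathcal{V}_j=\sum_{k=1}^{K_j}\mathcal{S}_k^{(j)}$ is a disjoint union and $\sum_{j=1}^J\rho^{(j)}(v)=1$ for every $v$, the algebra leading to (\ref{eqn:decompmult}) produces
\begin{equation*}
\mathbb{G}_N^{\textup{H}}f=\mathbb{G}_N f+\sum_{j=1}^J\sum_{k=1}^{K_j}\sqrt{\frac{N_k^{(j)}}{N}}\,\frac{N_k^{(j)}}{n_k^{(j)}}\,\mathbb{G}_{N_k^{(j)}}^{R(j),k}\tilde{f}^{(j)},\qquad \tilde{f}^{(j)}=\rho^{(j)}f\in\tilde{\mathcal{F}}_j.
\end{equation*}

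Next I would handle the summands exactly as in the proof of Theorem~\ref{thm:D}. The leading term $\mathbb{G}_N$ converges weakly to the $P_0$-Brownian bridge $\mathbb{G}$ by the usual Donsker theorem. Each $\tilde{\mathcal{F}}_j=\{\rho^{(j)}f:f\in\mathcal{F}\}$ is Donsker because $\mathcal{F}$ is Donsker and $\rho^{(j)}$ is bounded and measurable (Example~2.10.10 of \cite{MR1385671}); the indicators $(R^{(j)}_{(j,k),i})_{i\le N_k^{(j)}}$ drawn inside stratum $k$ of source $j$ are exchangeable and satisfy condition (3.6.8) of \cite{MR1385671}, so $\mathbb{G}_{N_k^{(j)}}^{R(j),k}$ is an exchangeably weighted bootstrap empirical process and the unconditional form of Theorem~3.6.13 of \cite{MR1385671} (as used for Theorem~\ref{thm:D}) gives $\mathbb{G}_{N_k^{(j)}}^{R(j),k}\rightsquigarrow\sqrt{p_k^{(j)}(1-p_k^{(j)})}\,\mathbb{G}_k^{(j)}$ in $\ell^\infty(\tilde{\mathcal{F}}_j)$ with $\mathbb{G}_k^{(j)}$ a $P_{0|k}^{(j)}$-Brownian bridge; the same statement follows from the two-phase stratified results of \cite{MR2325244,MR3059418}. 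With $N_k^{(j)}/N\rightarrow_{P^*}\nu_k^{(j)}$ and $n_k^{(j)}/N_k^{(j)}\rightarrow p_k^{(j)}>0$ by assumption, the deterministic prefactors converge, so the $(j,k)$-summand converges weakly to $\sqrt{\nu_k^{(j)}}\sqrt{(1-p_k^{(j)})/p_k^{(j)}}\,\mathbb{G}_k^{(j)}(\rho^{(j)}\cdot)$, regarded in $\ell^\infty(\mathcal{F})$ via the identification of $\tilde{\mathcal{F}}_j$ with $\mathcal{F}$.

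It remains to pass from these marginal statements to joint weak convergence and to identify the covariance, i.e.\ to show $\mathbb{G}$ and the $\mathbb{G}_k^{(j)}$ are mutually independent. As in Theorem~\ref{thm:D}, marginal convergence in $\ell^\infty(\mathcal{F})$ together with Gaussianity reduces this to checking that, for arbitrary $f,g\in\mathcal{F}$, $\textup{Cov}(\mathbb{G}_Nf,\mathbb{G}_{N_k^{(j)}}^{R(j),k}g)$ and $\textup{Cov}(\mathbb{G}_{N_k^{(j)}}^{R(j),k}f,\mathbb{G}_{N_{k'}^{(j')}}^{R(j'),k'}g)$ for $(j,k)\ne(j',k')$ both vanish in the limit. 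Conditioning on $(\underline{X}_N,\underline{V}_N)$ and using the law of total covariance, the conditional-mean pieces vanish because $E[R^{(j)}_{(j,k),i}\mid\underline{X}_N,\underline{V}_N]=n_k^{(j)}/N_k^{(j)}$, and the conditional-covariance pieces vanish because sampling from different sources is conditionally independent and, within a source, subsamples from the disjoint strata are drawn independently; hence all cross-covariances are $0$, Gaussianity yields independence, and the formulas for $\upsilon^{\textup{st}}$ and for $\upsilon^{\textup{st}}(f,f)$ follow by collecting terms (set $f=g$ for the variance). The U-LLN of Theorem~\ref{thm:stGC} is obtained along the same decomposition by substituting the bootstrap Glivenko--Cantelli theorem (Lemma~3.6.16 of \cite{MR1385671}) for the bootstrap CLT, stratum by stratum, as in the proof of Theorem~\ref{thm:gc}.

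I expect the main obstacle to be the structural bookkeeping of the previous paragraph: one must confirm that the added stratification layer neither breaks the exchangeable-bootstrap representation of each stratum-wise process nor creates new dependence---concretely, that within a source the stratum subsamples are mutually independent and that across sources conditional independence given the data persists. Once that is in place, the remaining pieces (Donsker preservation under multiplication by $\rho^{(j)}$, the law-of-large-numbers behaviour of the random stratum sizes $N_k^{(j)}$, and the assembly of the covariance) are routine and parallel Theorems~\ref{thm:gc}--\ref{thm:D} and the stratified two-phase arguments of \cite{MR2325244,MR3059418}.
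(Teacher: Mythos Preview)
Your proposal is correct and follows essentially the same route the paper has in mind: the paper does not give a detailed proof of Theorem~\ref{thm:stD} but states that it is ``similar to those for simple random sampling with the help of asymptotic results in \cite{MR2325244, MR3059418},'' which is precisely what you do---replay the decomposition and bootstrap argument of Theorem~\ref{thm:D} with an extra stratum layer, and invoke the stratified two-phase machinery for the within-source pieces. The one structural point you correctly flag as needing care---that the stratum subsamples within a source are drawn independently, so the law-of-total-covariance argument still kills all cross terms---is exactly the new ingredient relative to Theorem~\ref{thm:D}.
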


\section{Auxiliary results}
\begin{lemma}
\label{lemma:gpleqg}
For an arbitrary set $\mathcal{F}$ of integrable functions,
\begin{equation*}
E^*\left\lVert \mathbb{G}_N^{\textup{H}}\right\rVert_{\mathcal{F}}
\lesssim E^*\left\lVert \mathbb{G}_N\right\rVert_{\mathcal{F}},
\end{equation*}
where $a\lesssim b$ means $a \leq K b$ for some constant $K \in (0,\infty)$.
\end{lemma}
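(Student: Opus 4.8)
The plan is to start from the decomposition~(\ref{eqn:decompmult}),
\[
\mathbb{G}_N^{\textup{H}} f = \mathbb{G}_N f + \sum_{j=1}^J \sqrt{\frac{N^{(j)}}{N}}\,\frac{N^{(j)}}{n^{(j)}}\,\mathbb{G}_{N^{(j)}}^{R(j)}\tilde{f}^{(j)},\qquad \tilde{f}^{(j)}=\rho^{(j)}f\in\tilde{\mathcal F}_j,
\]
take the supremum over $f\in\mathcal F$ and apply the triangle inequality. Since $\pi^{(j)}(\cdot)\ge c>0$ forces $N^{(j)}/n^{(j)}\le 1/c$ and $N^{(j)}/N\le 1$, the multipliers in front of the finite-sampling processes are bounded by the deterministic constant $1/c$, so it suffices to prove, for each fixed $j$,
\[
E^*\big\lVert \mathbb{G}_{N^{(j)}}^{R(j)}\big\rVert_{\tilde{\mathcal F}_j}\ \lesssim\ E^*\big\lVert \mathbb{G}_N\big\rVert_{\mathcal F}.
\]

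To do this I would treat $\mathbb{G}_{N^{(j)}}^{R(j)}$ as an exchangeably weighted bootstrap empirical process over $\tilde{\mathcal F}_j$ indexed by the data points of source $j$, with the indicators $(R^{(j)}_{(j),1},\dots,R^{(j)}_{(j),N^{(j)}})$ as bounded, mean-zero, exchangeable weights, exactly as in the proofs of Theorems~\ref{thm:gc} and~\ref{thm:D}. Conditioning on the data, a symmetrization step reduces the conditional expected supremum to a Rademacher-weighted process indexed by $\tilde{\mathcal F}_j$ at the source-$j$ points; because $\tilde{f}^{(j)}=\rho^{(j)}f$ with $\rho^{(j)}$ valued in $[0,1]$ and vanishing off $\mathcal V^{(j)}$, pointwise multiplication by $\rho^{(j)}$ is a contraction fixing $0$, so the Ledoux--Talagrand contraction principle strips $\rho^{(j)}$ and replaces $\tilde{\mathcal F}_j$ by $\mathcal F$ up to an absolute constant; multiplication by $I\{\,\cdot\in\mathcal V^{(j)}\}$ is likewise a contraction, which lets me extend the Rademacher sum from the source-$j$ indices to all $N$ indices. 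Taking expectation over the data and desymmetrizing then bounds the result by a constant multiple of $E^*\lVert\mathbb{G}_N\rVert_{\mathcal F}$, using the unconditional multiplier inequality (Lemma 3.6.7 of~\cite{MR1385671}) in place of its conditional version, as is already done in the proofs of Theorems~\ref{thm:gc} and~\ref{thm:D}; summing the $J$ contributions finishes the argument.

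The step I expect to be the main obstacle is the bookkeeping of the centering in the passage from the conditional (bootstrap) picture to the unconditional one. The finite-sampling process $\mathbb{G}_{N^{(j)}}^{R(j)}$ is centred at the source sample mean $(n^{(j)}/N^{(j)})\mathbb{P}^{(j)}_{N^{(j)}}$ rather than at $\tilde P_0$, and the conditioning law $P_0^{(j)}$ differs from $\tilde P_0$, so desymmetrization leaves a term governed by $\sup_{f\in\mathcal F}|\tilde P_0 f|$; one must apply the unconditional multiplier and symmetrization inequalities carefully enough that this, together with the fluctuations $N^{(j)}/N-\nu^{(j)}$, is absorbed into a constant multiple of $E^*\lVert\mathbb{G}_N\rVert_{\mathcal F}$ rather than appearing as a genuine excess. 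For the localized classes $\mathcal M_\delta$ and $\mathcal D_N$ to which the lemma is applied this centering term is of strictly smaller order than the quantity being bounded, so the estimate goes through.
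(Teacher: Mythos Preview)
Your route matches the paper's: use the decomposition~(\ref{eqn:decompmult}) and bound each source-wise finite-sampling piece by the full empirical process. The paper offloads the core inequality to Lemma~A.2 of~\cite{SW2013supp} (the stratified-sampling analogue, sources playing the role of strata) and handles the extra passage from $\tilde{\mathcal F}_j=\{\rho^{(j)}f:f\in\mathcal F\}$ back to $\mathcal F$ by a Jensen argument; you do the analogous passage by Ledoux--Talagrand contraction. Both devices legitimately strip the $[0,1]$-valued multiplier $\rho^{(j)}$ inside a symmetrized sum.

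Your centering worry, however, is not mere bookkeeping but the essential point, and your instinct to fall back on the specific classes $\mathcal M_\delta$, $\mathcal D_N$ is the right one. In the stratified setting $\sum_i R_i/\pi_i=N$ identically, so constants are annihilated and Lemma~A.2 of~\cite{SW2013supp} holds for arbitrary $\mathcal F$; under Hartley weighting $\mathbb P_N^{\textup H}1\neq 1$ in general, and for $\mathcal F=\{1\}$ one has $\mathbb G_N 1\equiv0$ whereas $\mathbb G_N^{\textup H}1\rightsquigarrow\mathbb G^{\textup H}1$ with strictly positive variance $\sum_j\nu^{(j)}\tfrac{1-p^{(j)}}{p^{(j)}}\textup{Var}_0^{(j)}(\rho^{(j)})$ by Theorem~\ref{thm:D}. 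Thus no universal constant can work for \emph{arbitrary} $\mathcal F$. What your symmetrization--contraction--desymmetrization actually delivers is $E^*\lVert\mathbb G_N^{\textup H}\rVert_{\mathcal F}\lesssim E^*\lVert\mathbb G_N\rVert_{\mathcal F}+\lVert\tilde P_0\rVert_{\mathcal F}$; the paper's Jensen step does not remove this residual either, since the supporting identity it invokes evaluates to $-\nu^{(j)}\textup{Cov}_0^{(j)}(\rho^{(j)},f)$ rather than zero. For the classes used in Theorems~\ref{thm:rate} and~\ref{thm:zthm1}--\ref{thm:zthm2} the extra $\lVert\tilde P_0\rVert_{\mathcal F}$ term is of smaller order and the applications go through exactly as you say; but you should state and prove the inequality with the residual term, not the version claimed.
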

\begin{proof}
Note that the finite sampling empirical process for each source is equivalent to the finite sampling empirical process for each stratum ((11) of \cite{MR2325244}) for stratified sampling. 
Because the H-empirical process admits a similar decomposition
(compare (\ref{eqn:decompmult}) and (10) of \cite{MR2325244}), this
lemma can be proved in the same way as in the proof of Lemma A.2 of \cite{SW2013supp} if $\lVert \mathbb{G}_{j,N_j}\rVert_{\tilde{\mathcal{F}}_j}\lesssim \lVert \mathbb{G}_{j,N_j}\rVert_{\mathcal{F}}$ where $\mathbb{G}_{j,N_j}\equiv \sqrt{N_j}(\mathbb{P}_{N^{(j)}}^{(j)}-P_0^{(j)})$. This inequality is easily proved by Jensen's inequality with $E\left\{1-\rho^{(j)}(V)\right\}(f(X)-P_0^{(j)}f)I\{V\in\mathcal{V}^{(j)}\}=0$.
\end{proof}

\begin{lemma}
\label{lemma:wza5}
Let $\mathcal{F}_N$ be a sequence of decreasing classes of functions such that 
$\lVert \mathbb{G}_N\rVert_{\mathcal{F}_N}=o_{P^*}(1)$. 
Assume that there exists an integrable envelope for $\mathcal{F}_{N_0}$ for some $N_0$. 
Then $E\lVert\mathbb{G}_N\rVert_{\mathcal{F}_N}\rightarrow 0$ as $N\rightarrow \infty$.
As a consequence, $\lVert \mathbb{G}_N^{\textup{H}}\rVert_{\mathcal{F}_N}=o_{P^*}(1)$. 

Suppose, moreover, that $\mathcal{F}_N$ is $P_0$-Glivenko-Cantelli  with 
$\| P_0 \|_{{\cal F}_{N_1}} < \infty$ for some $N_1$,
and that every $f_N \in\mathcal{F}_N$ converges to zero either pointwise or in 
$L_1 (P_0)$ as $N\rightarrow \infty$. 
Then $\lVert \mathbb{G}_N^{\textup{H},\#}\rVert_{\mathcal{F}_N}=o_{P^*}(1)$ with $\#\in\{\textup{c,sc}\}$,
assuming Condition \ref{cond:cal}.  
\end{lemma}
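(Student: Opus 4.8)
The plan is to establish the three assertions in sequence, each leaning on the previous one.

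First I would prove the mean convergence $E^*\lVert\mathbb{G}_N\rVert_{\mathcal{F}_N}\to0$. Since $(\mathcal{F}_N)$ is decreasing, an integrable envelope $F$ of $\mathcal{F}_{N_0}$ is also an envelope of $\mathcal{F}_N$ for every $N\geq N_0$, so $E^*\lVert\delta_X-P_0\rVert_{\mathcal{F}_N}\leq 2E^*F(X)<\infty$ for all large $N$. Together with the hypothesis $\lVert\mathbb{G}_N\rVert_{\mathcal{F}_N}=o_{P^*}(1)$, this is exactly the setting of Lemma~\ref{lemma:vw239} (the uniform-integrability device already used in the proof of Theorem~\ref{thm:zthm1} to pass from $\lVert\mathbb{G}_N\rVert_{\mathcal{D}_N}=o_{P^*}(1)$ to $E\lVert\mathbb{G}_N\rVert_{\mathcal{D}_N}=o(1)$), which gives $E^*\lVert\mathbb{G}_N\rVert_{\mathcal{F}_N}\to0$. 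The consequence for $\mathbb{G}_N^{\textup{H}}$ is then immediate: Lemma~\ref{lemma:gpleqg} with $\mathcal{F}=\mathcal{F}_N$ yields $E^*\lVert\mathbb{G}_N^{\textup{H}}\rVert_{\mathcal{F}_N}\lesssim E^*\lVert\mathbb{G}_N\rVert_{\mathcal{F}_N}\to0$, and Markov's inequality delivers $\lVert\mathbb{G}_N^{\textup{H}}\rVert_{\mathcal{F}_N}=o_{P^*}(1)$.

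For the calibrated processes I would reuse the decompositions from the proof of Theorem~\ref{thm:calD}, now indexed by the shrinking class $\mathcal{F}_N$. For $\#=\textup{c}$, write $\mathbb{G}_N^{\textup{H,c}}=\mathbb{G}_N^{\textup{H}}+(\mathbb{G}_N^{\textup{H,c}}-\mathbb{G}_N^{\textup{H}})$ and Taylor-expand $G(V^T\hat{\alpha}_N^{\textup{c}})-1$, bounding $\lVert\mathbb{G}_N^{\textup{H,c}}-\mathbb{G}_N^{\textup{H}}\rVert_{\mathcal{F}_N}$ by $\lVert(\mathbb{P}_N^{\textup{H}}-P_0)\dot{G}(V^T\alpha^*)(\cdot)V^T\rVert_{\mathcal{F}_N}\,O_{P^*}(1)+\sup_{f\in\mathcal{F}_N}\lvert P_0\dot{G}(V^T\alpha^*)f(X)V^T\rvert\,O_{P^*}(1)$, the $O_{P^*}(1)$ factors being $\sqrt{N}\hat{\alpha}_N^{\textup{c}}$ by Proposition~\ref{prop:alphaCal}. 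The first term is $o_{P^*}(1)$ by Theorem~\ref{thm:gc} once one checks, via Condition~\ref{cond:cal}(b),(c) and the Glivenko--Cantelli preservation theorem (Proposition~2 of \cite{MR1857319}), that the coordinate classes $\{v\mapsto\dot{G}(v^T\alpha)f(x)v:f\in\mathcal{F}_N,\alpha\in\mathbb{R}^k\}$ are products of the Glivenko--Cantelli class $\mathcal{F}_{N_1}$ with bounded functions; the second term is at most a constant multiple of $\sup_{f\in\mathcal{F}_N}P_0\lvert f(X)\rvert$, which tends to $0$ by dominated convergence using $\lVert P_0\rVert_{\mathcal{F}_{N_1}}<\infty$, the envelope $F$, and the pointwise or $L_1(P_0)$ convergence $f_N\to0$. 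For $\#=\textup{sc}$, I would use the decomposition (\ref{eqn:decompmult}) and a Taylor expansion of $G^{(j)}_{\hat{\alpha}_N^{\textup{sc},(j)}}-1$: the part driven by $\mathbb{P}_{N^{(j)}}^{R(j)}-(n^{(j)}/N^{(j)})\mathbb{P}_{N^{(j)}}^{(j)}$ and indexed by $\{\rho^{(j)}f:f\in\mathcal{F}_N\}$ (times bounded functions) is $o_{P^*}(1)$ by the bootstrap Glivenko--Cantelli theorem exactly as in the proof of Theorem~\ref{thm:gc}, whereas the part driven by $\mathbb{P}_{N^{(j)}}^{(j)}$ reduces, after the ordinary Glivenko--Cantelli theorem and dominated convergence, to a bounded bilinear form in $f_N$ times $\sqrt{N^{(j)}}\hat{\alpha}_N^{\textup{sc},(j)}$, which is $o_{P^*}(1)$ since that bilinear form contains the factor $P_0^{(j)}\lvert\rho^{(j)}(V)f_N(X)\rvert\to0$ while $\sqrt{N^{(j)}}\hat{\alpha}_N^{\textup{sc},(j)}=O_{P^*}(1)$ by Proposition~\ref{prop:alphaCal}. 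Combining with the preceding paragraph gives $\lVert\mathbb{G}_N^{\textup{H},\#}\rVert_{\mathcal{F}_N}=o_{P^*}(1)$ for $\#\in\{\textup{c,sc}\}$.

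The hard part will be the first assertion: moving from convergence in outer probability to convergence in outer expectation genuinely requires the decreasing structure of $(\mathcal{F}_N)$ and a single global integrable envelope to control the tails uniformly in $N$, which is precisely what Lemma~\ref{lemma:vw239} packages. Past that, the only real bookkeeping in the calibrated cases is verifying that multiplying $\mathcal{F}_N$ by the bounded functions $V$, $\rho^{(j)}$ and $\dot{G}(\cdot)$ preserves the Glivenko--Cantelli property and that the shrinking $f_N$ still admit dominated convergence; once these are in place, the calibration corrections vanish by the same arguments already carried out in the proof of Theorem~\ref{thm:calD}.
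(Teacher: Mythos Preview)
Your proposal is correct and follows essentially the same approach as the paper: apply Lemma~\ref{lemma:vw239} (using the decreasing classes to get a uniform integrable envelope) to obtain $E^*\lVert\mathbb{G}_N\rVert_{\mathcal{F}_N}\to0$, then Lemma~\ref{lemma:gpleqg} and Markov's inequality for $\mathbb{G}_N^{\textup{H}}$, and finally the Taylor expansion from the proof of Theorem~\ref{thm:calD} together with dominated convergence to show $\lVert\mathbb{G}_N^{\textup{H},\#}-\mathbb{G}_N^{\textup{H}}\rVert_{\mathcal{F}_N}=o_{P^*}(1)$. The paper's proof is terser---it simply cites the proof of Theorem~\ref{thm:calD} for the calibrated step---but your expanded treatment of the $\textup{c}$ and $\textup{sc}$ cases matches what that reference unpacks to.
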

\begin{proof}
We apply Lemma \ref{lemma:vw239} with $\ZZ_{i}$ and $\mathcal{F}_N$ in 
Lemma~\ref{lemma:vw239} replaced by $\delta_{(X_i,V_i)}-P_0$.
The uniform boundedness condition of Lemma~\ref{lemma:vw239} is satisfied, 
because $E^*\lVert \delta_{(X_1,V_1)}-P_0\rVert_{\mathcal{F}_N}<\infty$ for $N\geq N_0$, 
and this expectation is decreasing in $N\geq N_0$.
Thus, 
$E^*\lVert \mathbb{G}_N\rVert_{\mathcal{F}_N}\rightarrow 0$. 
Apply Lemma \ref{lemma:gpleqg}, and Markov's inequality to obtain 
$\lVert \mathbb{G}_N^{\textup{H}}\rVert_{\mathcal{F}_N}=o_{P^*}(1)$.

For $\mathbb{G}_N^{\textup{H},\#}$ with $\#\in\{\textup{c,sc}\}$, apply Taylor's theorem as in the proof of Theorem \ref{thm:calD} and then the dominated convergence theorem to conclude $\lVert \mathbb{G}_N^{\textup{H},\#}-\mathbb{G}_N^{\textup{H}}\rVert_{\mathcal{F}_N}=o_{P^*}(1)$.
The triangle inequality yields 
$\left\lVert\mathbb{G}_N^{\textup{H},\#}\right\rVert_{\mathcal{F}_{\delta_N}}=o_{P^*}(1)$.
\end{proof}

The following is Lemma A.4 of \cite{SW2013supp} with correction that $\mathbb{S}_N=N^{-1/2}\sum_{i=1}^N\mathbb{Z}_{i}$ instead of $\mathbb{S}_N=\sum_{i=1}^N\mathbb{Z}_{i}$.
\begin{lemma}
\label{lemma:vw239}
Let $\mathbb{Z}_{1},\mathbb{Z}_{2},\ldots$ be i.i.d.\@ stochastic processes indexed
by $\mathcal{F}_N$ with $E^*\lVert \mathbb{Z}_{1}\rVert_{\mathcal{F}_N}$ uniformly bounded in $N$.
Suppose that  $\lVert \mathbb{S}_N\rVert_{\mathcal{F}_N} \ $\\
$ \ \equiv\lVert N^{-1/2}\sum_{i=1}^N\mathbb{Z}_{i}\rVert_{\mathcal{F}_N}=o_{P^*}(1)$.
Then
$E^*\lVert \mathbb{S}_N\rVert_{\mathcal{F}_N}\rightarrow 0$, as $N\rightarrow \infty$.
\end{lemma}

\section{Unknown Sample Size}
\label{sec:N}
In this section, we briefly discuss the case where $N$ is unknown but
$N^{(j)},j=1,\ldots,J,$ are known. In this case, we can estimate $N$
by
\begin{equation*}
\hat{N} = \sum_{i=1}^N\sum_{j=1}^J \frac{R_i^{(j)}
  \rho^{(j)}(V_i)}{\pi^{(j)}(V_i)} = N\mathbb{P}_N^{\textup{H}}1.
\end{equation*} 
This estimator is unbiased for $N$ and is getting closer to $N$ as
$N\rightarrow\infty$ in the sense that $N/\hat{N}\rightarrow_P1$.
We consider two situations regarding estimation.
In the first case, we are
interested in estimating the mean of $f(X)$ without knowing $N$. This can be done by
$\mathbb{P}_N^{\textup{H}}f$ if $N$ is known. 
We can estimate $\theta\equiv P_0f$ without known $N$ by replacing $N$ in
$\mathbb{P}_N^{\textup{H}}f$ by $\hat{N}$:
\begin{equation*}
\hat{\theta}_N = \frac{1}{\hat{N}}\sum_{i=1}^N\sum_{j=1}^J
  \frac{R^{(j)}_i\rho^{(j)}(V_i)}{\pi^{(j)}(V_i)} f(X_i) = \left(\mathbb{P}_N^{\textup{H}}1\right)^{-1}\mathbb{P}_N^{\textup{H}}f.
\end{equation*}
This modified estimator is
consistent for $P_0f$. To see this note that
\begin{eqnarray*}
&&\hat{\theta}_N
=\mathbb{P}_N^{\textup{H}}f+\left\{\left(\mathbb{P}_N^{\textup{H}}1\right)^{-1}-1\right\}\mathbb{P}_N^{\textup{H}}f\rightarrow_p P_0f,
\end{eqnarray*}
since $\mathbb{P}_N^{\textup{H}}f\rightarrow_P P_0f$ and
$\mathbb{P}_N^{\textup{H}}1\rightarrow_P1$ by Theorem \ref{thm:gc}.
For asymptotic normality, the delta method and
$\mathbb{P}_N^{\textup{H}}1\rightarrow_P1$ yield
\begin{eqnarray*}
&&\sqrt{N}(\hat{\theta}_N-\theta)\\
&&=\sqrt{N}\left(\mathbb{P}_N^{\textup{H}}1\right)^{-1}\left(\mathbb{P}_N^{\textup{H}}f-P_0f\right)
+\sqrt{N}\left\{\left(\mathbb{P}_N^{\textup{H}}1\right)^{-1}-1\right\}P_0f\\
&&=\sqrt{N}\left(\mathbb{P}_N^{\textup{H}}f-P_0f\right)
-\sqrt{N}(\mathbb{P}_N^{\textup{H}}1-1)P_0f + o_P(1)\\
&&=\mathbb{G}_N^{\textup{H}}(f-P_0f)  + o_P(1)\rightarrow_d \mathbb{G}^{\textup{H}}(f-P_0f).
\end{eqnarray*}
The limiting variable is normally distributed with mean zero and
variance
\begin{equation*}
\textup{AV}\left(\sqrt{N}(\hat{\theta}_N-\theta)\right)=
\textup{Var}_0(f) + \sum_{j=1}^J \nu^{(j)} \frac{1-p^{(j)}}{p^{(j)}}
\textup{Var}_0^{(j)} \left\{\rho^{(j)}(V)(f-P_0f)\right\}.
\end{equation*}
This asymptotic variance is estimated by a plug-in estimator presented
below. For the population
variance $\textup{Var}_0(f)$, we use
\begin{equation*}
\widehat{\textup{Var}}_0(f) =
\frac{1}{\hat{N}}\sum_{i=1}^N\sum_{j=1}^J
\frac{R^{(j)}_i\rho^{(j)}(V_i)}{\pi^{(j)}(V_i)} \{f(X_i)\}^2
-\left\{\frac{1}{\hat{N}}\sum_{i=1}^N\sum_{j=1}^J
\frac{R^{(j)}_i\rho^{(j)}(V_i)}{\pi^{(j)}(V_i)} f(X_i)\right\}^2.
\end{equation*}
For design variances, 
 we estimate $\nu^{(j)}$ and $p^{(j)}$ by 
\begin{equation*}
\hat{\nu}^{(j)} =\frac{N^{(j)}}{\hat{N}} , \quad \hat{p}^{(j)} = \frac{n^{(j)}}{N^{(j)}}.
\end{equation*}
The conditional variance is estimated by
\begin{eqnarray*}
&&  \widehat{\textup{Var}}_0^{(j)}\left(\rho^{(j)}(f-P_0f)\right) \\
&&= \frac{1}{N^{(j)}}
\sum_{i=1}^{N^{(j)}}
\frac{R_{(j),i}^{(j)}}{\pi^{(j)}(V_{(j),i})}\left\{\rho^{(j)}(V_{(j),i})f(X_{(j),i})-\hat{\theta}_N)\right\}^{\otimes
  2}\\
&&\quad -\left\{\frac{1}{N^{(j)}} \sum_{i=1}^{N^{(j)}}
\frac{R_{(j),i}^{(j)}}{\pi^{(j)}(V_{(j),i})}\rho^{(j)}(V_{(j),i})(f(X_{(j),i})-\hat{\theta}_N)\right\}^{\otimes
  2}.
\end{eqnarray*}
In practice, an estimated variance of $\hat{\theta}_N$ is often
reported as an estimated
$\textup{AV}\left(\sqrt{N}(\hat{\theta}_N-\theta)\right)$ divided by
$N$ if $N$ is known. For unknown $N$, we can report estimated
$\textup{AV}\left(\sqrt{N}(\hat{\theta}_N-\theta)\right)$ shown above divided by
$\hat{N}$.

The second case is infinite-dimensional $M$-estimation.
Note that both $M$- and $Z$-estimators can be obtained without known
$N$ since $N$ is a multiplicative factor for a criterion function and
estimating equations. Hence results of consistency, rates of convergence and
asymptotic normality follow without additional changes. The sample size
is needed in variance estimation as above
 but we can simply replace $N$ by $\hat{N}$. The fact
 $1/N-1/\hat{N}\rightarrow_P0$ justifies this replacement.

\section{Numerical Study}
\label{sec:data}
\subsection{Linear regression}
\label{subsec:lin}
We consider the following regression model 
\begin{equation*}
Y=Z^T\theta+e, \quad E[e|Z]=0.
\end{equation*}
where $Y$ is an outcome variable, $Z$ is a vector of covariates, $e$
is an error term and $\theta$ is the vector of regression coefficients.
If we further assume the normality of $e$, this estimation problem
reduces to $M$-estimation for a parametric model discussed in Example
\ref{ex:para}.
Here we consider weighted least squares estimation without assuming
normality and derive asymptotic properties for illustration of our methodology.
The weighted least squares estimator 
\begin{equation*}
\hat{\theta}_N = \left\{\mathbb{P}_N^{\textup{H}}Z^{\otimes 2}\right\}^{-1}\mathbb{P}_N^{\textup{H}}ZY
\end{equation*}
minimizes a criterion function
\begin{equation*}
\mathbb{P}_N^{\textup{H}}m_\theta(Y,Z) = \mathbb{P}_N^{\textup{H}}|Y-Z^T\theta|^2.
\end{equation*}
This is  the same estimator as a solution to the Hartley-type likelihood
equation if we assume the normality of $e$. 
If $P_0Y^2<\infty$ and $P_0|Z^{\otimes 2}|<\infty$, it follows from
Theorem \ref{thm:gc} that 
\begin{equation*}
\hat{\theta}_N = \left\{P_0Z^{\otimes 2}\right\}^{-1}P_0ZY+o_{P}(1) = \theta_0+o_P(1).
\end{equation*} 
For asymptotic normality, we apply Theorem \ref{thm:D} to obtain
\begin{equation*}
\sqrt{N}(\hat{\theta}_N-\theta_0) = \left\{P_0Z^{\otimes
    2}\right\}^{-1} \mathbb{G}_N^{\textup{H}}Ze \rightarrow_d\left\{
P_0Z^{\otimes 2}\right\}^{-1}\mathbb{G}^{\textup{H}} Ze.
\end{equation*}
This limiting variable is a mean-zero normal random vector with
variance 
\begin{equation*}
\textup{Var}_0\left(\left\{P_0Z^{\otimes 2}\right\}^{-1}Ze\right) + \sum_{i=1}^J \nu^{(j)}\frac{1-p^{(j)}}{p^{(j)}} \textup{Var}_0^{(j)}\left(\left\{P_0Z^{\otimes 2}\right\}^{-1}Ze\right)
\end{equation*}
For variance estimation, we use a plug-in estimate of the asymptotic
variance. We estimate the function $\tilde{\ell}_0(x)=
\left\{P_0Z^{\otimes 2}\right\}^{-1}ze$ by
\begin{equation*}
\hat{\tilde{\ell}}_0(X_i) \equiv \left\{\mathbb{P}_N^{\textup{H}}Z^{\otimes
    2}\right\}^{-1}Z_i (Y_i-Z_i^T\hat{\theta}_N),
\end{equation*}
at each selected observation.
The population variance $\textup{Var}_0(\tilde{\ell}_0)$ is estimated
by
\begin{equation*}
\widehat{\textup{Var}_0(\tilde{\ell}_0)} =
\mathbb{P}_N^{\textup{H}}\left\{\hat{\tilde{\ell}}_0(X)\right\}^{\otimes 2}
\end{equation*}
noting that $P_0\tilde{\ell}_0=0$.
For the design variance we estimate $\nu^{(j)}$ and $p^{(j)}$ by 
\begin{equation*}
\hat{\nu}^{(j)} =\frac{N^{(j)}}{N} , \quad \hat{p}^{(j)} = \frac{n^{(j)}}{N^{(j)}}.
\end{equation*}
The conditional variance is estimated by
\begin{eqnarray*}
\widehat{\textup{Var}_0^{(j)}(\rho^{(j)}\tilde{\ell}_0)} &=& \frac{1}{N^{(j)}}
\sum_{i=1}^{N^{(j)}}
\frac{R_{(j),i}^{(j)}}{\pi^{(j)}(V_{(j),i})}\left\{\rho^{(j)}(V_{(j),i})\hat{\tilde{\ell}}_0(X_{(j),i})\right\}^{\otimes
  2}\\
&&-\left\{\frac{1}{N^{(j)}} \sum_{i=1}^{N^{(j)}}
\frac{R_{(j),i}^{(j)}}{\pi^{(j)}(V_{(j),i})}\rho^{(j)}(V_{(j),i})\hat{\tilde{\ell}}_0(X_{(j),i})\right\}^{\otimes
  2}.
\end{eqnarray*}
We use a similar variance estimator for logistic regression and Cox regression
models with appropriate changes of the estimate of $\tilde{\ell}_0$.

For a simulation study, data were generated with a covariate
$Z\sim N(0,1)$ and a normal error $e\sim N(0,1)$. 
The intercept is $\theta_1=1$ and the slope is $\theta_2\in\{0,1/2,1\}$.
The variable $V\in\mathcal{V}=\mathbb{R}$ observed for every item is $V=Z$ which determines
data source membership. For
selected items we observe $X=(Y,Z)$.
We consider two scenarios. In the first scenario, two data sources are
$\mathcal{V}^{(1)} =\{V:Z\geq -1\}$ and $\mathcal{V}^{(2)}
=\{V:Z\leq 1\}$.
In the second scenario, data sources are $\mathcal{V}^{(1)}
=\mathcal{V}$ and $\mathcal{V}^{(2)}
=\{V:Z\leq 1\}$. In either case, we selected 20 percent of items in
$\mathcal{V}^{(1)}$ and 30 percent of items in $\mathcal{V}^{(2)}$. 
In the first scenario, sizes of two data sources are almost
identical. The items in the intersection $\mathcal{V}^{(1)}\cap
\mathcal{V}^{(2)}$ constitute about 68 percent of the entire population.
On average, about 38 percent of items were selected, and among them 12
percent were selected twice. 
In the second scenario, the second data source consists of about 84
percent of the entire population. On average about 41 percent were
selected among whom about 12 percent were selected twice.

Table \ref{tab:ln1} shows Monte Carlo sample bias and standard deviations of our
estimator with $\rho$ from Proposition \ref{prop:optBer}.
Clearly, our estimator has little bias in each setting. The standard
deviations are similar to the average of the plug-in estimator of the
standard error. 
In Figures \ref{fig:lin1} and \ref{fig:lin2}, the average of the absolute deviations are proportional
to $1/N^{1/2}$, which indicates the $\sqrt{N}$-convergence rate of our
estimator. 
Q-Q plots of the scaled estimator
$\sqrt{N}(\hat{\theta}_N-\theta_0)/\widehat{SE}(\hat{\theta})$ show that most
points are concentrated on the line $y=x$, suggesting that the scaled
estimator approximately has the standard normal distribution.

\begin{table}\tiny
\label{tab:ln2}
\begin{tabular}{c|cccccc}
& $N$& $N^{(1)}$& $N^{(2)}$& $n^{(1)}$& $n^{(2)}$& Duplication\\\hline
Scenario 1&500&421&421&85&127&21\\
& 10000&8413&8416&1683&2525&410\\\hline
Scenario 2&500&500&420&100&127&25\\
& 10000&10000&8414&2000&2524&505\\
\end{tabular}
\caption{Sample sizes for the linear regression model based on 2000
  data sets}
\end{table}

\begin{table}\tiny

\begin{tabular}{cc|cc|cc|cc}
$\theta$&& \multicolumn{2}{|c|}{$(1,1)$}&\multicolumn{2}{|c|}{$(1,0.5)$}&\multicolumn{2}{|c|}{$(1,0)$}\\
$N$& &500& 10000 & 500 & 10000 & 500&\multicolumn{1}{c|}{10000}\\\hline\hline

\multicolumn{8}{l}{Complete data (MLE)}\\\hline
$\theta_1$  &  Bias &.0002&.0001&.0002&.0001&.0002&.0001\\
&SD &.0439&.0111&.0439&.0100&.0439&.0100\\\hline
 $\theta_2$ &  Bias & .0009&.0003&.0009&.0002&.0009&.0002\\
&SD &.0438&.0111&.0438&.0100&.0438&.0100\\\hline\hline

\multicolumn{8}{l}{Scenario 1}\\\hline
$\theta_1$  &  Bias &.0014&.0005&.0014&.0005&.0014&.0005\\
&SD &.0781&.0195&.0781&.0178&.0781&.0178\\
&SEE &.0765&.0194&.0765&.0174&.0765&.0174\\\hline
 $\theta_2$ &  Bias &.0033&.0001&,0033&.0002&.0033&.0002\\
&SD &.0883&.0215&.0883&.0196&.0883&.0196\\
&SEE &.0853&.0218&.0853&.0196&.0853&.0196\\\hline\hline

\multicolumn{8}{l}{Scenario 2}\\\hline
$\theta_1$  &  Bias &.0004&.0002&.0004&.0002&.0004&.0002\\
&SD &.0731&.0171&.0731&.0171&.0731&.0171\\
&SEE &.0749&.0169&.0749&.0169&.0749&.0169\\\hline
 $\theta_2$ &  Bias &.0005&.0003&.0005&.0003&.0005&.0003\\
&SD &.0847&.0190&.0847&.0190&.0847&.0190\\
&SEE &.0812&.0186&.0812&.0186&.0812&.0186\\\hline\hline
\multicolumn{8}{l}{Note: Bias, an absolute Monte Carlo sample
  bias; }\\
\multicolumn{8}{l}{SD, a Monte Carlo sample standard deviation; }\\
\multicolumn{8}{l}{SEE, average of a plug-in
  estimator of a standard error.}\\
\end{tabular}
\caption{Performance of $\hat{\theta}$ with different $\theta$ and
  scenarios for the linear regression model. }
\label{tab:ln1}
\end{table}

\begin{figure}[htbp]
\includegraphics[width=12cm,height=7cm]{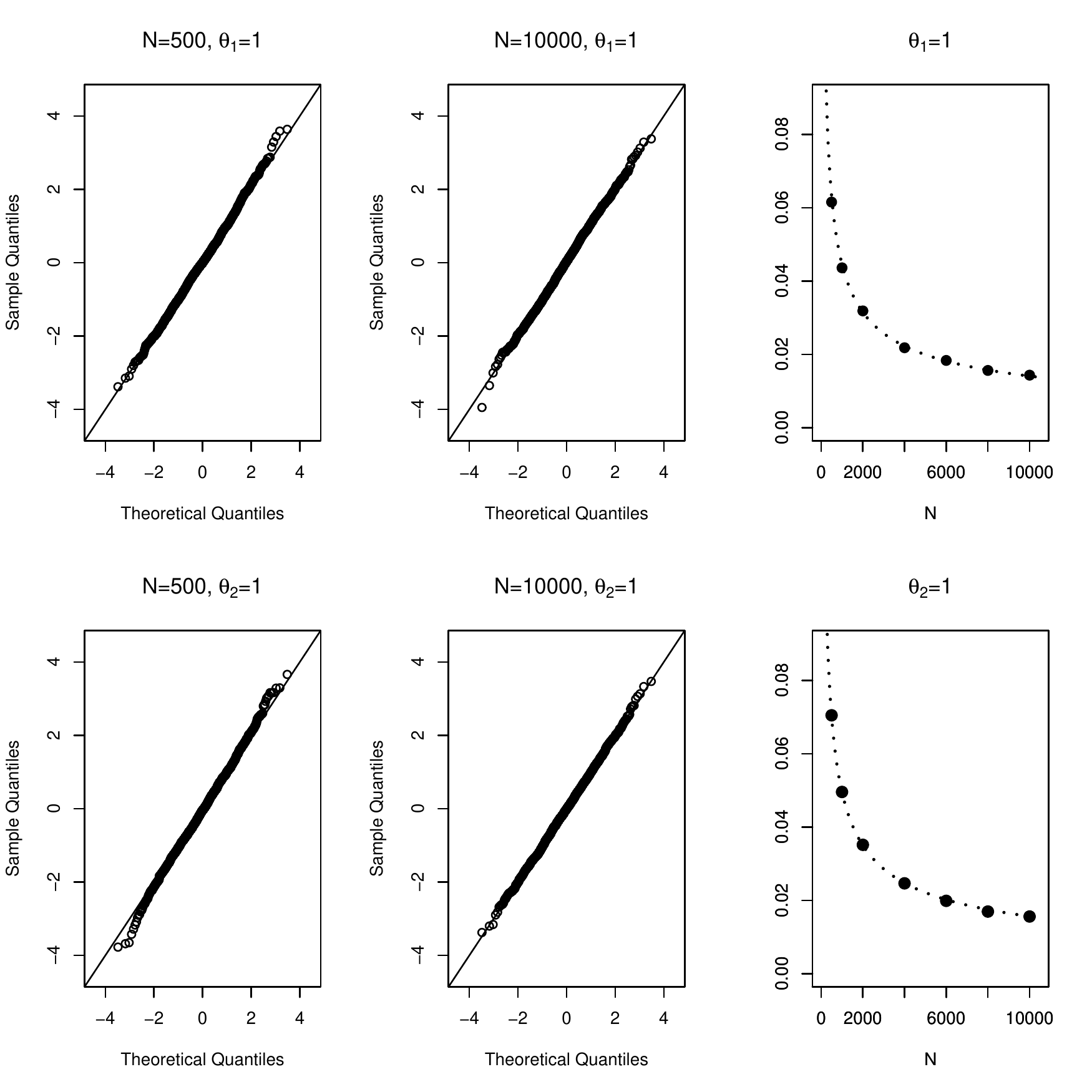}
\caption{Q-Q plots of
  $\sqrt{N}(\hat{\theta}-\theta_0)/\widehat{SE}(\hat{\theta})$ superimposed by
  $y=x$ and plots of average absolute
  differences against $N$ superimposed by $y=c/x^{1/2},c=1.4, 1.6$ for
  linear regression in Scenario 1.}
\label{fig:lin1}
\end{figure}

\begin{figure}[htbp]
\includegraphics[width=12cm,height=7cm]{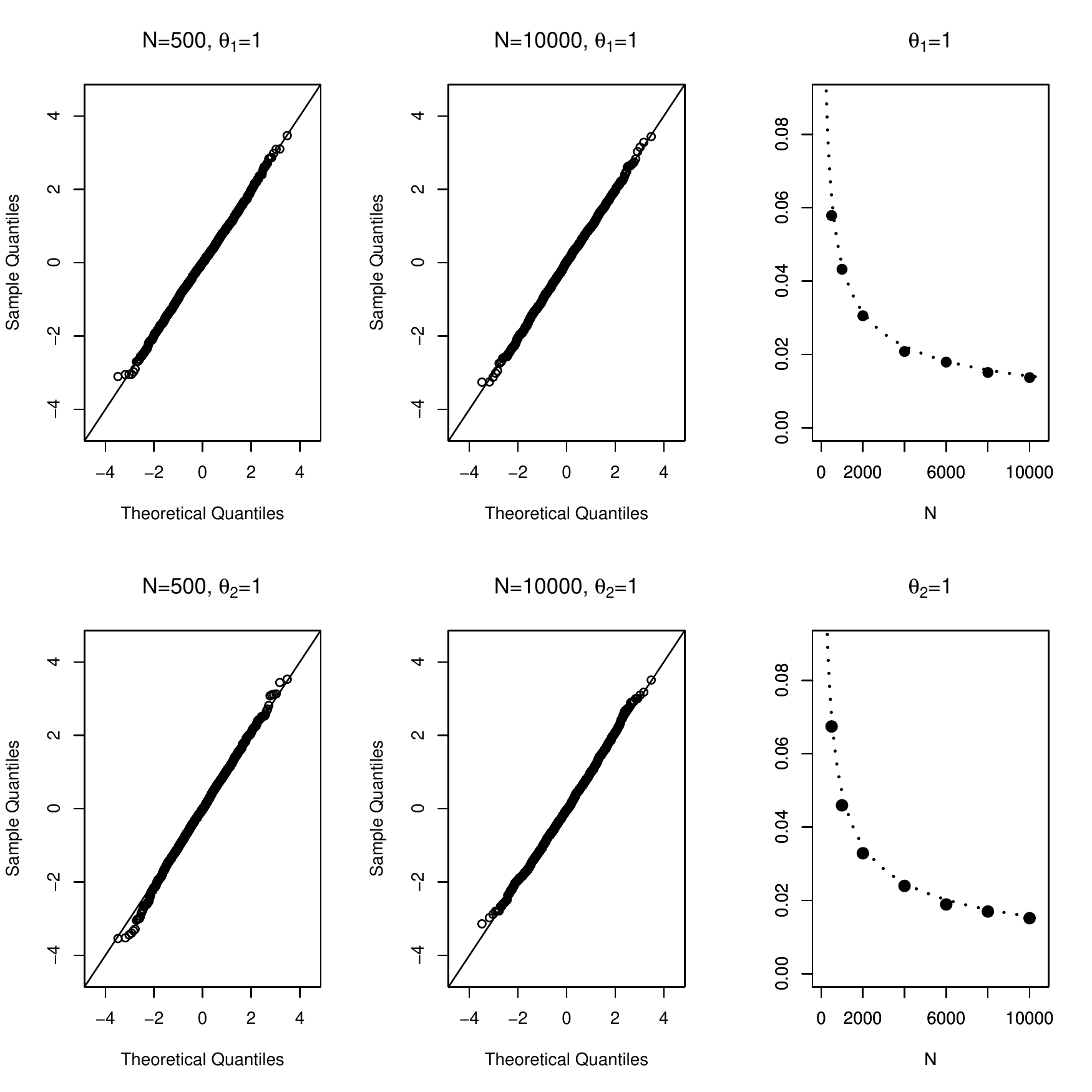}
\caption{Q-Q plots of
  $\sqrt{N}(\hat{\theta}-\theta_0)/\widehat{SE}(\hat{\theta})$  superimposed by
  $y=x$ and plots of average absolute
  differences against $N$ superimposed by $y=c/x^{1/2},c=1.4, 1.6$ for
  linear regression in Scenario 2.}
\label{fig:lin2}
\end{figure}

\newpage
\subsection{Logistic regression}
Next, we consider logistic regression model 
\begin{equation*}
E[Y|Z] = g(Z^T\theta), \quad g(x) = 1/(1+e^{-x}),
\end{equation*}
where $Y$ is a binary random variable,  $Z$ is a vector of
covariates and $\theta$ is the vector of regression coefficients.
Our estimator $\hat{\theta}_N$ solves the Hartley-type likelihood
equation 
\begin{equation*}
\mathbb{P}_N^{\textup{H}} Z\{Y-g(Z^T\theta)\}=0.
\end{equation*}
For consistency of $\hat{\theta}_n$ we introduce a variant of Theorem \ref{thm:const}. 
This theorem is useful when an estimator is formulated as a $Z$-estimator.
\begin{thm}
Suppose that $\mathcal{F}=\{f_\theta(x):\theta\in\Theta\}$ is $P_0$-Glivenko-Cantelli, and that for every $\epsilon>0$
\begin{equation*}
\inf_{\lVert \theta-\theta_0\rVert\geq \epsilon}|P_0 f_\theta|>0=P_0f_{\theta_0}.
\end{equation*}
Then $\lVert \hat{\theta}_N-\theta_0\rVert\rightarrow_P0$ if
$\mathbb{P}_N^{\textup{H}}f_{\hat{\theta}_N} =o_P(1)$.
\end{thm}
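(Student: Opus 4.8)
The plan is to transcribe the classical consistency argument for $Z$-estimators (Theorem 5.9 of \cite{MR1652247}) to the merged-data setting, with our U-LLN (Theorem \ref{thm:gc}) playing the role of the i.i.d.\ uniform law of large numbers. First I would observe that each $f_\theta$ is a function of $x$ only, so $\tilde{P}_0 f_\theta = P_0 f_\theta$ for every $\theta$; consequently the hypothesis that $\mathcal{F}=\{f_\theta:\theta\in\Theta\}$ is $P_0$-Glivenko-Cantelli allows Theorem \ref{thm:gc} (applied coordinatewise when $f_\theta$ is vector-valued) to supply the uniform bound
\begin{equation*}
\sup_{\theta\in\Theta}\left|\mathbb{P}_N^{\textup{H}} f_\theta - P_0 f_\theta\right|=\lVert \mathbb{P}_N^{\textup{H}}-\tilde{P}_0\rVert_{\mathcal{F}}\rightarrow_{P^*}0.
\end{equation*}

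The remaining steps are the standard argument of evaluating the uniform bound at the random point $\hat{\theta}_N$. By the triangle inequality together with the defining property $\mathbb{P}_N^{\textup{H}} f_{\hat{\theta}_N}=o_P(1)$,
\begin{equation*}
\left|P_0 f_{\hat{\theta}_N}\right|\le \lVert \mathbb{P}_N^{\textup{H}}-\tilde{P}_0\rVert_{\mathcal{F}}+\left|\mathbb{P}_N^{\textup{H}} f_{\hat{\theta}_N}\right|\rightarrow_{P^*}0.
\end{equation*}
Then I would invoke the separation hypothesis: fix $\epsilon>0$ and set $\eta_\epsilon\equiv\inf_{\lVert\theta-\theta_0\rVert\ge\epsilon}|P_0 f_\theta|>0$. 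On the event $\{\lVert\hat{\theta}_N-\theta_0\rVert\ge\epsilon\}$ we have $|P_0 f_{\hat{\theta}_N}|\ge\eta_\epsilon$, hence
\begin{equation*}
P^*\left(\lVert\hat{\theta}_N-\theta_0\rVert\ge\epsilon\right)\le P^*\left(|P_0 f_{\hat{\theta}_N}|\ge\eta_\epsilon\right)\longrightarrow 0,
\end{equation*}
and since $\epsilon>0$ was arbitrary, $\lVert\hat{\theta}_N-\theta_0\rVert\rightarrow_{P}0$. Put differently, Theorem 5.9 of \cite{MR1652247} applies verbatim once the uniform convergence above is established.

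There is no substantive obstacle: the entire content is the U-LLN of Theorem \ref{thm:gc}, which requires only the Glivenko-Cantelli hypothesis already assumed and, in particular, no entropy or Donsker-type condition. The one technical nicety is the measurability of the random quantity $P_0 f_{\hat{\theta}_N}$ and the associated passage to outer probability; this is dealt with exactly as in the i.i.d.\ case (Section 1.2 of \cite{MR1385671} and the proof of Theorem 5.9 of \cite{MR1652247}), by bounding it below on the relevant event by the deterministic constant $\eta_\epsilon$.
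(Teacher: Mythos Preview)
Your proposal is correct and takes essentially the same approach as the paper: the paper's proof simply refers to Theorems 5.7 and 5.9 of \cite{MR1652247} (together with the generalization of Theorem \ref{thm:const}), and you have written out precisely that argument, using Theorem \ref{thm:gc} in place of the i.i.d.\ Glivenko-Cantelli theorem to feed into the standard $Z$-estimator consistency proof.
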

The proof follows from a slightly more general form of Theorem
\ref{thm:const}. See Theorems 5.7 and 5.9 of \cite{MR1652247}.
The class of functions $\mathcal{F}=\{f_\theta:\theta\in\Theta\}$ with
$f_\theta(x) = z\{y-g(z^T\theta)\}$ is Glivenko-Cantelli assuming that
$P_0|Z|<\infty$. The second condition in the theorem is satisfied if
$P_0Z^{\otimes 2}$ is positive definite. 
Other conditions of Theorem \ref{thm:zthm1} are easily verified. 
It follows that
\begin{equation*}
\sqrt{N}(\hat{\theta}_N-\theta_0) \rightarrow_d
\mathbb{G}^{\textup{H}} \tilde{\ell}_0
\end{equation*}
where 
\begin{equation*}
\tilde{\ell}_0(x) =\left\{P_0 g(Z^T\theta_0)\{1-g(Z^T\theta_0)\}Z^{\otimes 2}\right\}^{-1}\{y-g(z^T\theta_0)\}.
\end{equation*}
The function $\tilde{\ell}_0(x)$ is estimated by
\begin{equation*}
\hat{\tilde{\ell}}_0(x) =\left\{\mathbb{P}_N^{\textup{H}} g(Z^T\hat{\theta}_N)\{1-g(Z^T\hat{\theta}_N)\}Z^{\otimes 2}\right\}^{-1}\{y-g(z^T\hat{\theta}_N)\}.
\end{equation*}
This is used for our plug-in estimator of the asymptotic variance of $\hat{\theta}_N$.

For a simulation study, data were generated with a covariate
$Z\sim N(0,1)$ and a normal error $e\sim N(0,1)$. 
The regression coefficients are $\theta=(\theta_1,\theta_2)$ where $\theta_1$ was chosen so that the overall
prevalence was approximately 15\%, and $\theta_2\in\{0,\log(1.2),\log(2)\}$.
We considered three scenarios. The first two are the same as scenarios
considered for linear regression in Section \ref{subsec:lin}.
The third scenario has data sources $\mathcal{V}^{(1)}=\mathcal{V}=\{0,1\}$
and $\mathcal{V}^{(2)}=\{V:Y=1\}$ where $V=Y$ and $X=(Y,Z)$. 
Results are summarized in Table \ref{tab:lg1} and Figures \ref{fig:lg1}-\ref{fig:lg3}.
These agree with what are expected from our theory as in linear regression.

\begin{table}\tiny
\label{tab:lg2}
\begin{tabular}{c|cccccc}
& $N$& $N^{(1)}$& $N^{(2)}$& $n^{(1)}$& $n^{(2)}$& Duplication\\\hline
Scenario 1&500&421&421&85&127&21\\
& 10000&8413&8413&1683&2524&410\\\hline
Scenario 2&500&500&421&100&127&26\\
& 10000&10000&8414&2000&2525&505\\\hline
Scenario 3& 500&500&102&100&102&20\\
&10000&10000&2030&2000&2030&407\\
\end{tabular}
\caption{Sample sizes for the logistic regression model based on 2000
  data sets}
\end{table}

\begin{table}\tiny

\begin{tabular}{cc|cc|cc|cc}
$\theta$&& \multicolumn{2}{|c|}{$(-1.5,\log (2))$}&\multicolumn{2}{|c|}{$(-1.38,\log (1.2))$}&\multicolumn{2}{|c|}{$(-1.36,0)$}\\
$N$& &500& 10000 & 500 & 10000 & 500&\multicolumn{1}{c|}{10000}\\\hline\hline

\multicolumn{8}{l}{Complete data (MLE)}\\\hline
$\theta_1$  &  Bias &.007&.0007& .003&.0006&.004&.0008\\
&SD &.125&.0287&.113&.0253&.112&.0250\\\hline
 $\theta_2$ &  Bias & .007&.0003&.002&.0003&.001&.0004\\
&SD &.127&.0279&.114&.0247&.113&.0241\\\hline\hline

\multicolumn{8}{l}{Scenario 1}\\\hline
$\theta_1$  &  Bias &.010&.0012&.016&.0016&.017&.0017\\
&SD & .211&.0465&.198&.0441&.199&.0445\\
&SEE &.209&.0463&.195&.0432&.194&.0431\\\hline
 $\theta_2$ &  Bias &.005&.0010&.007&.0009&.005&.0002\\
&SD &.246&.0531&.221&.0479&.218&.0472\\
&SEE &.245&.0547&.218&.0494&.215&.0486\\\hline\hline

\multicolumn{8}{l}{Scenario 2}\\\hline
$\theta_1$  &  Bias &.029&.0031&.024&.0017&.022&.0021\\
&SD &.209&.0458&.196&.0427&.195&.0430\\
&SEE &.205&.0450&.191&.0421&.190&.0421 \\\hline
 $\theta_2$ &  Bias &.014&.0021&.004&.0014&.001&.0015\\
&SD &.250&.0535&.216&.0455&.214&.0451\\
&SEE &.240&.0531&.212&.0473&.207&.0463\\\hline\hline

\multicolumn{8}{l}{Scenario 3}\\\hline
$\theta_1$  &  Bias &.004&.0004&.005&.0006&.009&.0097\\
&SD &.134&.0296&.121&.0275&.121&.0267\\
&SEE &.136&.0298&.123&.0271&.121&.0268\\\hline
 $\theta_2$ &  Bias &.020&.0015&.009&.0009&.004&.0010\\
&SD &.183&.0397&.156&.0339&.155&.0332\\
&SEE &.180&.0401&.156&.0340&.153&.0335\\\hline\hline
\multicolumn{8}{l}{Note: Bias, an absolute Monte Carlo sample
  bias; }\\
\multicolumn{8}{l}{SD, a Monte Carlo sample standard deviation;}\\
\multicolumn{8}{l}{SEE, average of a plug-in
  estimator of a standard error.}\\
\end{tabular}
\caption{Performance of $\hat{\theta}$ with different $\theta$ and
  scenarios  for the logistic regression model.  }
\label{tab:lg1}
\end{table}

\begin{figure}[htbp]
\includegraphics[width=12cm,height=7cm]{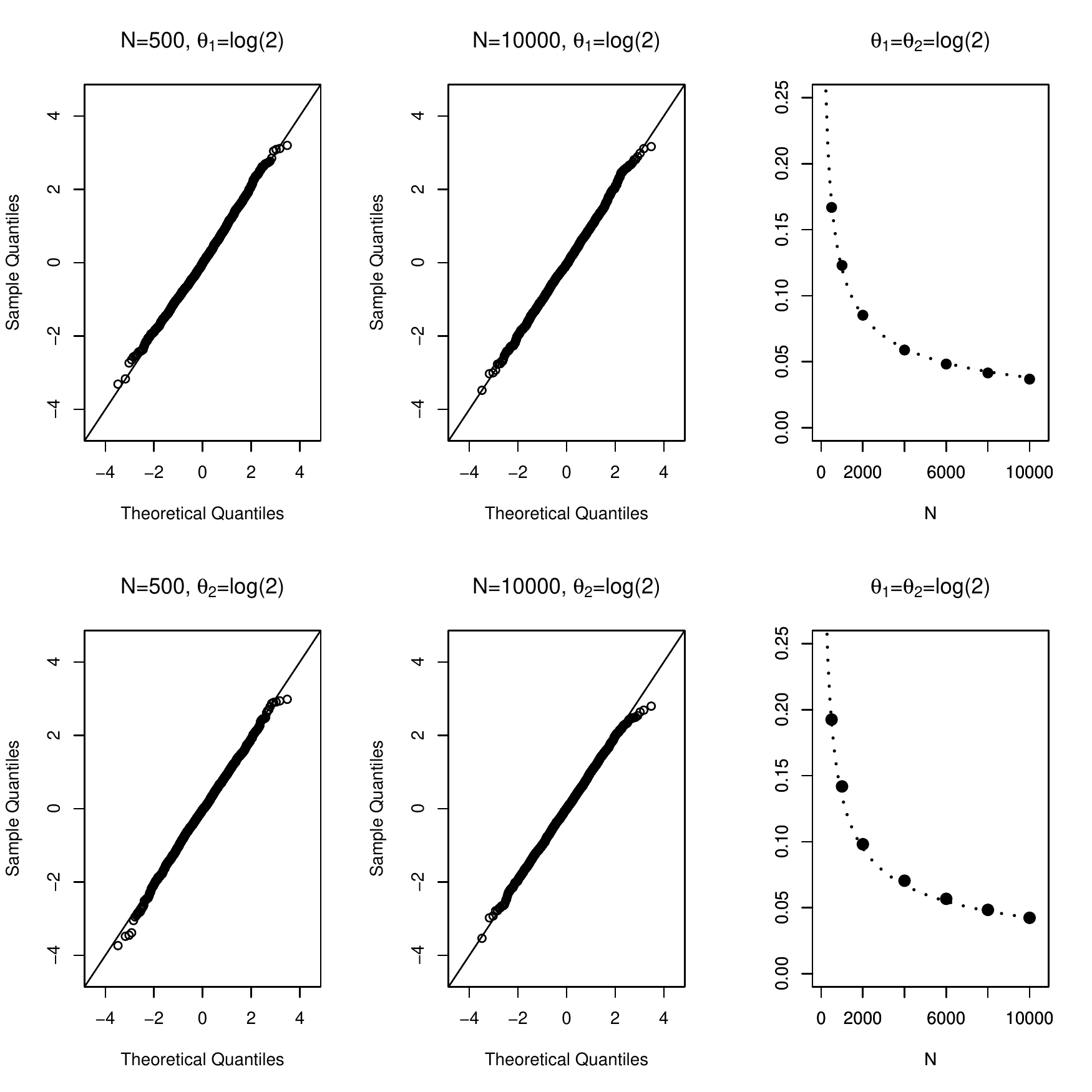}
\caption{Q-Q plots of
  $\sqrt{N}(\hat{\theta}-\theta_0)/\widehat{SE}(\hat{\theta})$ superimposed by
  $y=x$ and plots of average absolute
  differences against $N$ superimposed by $y=c/x^{1/2},c=3.8,4.2$ for
  logistic regression in Scenario 1.}
\label{fig:lg1}
\end{figure}

\begin{figure}[htbp]
\includegraphics[width=12cm,height=7cm]{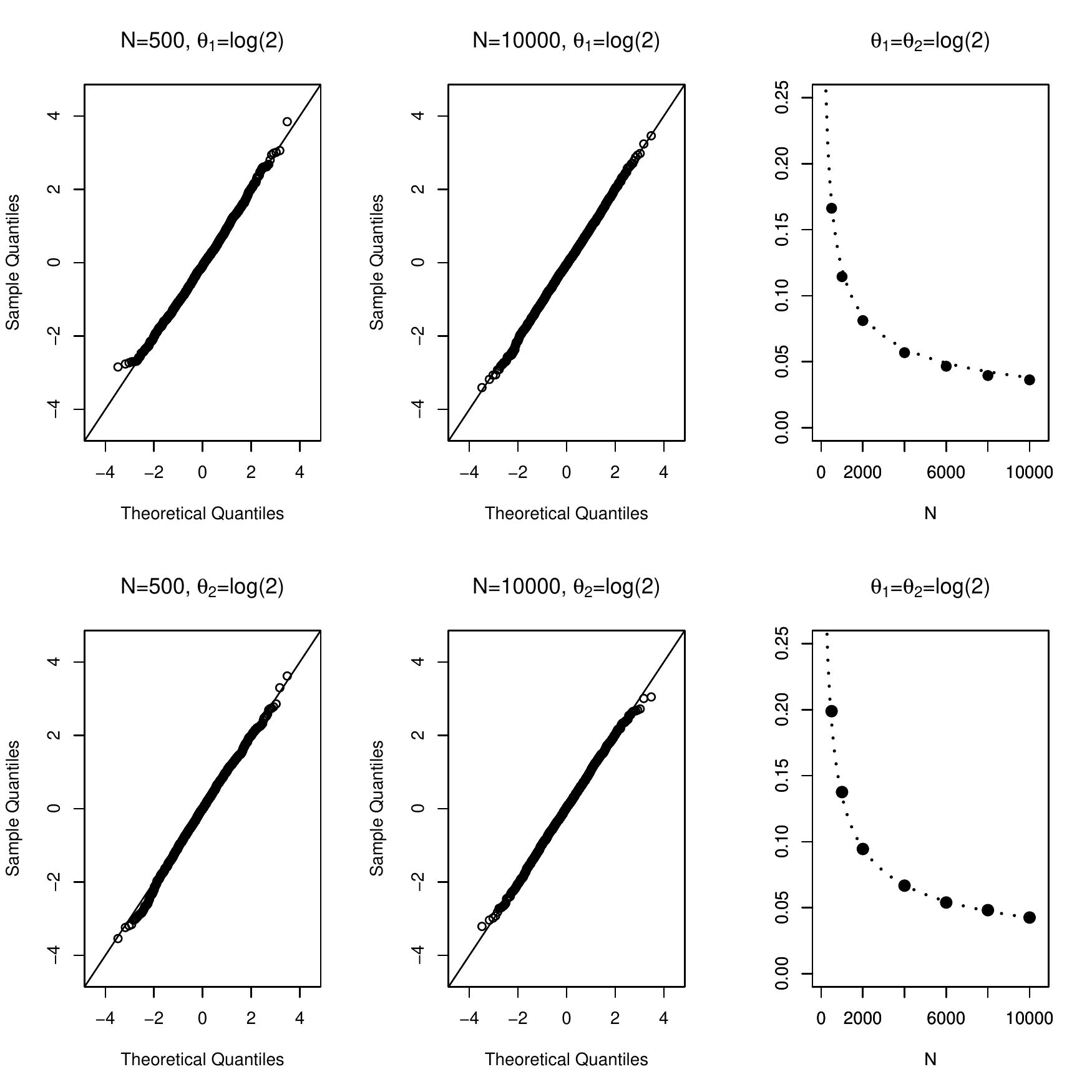}
\caption{Q-Q plots of
  $\sqrt{N}(\hat{\theta}-\theta_0)/\widehat{SE}(\hat{\theta})$ superimposed by
  $y=x$ and plots of average absolute
  differences against $N$ superimposed by $y=c/x^{1/2},c=3.8,4.2$ for
  logistic regression in Scenario 2.}
\label{fig:lg2}
\end{figure}
\begin{figure}[htbp]
\includegraphics[width=12cm,height=7cm]{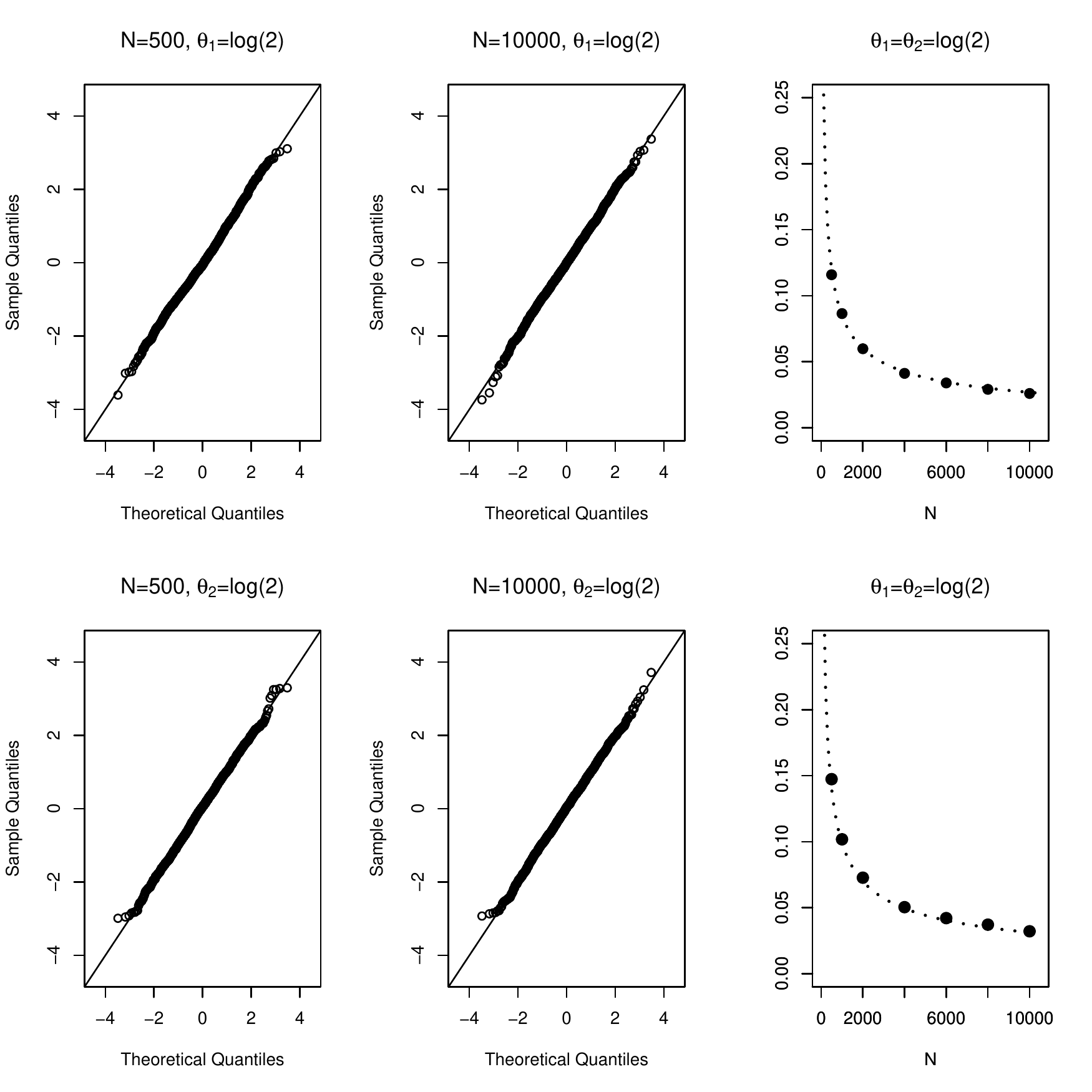}
\caption{Q-Q plots of
  $\sqrt{N}(\hat{\theta}-\theta_0)/\widehat{SE}(\hat{\theta})$ superimposed by
  $y=x$ and plots of average absolute
  differences against $N$ superimposed by $y=c/x^{1/2},c=3.8,4.2$ for
  logistic regression in Scenario 3.}
\label{fig:lg3}
\end{figure}

\newpage
\subsection{Cox proportional hazards model}
We present a plug-in estimator of the asymptotic variance used in the
simulation study. The asymptotic variance of our
uncalibrated estimator $\hat{\theta}_N$ is 
\begin{equation*}
I_0^{-1} + \sum_{j=1}^J \nu^{(j)}\frac{1-p^{(j)}}{p^{(j)}} \textup{Var}_{0}^{(j)}(I^{-1}_0\rho^{(j)}(V)\ell^*_{0}(X)).
\end{equation*}
We estimate $\nu^{(j)}$ and $p^{(j)}$ by
\begin{equation*}
\hat{\nu}^{(j)} = N^{(j)}/N, \quad \hat{p}^{(j)} = n^{(j)}/N^{(j)}.
\end{equation*}
The efficient score in the i.i.d.\@ setting is 
\begin{equation*}
\ell^*_{0} (y, \delta , z)
=\delta(z-(M_1/M_0)(y))
         -e^{\theta_0^Tz}\int_{[0,y]}\left(z-(M_1/M_0)(t)\right)d\Lambda_0(t),
\end{equation*}
where $M_k(s) = P_{\theta_0,\Lambda_0}[Z^ke^{\theta_0^TZ}I(Y\geq s)]$, $k= 0, 1$.
We estimate $M_k(s)$ by
\begin{equation*}
\widehat{M}_k(s) =
\mathbb{P}_N^{\textup{H}}Z^ke^{\hat{\theta}_N^TZ}I(Y\geq s).
\end{equation*}
The estimator $\hat{\Lambda}_N$ of $\Lambda$ is the weighted Breslow estimator of $\Lambda$ given by
\begin{equation*}
\hat{\Lambda}_N(t)=\mathbb{P}_N^{\textup{H}}\frac{\Delta I(Y\leq t)}{\widehat{M}_0(Y)},
\end{equation*}
Thus, we estimate $\ell^*_0$ by
\begin{equation*}
\widehat{\ell_0^*}(y, \delta , z)
=\delta(z-(\widehat{M}_1/\widehat{M}_0)(y))
         -e^{\hat{\theta}_N^Tz}\int_{[0,y]}\left(z-(\widehat{M}_1/\widehat{M}_0)(t)\right)d\widehat{\Lambda}_N(t).
\end{equation*}
The efficient information
$I_0=P_0\left(\ell_0^*\right)^{\otimes 2}$ in the i.i.d.\@ setting
is estimated by
\begin{equation*}
\hat{I}_N = \mathbb{P}_N^{\textup{H}} \left\{\widehat{\ell^*_0} \right\}^{\otimes 2},
\end{equation*}
and the efficient influence function $\tilde{\ell}_0 = I_0^{-1}\ell^*$
in the i.i.d.\@ setting is
estimated by
\begin{equation*}
\widehat{\tilde{\ell}_0}(y, \delta , z) = \{\hat{I}_N\}^{-1}\widehat{\ell_0^*}(y, \delta , z).
\end{equation*}
Now the population variance $I_0^{-1}$ is estimated by
$\{\hat{I}_N\}^{-1}$, and the conditional variance
$\textup{Var}_0^{(j)}(\rho^{(j)}\tilde{\ell}_0)$ is estimated by
\begin{eqnarray}
\widehat{\textup{Var}_0^{(j)}(\rho^{(j)}\tilde{\ell}_0)} &=& \frac{1}{N^{(j)}}
\sum_{i=1}^{N^{(j)}}
\frac{R_{(j),i}^{(j)}}{\pi^{(j)}(V_{(j),i})}\left\{\rho^{(j)}(V_{(j),i})\widehat{\tilde{\ell}}_0(X_{(j),i})\right\}^{\otimes
  2}\nonumber\\
&&-\left\{\frac{1}{N^{(j)}} \sum_{i=1}^{N^{(j)}}
\frac{R_{(j),i}^{(j)}}{\pi^{(j)}(V_{(j),i})}\rho^{(j)}(V_{(j),i})\widehat{\tilde{\ell}}_0(X_{(j),i})\right\}^{\otimes
  2}\label{eqn:estvarj}.
\end{eqnarray}
Combining these pieces we obtain our plug-in variance estimator.

For the calibrated estimator $\hat{\theta}_{N,\textup{c}}$, we  estimate $Q_{\textup{c}}^{(j)}(\tilde{\ell}_0)[v]$ by
\begin{eqnarray*}
\widehat{Q}_{\textup{c}}^{(j)}(\widehat{\tilde{\ell}}_0)[v]
  =\mathbb{P}_N^{\textup{H}}\widehat{\tilde{\ell}}_0V \left\{\mathbb{P}_N^{\textup{H}}V\right\}\rho^{(j)}(v)v.
\end{eqnarray*}
We estimate
$\textup{Var}_0^{(j)}(\rho^{(j)}\tilde{\ell}_0-Q_{\textup{c}}^{(j)}(\tilde{\ell}_0))$
by replacing $\rho^{(j)}(V_{(j),i})\widehat{\tilde{\ell}}_0(X_{(j),i})$ in
(\ref{eqn:estvarj}) by $\rho^{(j)}(V_{(j),i})\widehat{\tilde{\ell}}_0(X_{(j),i}) -
\widehat{Q}_{\textup{c}}^{(j)}(\widehat{\tilde{\ell}}_0)[V_{(j),i}]$.
The rest is the same as the variance estimator for the uncalibrated
estimator. 
The variance estimator for the proposed calibrated estimator
$\hat{\theta}_{N,\textup{sc}}$ is similarly obtained.

We provide a further detail on data source membership in Scenario 4.
There are three data sources in Scenario 4 with $\mathcal{V}^{(3)} = \{V:\Delta=1\}$. The first two data sources were
determined by multinomial logistic regression with a parameter
$\beta=(\beta_1,\beta_2,\beta_3)=(-1,5,3)$.
The probabilities of memberships in 
$\mathcal{V}^{(1)}\cap \{\mathcal{V}^{(2)}\}^c$,
$\mathcal{V}^{(1)}\cap \mathcal{V}^{(2)}$, and
$\{\mathcal{V}^{(1)}\}^{c}\cap \mathcal{V}^{(2)}$ given $V$ are 
\begin{eqnarray*}
&&P\left(\left.V\in \mathcal{V}^{(1)}\cap
\{\mathcal{V}^{(2)}\}^c\right|V\right) = \frac{\exp(\beta_1Z_2)}{\sum_{j=1}^3\exp(\beta_jZ_2)},\\
&&P\left(\left.V\in \mathcal{V}^{(1)}\cap
\mathcal{V}^{(2)}\right|V\right) = \frac{\exp(\beta_2Z_2)}{\sum_{j=1}^3\exp(\beta_jZ_2)},\\
&&P\left(\left.V\in \{\mathcal{V}^{(1)}\}^c\cap
\mathcal{V}^{(2)}\right|V\right) = \frac{\exp(\beta_3Z_2)}{\sum_{j=1}^3\exp(\beta_jZ_2)}.
\end{eqnarray*}

Additional results are summarized in Tables \ref{tab:supp1}-\ref{tab:supp3}, and Figures \ref{fig:3}-\ref{fig:5}.
In Section \ref{sec:6} we present comparison of calibration methods and
choice of $\rho$ for Scenario 4. Table \ref{tab:supp1} summarizes
comparison for other scenarios. Results for Scenario 3 are similar to
results for scenario 4 although efficiency gain through our
calibration method is negligible for estimation of
$\theta_2$.
In Scenarios 1 and 2, differences in Monte Carlo sample standard deviations by choice of $\rho$ is
small and our proposed calibration does not improve efficiency much
for estimation of $\theta_1$. Still, our proposed weights tended to
produce the smallest standard deviation and our proposed calibration reduced
standard deviation most in estimation of $\theta_2$.
Tables \ref{tab:supp2}-\ref{tab:supp3} compare variables used for calibration in Scenarios 3 and 4
respectively. Our proposed calibration gains efficiency most when
using $U$ and $Y$ for estimation of both $\theta_1$ and
$\theta_2$. This indicates that our method is expected to perform
better with more variables to be calibrated. Other calibration methods
produce small improvement and hence we do not see a clear pattern of
choice of variables for reducing standard deviation. 
The percent reduction in design variances is larger in estimation of
$\theta_1$ than in estimation of $\theta_2$.
This is because the auxiliary variable $U$ correlated with $Z_1$ is
used for calibration while $U$ and $Y$ are not strongly correlated
with $Z_2$.
Figures \ref{fig:3}-\ref{fig:5} are Q-Q plots and plots of the average absolute deviations
against $N$ in Scenarios 1-3. Results show that our scaled estimator follows the
standard normal distribution with $\sqrt{N}$-convergence rate.

\begin{table}\tiny

\begin{tabular}{cccccccccc}
\multicolumn{10}{l}{Scenario 1}\\
 $(\alpha,\beta)=(.2,.5)$&\multicolumn{4}{c}{$N=500$}&\multicolumn{4}{c}{$N=10000$}\\ 
$\theta_1=\log (2)$& w/o& SC & C & DC & &w/o& SC & C & DC \\\hline
 MLE & .245&&&&&.0540&&&\\
  S &.482&.488&.482&.482&&.0985&.0985&.0985&.0985\\
  SF &.483&489&.483&.483&&.0985&.0985&.0985&.0985\\
  B & 490&.496&.490&.490&&.0992&.0993&.0992&.0992\\

$\theta_2=\log (2)$& w/o& SC & C & DC & &w/o& SC & C & DC \\\hline
 MLE &.121  &&&&&.0257 &&&\\
  S &.251&.247&.252&.251&&.0526&.0510&.0526&.0523\\
  SF &.252&.247&.252&.251&&.0526&.0511&.0527&.0523\\
  B   &.254&.250&.254&.254&&.0528&.0515&.0528&.0528\\

\multicolumn{10}{l}{Scenario 2}\\
 $(\alpha,\beta)=(.2,.5)$&\multicolumn{4}{c}{$N=500$}&\multicolumn{4}{c}{$N=10000$}\\ 
$\theta_1=\log (2)$& w/o& SC & C & DC & &w/o& SC & C & DC \\\hline
 MLE & .244&&&&&.0537&&&\\
  S &.479  &.484&.478&.478&&.0967&.0967&.0966&.0967\\
  SF &.480&.485&.479&.479&&.0968&.0968&.0968&.0968\\
  B & .489&.491&.489&.489&&.0981&.0981&.0981&.0981\\
$\theta_2=\log (2)$& w/o& SC & C & DC & &w/o& SC & C & DC \\\hline
 MLE &.120 &&&&&.0267&&&\\
  S &.250&.248&.251&.250&&.0526&.0507&.0526&.0524\\
  SF &.250&.249&.251&.250&&.0526&.0508&.0526&.0524\\
  B   &.252&.252&.252&.252&&.0528&.0513&.0528&.0528\\

\multicolumn{10}{l}{Scenario 3}\\
 $(\alpha,\beta)=(.2,.5)$&\multicolumn{4}{c}{$N=500$}&\multicolumn{4}{c}{$N=10000$}\\ 
$\theta_1=\log (2)$& w/o& SC & C & DC & &w/o& SC & C & DC \\\hline
 MLE & .241&&&&&.0534&&&\\
  S &  .330&.307&.331&.331&&.0733&.0666&.0733&.0733\\
  SF &.337&.315&.338&.338&&.0748&.0682&.0748&.0748\\
  B & .396&.379&.396&.396&&.0866&.0810&.0866&.0866\\

$\theta_2=\log (2)$&& &  & & && & & \\\hline
 MLE &.122  &&&&&.0259 &&&\\
  S &.181&.182&.182&.182&&.0378&.0376&.0378&.0378\\
  SF &.185&.185&.185&.185&&.0386&.0383&.0386&.0386\\
  B   &.211&.211&.211&.211&&.0441&.0438&.0441&.0441\\
\end{tabular}
\caption{Comparison of estimators with different
  calibrations and $\rho$ by standard deviations. }
\label{tab:supp1}
\end{table}

\begin{table}\tiny

\begin{tabular}{ccc|cccc|ccc}
Scenario 3&\multicolumn{2}{l}{$(\alpha,\beta)=(.2,.5)$}&&&\\ 
$\theta_1=\log (2)$&&&\multicolumn{3}{c}{$N=500$}&&\multicolumn{3}{c}{$N=10000$}  \\\hline
& MLE & Bias &&.003&&&&.0004&\\
&&SD && .241&&&&.0534\\
&&SEE&& .243&&&&.0536\\\cline{2-10}
& w/o & Bias &&.005&&&&.0009\\
&&SD && .330&&&&.0733 \\
&&SEE& &.330&&&&.0728 \\\cline{2-10}
\\
&\multicolumn{2}{c|}{Calibrated variables}& $U$& $Y$ & $U,Y$  &&$U$& $Y$ & $U,Y$  \\\cline{2-10}
\multirow{12}{*}{Methods} & SC & Bias &.003&.004&.004&&.0002&.0009&.0002\\
&&SD &.308&.331&.307&&.0667&.0734&.0666\\
&&SEE &.304&.329&.303&&.0673&.0728&.0670\\
&&\% Reduction &24.1&$-1.2$&25.5&& 33.1&$-.5$&33.7\\\cline{2-10}
 & C & Bias &.003&.005&.005&&.0020&.0010&.0009\\
&&SD &.333&.332&.331&&.0722&.0722&.0733\\
&&SEE&.330&.331&.328&&.0729&.0728&.0726\\
&&\% Reduction &$-3.5$&$-3.3$&$-1.2$&&5.5&5.5&0.0\\\cline{2-10}
 & DC & Bias &.003&.005&.005&&.0020&.0010&.0009\\
&&SD &.333&.332&.331&&.0722&.0722&.0733\\ 
&&SEE &.330&.331&.328&&.0729&.0728&.0726\\
&&\% Reduction &$-3.5$&$-3.3$&$-1.2$&&5.5&5.5&0.0\\\cline{2-10}
\multicolumn{6}{c}{}\\
$\theta_2=\log (2)$&\multicolumn{6}{c}{} \\\hline

& MLE & Bias &&.001&&&&.0010\\
&&SD && .122&&&&.0259\\
&&SEE&& .120&&&&.0264 \\\cline{2-10}
& w/o & Bias &&.023&&&&.0003\\
&&SD && .181&&&&.0378\\
&&SEE&& .171&&&&.0381 \\\hline
\multicolumn{6}{c}{}\\
&\multicolumn{2}{c|}{Calibrated variables}& $U$& $Y$ & $U,Y$  &&$U$& $Y$ & $U,Y$  \\\cline{2-10}
\multirow{12}{*}{Methods} & SC & Bias &.023&.023&.023&&.0003&.0003&.0004\\
&&SD &.181&.182&.181&&.0381&.0378&.0376\\
&&SEE &.170&.171&.170&&.0381&.0381&.0381\\
&&\% Reduction &0.0&$-.7$&$-.7$&&$-2.5$&0.0&1.7\\\cline{2-10}
 & C & Bias &.021&.022&.023&&.0008&.0004&.0003\\
&&SD &.180&.179&.182&&.0376&.0383&.0378\\
&&SEE &.171&.172&.171&&.0381&.0381&.0381\\
&&\% Reduction &2.1&4.6&$-.3$&&1.7&$-3.4$&0.0\\\cline{2-10}
 & DC & Bias &.021&.022&.023&&.0008&.0004&.0003\\
&&SD &.180&.179&.182&&.0376&.0383&.0378\\ 
&&SEE &.171&.172&.171&&.0381&.0381&.0381\\
&&\% Reduction &2.2&4.6&$-.3$&&1.7&$-3.4$&0.0\\\hline
\\\\
\multicolumn{10}{l}{Note: \% Reduction in design variance is with respect to an uncalibrated estimator with}\\
\multicolumn{10}{l}{our proposed weights.}\\
\end{tabular}
\caption{Comparison of choice of variables for
  calibration by standard deviations in Scenario 3. }
\label{tab:supp2}
\end{table}

\begin{table}\tiny

\begin{tabular}{ccc|cccc|ccc}
Scenario 4&\multicolumn{2}{l}{$(\alpha,\beta)=(.2,.5)$}&&&\\ 
$\theta_1=\log (2)$&&&\multicolumn{3}{c}{$N=500$}&&\multicolumn{3}{c}{$N=10000$}  \\\hline
& MLE & Bias &&.004&&&&.0031&\\
&&SD & &.246&&&&.0534\\
&&SEE& &.243&&&&.0536 \\\cline{2-10}
& w/o & Bias &&.010&&&&.0019\\
&&SD && .368&&&&.0789\\
&&SEE&& .355&&&&.0789 \\\cline{2-10}
\multicolumn{6}{c}{}\\
&\multicolumn{2}{c|}{Calibrated variables}& $U$& $Y$ & $U,Y$  &&$U$&
                                                                     $Y$ & $U,Y$  \\ \cline{2-10}
\multirow{12}{*}{Methods} & SC & Bias &.002&.010&.004&&.0014&.0020&.0010\\
&&SD &.336&.369&.332&&.0723&.0789&.0720\\
&&SEE &.324&.353&.320&&.0719&.0789&.0711\\
&&\% Reduction &26.5&$-1.1$&29.0&&25.9&0.0&27.1\\\cline{2-10}
 & C & Bias &.003&.014&.010&&.0007&.0015&.0020\\
&&SD &.375&.370&.370&&.0784&.0792&.0789\\
&&SEE& .356&.358&.354&&.0789&.0789&.0786\\
&&\% Reduction &$-5.5$&$-1.9$&$-1.2$&&2.0&$-1.2$&0.0\\\cline{2-10}
 & DC & Bias &.015&.009&.017&&.0022&.0019&.0023\\
&&SD &.370&.370&.371&&.0789&.0789&.0789\\ 
&&SEE &.354&.354&.351&&.0789&.0789&.0785\\
&&\% Reduction &$-1.4$&$-1.3$&$-2.5$&&0.0&0.0&0.0\\\cline{2-10}
\multicolumn{6}{c}{}\\
$\theta_2=\log (2)$&   \multicolumn{6}{c}{}\\\hline

& MLE & Bias &&.004&&&&.0004\\
&&SD && .121&&&&.0270\\
&&SEE&& .120&&&&.0264 \\\cline{2-10}
& w/o & Bias& &.023&&&&.0018\\
&&SD && .192&&&&.0407\\
&&SEE&& .181&&&&.0407 \\\hline
 \multicolumn{6}{c}{}\\
&\multicolumn{2}{c|}{Calibrated variables}& $U$& $Y$ & $U,Y$  &&$U$& $Y$ & $U,Y$  \\\cline{2-10}
\multirow{12}{*}{Methods} & SC & Bias &.016&.018&.013&&.0016&.0012&.0011\\
&&SD &.190&.189&.188&&.0401&.0398&.0395\\
&&SEE &.178&.175&.174&&.0402&.0397&.0394\\
&&\% Reduction &3.0&5.2&6.4&&4.4&6.6&8.8\\\cline{2-10}
 & C & Bias &.019&.037&.020&&.0020&.0021&.0016\\
&&SD &.187&.191&.193&&.0405&.0406&.0405\\
&&SEE &.181&.181&.180&&.0407&.0407&.0406\\
&&\% Reduction &7.3&2.1&$-.6$&&1.5&.7&1.5\\\cline{2-10}
 & DC & Bias &.020&.020&.019&&.0017&.0017&.0016\\
&&SD &.194&.192&.193&&.0407&.0403&.0403\\ 
&&SEE &.181&.176&.178&.&.0407&.0399&.0401\\
&&\% Reduction &$-1.5$&.8&$-.6$&&0.0&2.9&2.9\\\hline
\multicolumn{6}{c}{}\\
\multicolumn{10}{l}{Note: \% Reduction in design variance is with respect to an uncalibrated estimator with}\\
\multicolumn{10}{l}{our proposed weights.}\\
\end{tabular}
\caption{Comparison of choice of variables for
  calibration by standard deviations in Scenario 4. }
\label{tab:supp3}
\end{table}

\begin{figure}[htbp]
\includegraphics[width=12cm,height=7cm]{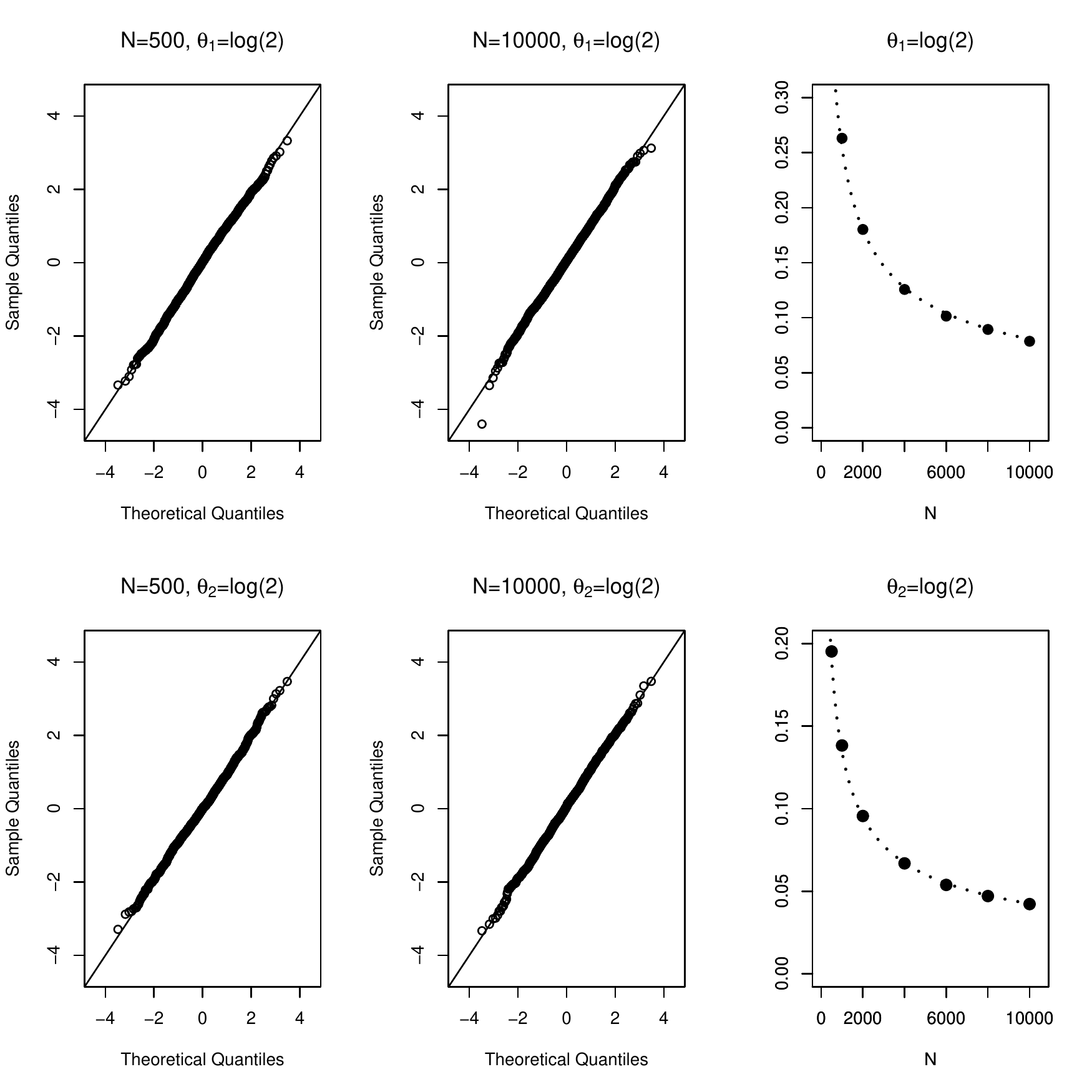}
\caption{Q-Q plots of
  $\sqrt{N}(\hat{\theta}-\theta_0)/\widehat{SE}(\hat{\theta})$ superimposed by
  $y=x$ and plots of average absolute
  differences against $N$ superimposed by $y=c/x^{1/2},c=8.0,4.2$ for
  the Cox model in Scenario 1.}
\label{fig:3}
\end{figure}
\begin{figure}[htbp]
\includegraphics[width=12cm,height=7cm]{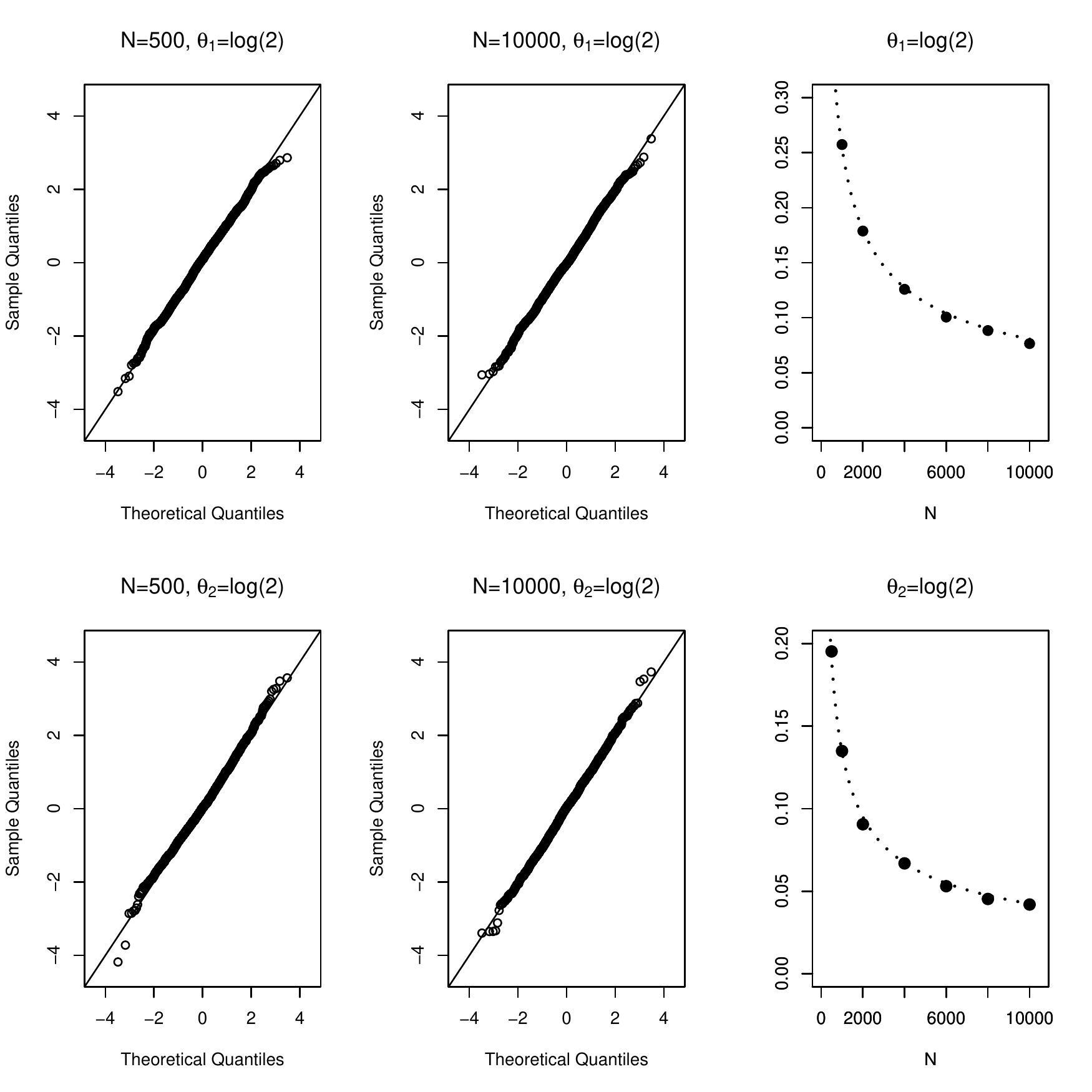}
\caption{Q-Q plots of
  $\sqrt{N}(\hat{\theta}-\theta_0)/\widehat{SE}(\hat{\theta})$ superimposed by
  $y=x$ and plots of average absolute
  differences against $N$ superimposed by $y=c/x^{1/2},c=8.0,4.2$ for
  the Cox model in Scenario 2.}
\label{fig:4}
\end{figure}
\begin{figure}[htbp]
\includegraphics[width=12cm,height=7cm]{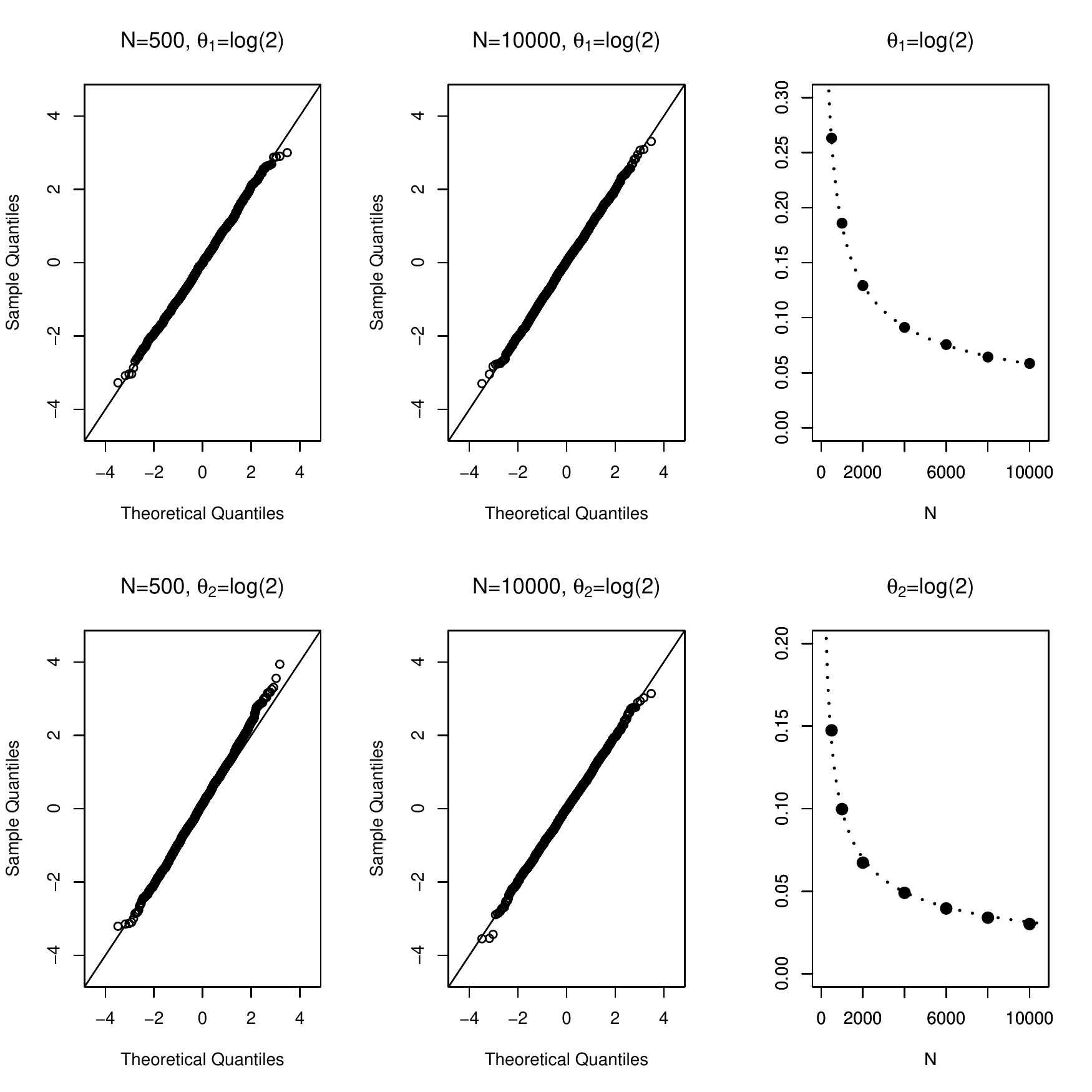}
\caption{Q-Q plots of
  $\sqrt{N}(\hat{\theta}-\theta_0)/\widehat{SE}(\hat{\theta})$ superimposed by
  $y=x$ and plots of average absolute
  differences against $N$ superimposed by $y=c/x^{1/2},c=5.8,3.1$ for
  the Cox model in Scenario 3.}
\label{fig:5}
\end{figure}

\bibliographystyle{imsart-number}
\bibliography{wlebootv1}

\begin{thebibliography}{62}

\bibitem{MR822047}
\begin{binproceedings}[author]
\bauthor{\bsnm{Alexander},~\bfnm{Kenneth~S.}\binits{K.~S.}}
(\byear{1985}).
\btitle{Rates of growth for weighted empirical processes}.
In \bbooktitle{Proceedings of the {B}erkeley conference in honor of {J}erzy
  {N}eyman and {J}ack {K}iefer, {V}ol.\ {II} ({B}erkeley, {C}alif., 1983)}.
\bseries{Wadsworth Statist./Probab. Ser.}
\bpages{475--493}.
\bpublisher{Wadsworth, Belmont, CA}.
\bmrnumber{822047 (87e:60057)}
\end{binproceedings}
\endbibitem

\bibitem{MR1340827}
\begin{barticle}[author]
\bauthor{\bsnm{Bae},~\bfnm{Jongsig}\binits{J.}} \AND
  \bauthor{\bsnm{Levental},~\bfnm{Shlomo}\binits{S.}}
(\byear{1995}).
\btitle{Uniform {CLT} for {M}arkov chains and its invariance principle: a
  martingale approach}.
\bjournal{J. Theoret. Probab.}
\bvolume{8}
\bpages{549--570}.
\bmrnumber{1340827}
\end{barticle}
\endbibitem

\bibitem{MR0464344}
\begin{barticle}[author]
\bauthor{\bsnm{Berkes},~\bfnm{Istv\'an}\binits{I.}} \AND
  \bauthor{\bsnm{Philipp},~\bfnm{Walter}\binits{W.}}
(\byear{1977/78}).
\btitle{An almost sure invariance principle for the empirical distribution
  function of mixing random variables}.
\bjournal{Z. Wahrscheinlichkeitstheorie und Verw. Gebiete}
\bvolume{41}
\bpages{115--137}.
\bmrnumber{0464344}
\end{barticle}
\endbibitem

\bibitem{MR3619696}
\begin{barticle}[author]
\bauthor{\bsnm{Bertail},~\bfnm{Patrice}\binits{P.}},
  \bauthor{\bsnm{Chautru},~\bfnm{Emilie}\binits{E.}} \AND
  \bauthor{\bsnm{Cl\'emen\c{c}on},~\bfnm{Stephan}\binits{S.}}
(\byear{2017}).
\btitle{Empirical processes in survey sampling with (conditional) {P}oisson
  designs}.
\bjournal{Scand. J. Stat.}
\bvolume{44}
\bpages{97--111}.
\bmrnumber{3619696}
\end{barticle}
\endbibitem

\bibitem{MR3670194}
\begin{barticle}[author]
\bauthor{\bsnm{Boistard},~\bfnm{H\'el\`ene}\binits{H.}},
  \bauthor{\bsnm{Lopuha\"a},~\bfnm{Hendrik~P.}\binits{H.~P.}} \AND
  \bauthor{\bsnm{Ruiz-Gazen},~\bfnm{Anne}\binits{A.}}
(\byear{2017}).
\btitle{Functional central limit theorems for single-stage sampling designs}.
\bjournal{Ann. Statist.}
\bvolume{45}
\bpages{1728--1758}.
\bmrnumber{3670194}
\end{barticle}
\endbibitem

\bibitem{BreslowChatterjee1999}
\begin{barticle}[author]
\bauthor{\bsnm{Breslow},~\bfnm{N.~E.}\binits{N.~E.}} \AND
  \bauthor{\bsnm{Chatterjee},~\bfnm{N.}\binits{N.}}
(\byear{1999}).
\btitle{Design and analysis of two-phase studies with binary outcome applied to
  Wilms tumour prognosis}.
\bjournal{Journal of the Royal Statistical Society: Series C (Applied
  Statistics)}
\bvolume{48}
\bpages{457--468}.
\bdoi{10.1111/1467-9876.00165}
\end{barticle}
\endbibitem

\bibitem{MR2325244}
\begin{barticle}[author]
\bauthor{\bsnm{Breslow},~\bfnm{Norman~E.}\binits{N.~E.}} \AND
  \bauthor{\bsnm{Wellner},~\bfnm{Jon~A.}\binits{J.~A.}}
(\byear{2007}).
\btitle{Weighted likelihood for semiparametric models and two-phase stratified
  samples, with application to {C}ox regression}.
\bjournal{Scand. J. Statist.}
\bvolume{34}
\bpages{86--102}.
\bdoi{10.1111/j.1467-9469.2006.00523.x}.
\bmrnumber{2325244}
\end{barticle}
\endbibitem

\bibitem{MR2391566}
\begin{barticle}[author]
\bauthor{\bsnm{Breslow},~\bfnm{Norman~E.}\binits{N.~E.}} \AND
  \bauthor{\bsnm{Wellner},~\bfnm{Jon~A.}\binits{J.~A.}}
(\byear{2008}).
\btitle{A {$Z$}-theorem with estimated nuisance parameters and correction note
  for: ``{W}eighted likelihood for semiparametric models and two-phase
  stratified samples, with application to {C}ox regression'' [{S}cand. {J}.
  {S}tatist. {\bf 34} (2007), no. 1, 86--102; MR2325244]}.
\bjournal{Scand. J. Statist.}
\bvolume{35}
\bpages{186--192}.
\bdoi{10.1111/j.1467-9469.2007.00574.x}.
\bmrnumber{2391566}
\end{barticle}
\endbibitem

\bibitem{Brick2006}
\begin{barticle}[author]
\bauthor{\bsnm{Brick},~\bfnm{J.~Michael}\binits{J.~M.}},
  \bauthor{\bsnm{Dipko},~\bfnm{Sarah}\binits{S.}},
  \bauthor{\bsnm{Presser},~\bfnm{Stanley}\binits{S.}},
  \bauthor{\bsnm{Tucker},~\bfnm{Clyde}\binits{C.}} \AND
  \bauthor{\bsnm{Yuan},~\bfnm{Yangyang}\binits{Y.}}
(\byear{2006}).
\btitle{Nonresponse Bias in a Dual Frame Sample of Cell and Landline Numbers}.
\bjournal{The Public Opinion Quarterly}
\bvolume{70}
\bpages{pp. 780-793}.
\end{barticle}
\endbibitem

\bibitem{Cantelli1933}
\begin{barticle}[author]
\bauthor{\bsnm{Cantelli},~\bfnm{F.~P.}\binits{F.~P.}}
(\byear{1933}).
\btitle{Sulla determinazione empirica delle leggi di probabilita}.
\bjournal{Giorn. Ist. Ital. Attuari}
\bvolume{4}
\bpages{421--424}.
\end{barticle}
\endbibitem

\bibitem{Cervantes2006}
\begin{binproceedings}[author]
\bauthor{\bsnm{Cervantes},~\bfnm{I.~F.}\binits{I.~F.}},
  \bauthor{\bsnm{Jones},~\bfnm{M.~E.}\binits{M.~E.}},
  \bauthor{\bsnm{Rojas},~\bfnm{L.~A.}\binits{L.~A.}},
  \bauthor{\bsnm{Brick},~\bfnm{J.~M.}\binits{J.~M.}},
  \bauthor{\bsnm{Kurata},~\bfnm{J.}\binits{J.}} \AND
  \bauthor{\bsnm{Grant},~\bfnm{D.}\binits{D.}}
(\byear{2006}).
\btitle{A Review of the Sample Design for the California Health Interview
  Survey}.
In \bbooktitle{Proceedings of the Social Statistics Section, American
  Statistical Association}
\bpages{3023--3030}.
\end{binproceedings}
\endbibitem

\bibitem{MR3494641}
\begin{barticle}[author]
\bauthor{\bsnm{Chatterjee},~\bfnm{Nilanjan}\binits{N.}},
  \bauthor{\bsnm{Chen},~\bfnm{Yi-Hau}\binits{Y.-H.}},
  \bauthor{\bsnm{Maas},~\bfnm{Paige}\binits{P.}} \AND
  \bauthor{\bsnm{Carroll},~\bfnm{Raymond~J.}\binits{R.~J.}}
(\byear{2016}).
\btitle{Constrained maximum likelihood estimation for model calibration using
  summary-level information from external big data sources}.
\bjournal{J. Amer. Statist. Assoc.}
\bvolume{111}
\bpages{107--117}.
\bdoi{10.1080/01621459.2015.1123157}.
\bmrnumber{3494641}
\end{barticle}
\endbibitem

\bibitem{MR0341758}
\begin{barticle}[author]
\bauthor{\bsnm{Cox},~\bfnm{D.~R.}\binits{D.~R.}}
(\byear{1972}).
\btitle{Regression models and life-tables}.
\bjournal{J. Roy. Statist. Soc. Ser. B}
\bvolume{34}
\bpages{187--220}.
\bmrnumber{0341758}
\end{barticle}
\endbibitem

\bibitem{pmid2544249}
\begin{barticle}[author]
\bauthor{\bsnm{D'Angio},~\bfnm{G.~J.}\binits{G.~J.}},
  \bauthor{\bsnm{Breslow},~\bfnm{Norman}\binits{N.}},
  \bauthor{\bsnm{Beckwith},~\bfnm{J.~B.}\binits{J.~B.}},
  \bauthor{\bsnm{Evans},~\bfnm{A.}\binits{A.}},
  \bauthor{\bsnm{Baum},~\bfnm{H.}\binits{H.}},
  \bauthor{\bsnm{deLorimier},~\bfnm{A.}\binits{A.}},
  \bauthor{\bsnm{Fernbach},~\bfnm{D.}\binits{D.}},
  \bauthor{\bsnm{Hrabovsky},~\bfnm{E.}\binits{E.}},
  \bauthor{\bsnm{Jones},~\bfnm{B.}\binits{B.}} \AND
  \bauthor{\bsnm{Kelalis},~\bfnm{P.}\binits{P.}}
(\byear{1989}).
\btitle{{{T}reatment of {W}ilms' tumor. {R}esults of the {T}hird {N}ational
  {W}ilms' {T}umor {S}tudy}}.
\bjournal{Cancer}
\bvolume{64}
\bpages{349--360}.
\end{barticle}
\endbibitem

\bibitem{DeLeeuw2005}
\begin{barticle}[author]
\bauthor{\bparticle{de} \bsnm{Leeuw},~\bfnm{Edith~D.}\binits{E.~D.}}
(\byear{2005}).
\btitle{To Mix or Not to Mix Data Collection Modes in Surveys}.
\bjournal{J Off Stat}
\bvolume{21}
\bpages{233--255}.
\end{barticle}
\endbibitem

\bibitem{MR1173804}
\begin{barticle}[author]
\bauthor{\bsnm{Deville},~\bfnm{Jean-Claude}\binits{J.-C.}} \AND
  \bauthor{\bsnm{S{\"a}rndal},~\bfnm{Carl-Erik}\binits{C.-E.}}
(\byear{1992}).
\btitle{Calibration estimators in survey sampling}.
\bjournal{J. Amer. Statist. Assoc.}
\bvolume{87}
\bpages{376--382}.
\bmrnumber{1173804}
\end{barticle}
\endbibitem

\bibitem{Dillman:2014}
\begin{bbook}[author]
\bauthor{\bsnm{Dillman},~\bfnm{Don~A.}\binits{D.~A.}},
  \bauthor{\bsnm{Smyth},~\bfnm{Jolene~D.}\binits{J.~D.}} \AND
  \bauthor{\bsnm{Christian},~\bfnm{Leah~Melani}\binits{L.~M.}}
(\byear{2014}).
\btitle{Internet, Phone, Mail, and Mixed-Mode Surveys: The Tailored Design
  Method}, \bedition{4th} ed.
\bpublisher{Wiley Publishing}.
\end{bbook}
\endbibitem

\bibitem{MR3012400}
\begin{barticle}[author]
\bauthor{\bsnm{Ding},~\bfnm{Ying}\binits{Y.}} \AND
  \bauthor{\bsnm{Nan},~\bfnm{Bin}\binits{B.}}
(\byear{2011}).
\btitle{A sieve {$M$}-theorem for bundled parameters in semiparametric models,
  with application to the efficient estimation in a linear model for censored
  data}.
\bjournal{Ann. Statist.}
\bvolume{39}
\bpages{3032--3061}.
\bdoi{10.1214/11-AOS934}.
\bmrnumber{3012400}
\end{barticle}
\endbibitem

\bibitem{MR0047288}
\begin{barticle}[author]
\bauthor{\bsnm{Donsker},~\bfnm{Monroe~D.}\binits{M.~D.}}
(\byear{1952}).
\btitle{Justification and extension of {D}oob's heuristic approach to the
  {K}omogorov-{S}mirnov theorems}.
\bjournal{Ann. Math. Statistics}
\bvolume{23}
\bpages{277--281}.
\bmrnumber{0047288}
\end{barticle}
\endbibitem

\bibitem{MR665285}
\begin{bincollection}[author]
\bauthor{\bsnm{Dudley},~\bfnm{R.~M.}\binits{R.~M.}}
(\byear{1981}).
\btitle{Donsker classes of functions}.
In \bbooktitle{Statistics and related topics ({O}ttawa, {O}nt., 1980)}
\bpages{341--352}.
\bpublisher{North-Holland, Amsterdam-New York}.
\bmrnumber{665285}
\end{bincollection}
\endbibitem

\bibitem{doi:10.1080/01621459.1969.10501049}
\begin{barticle}[author]
\bauthor{\bsnm{Fellegi},~\bfnm{Ivan~P.}\binits{I.~P.}} \AND
  \bauthor{\bsnm{Sunter},~\bfnm{Alan~B.}\binits{A.~B.}}
(\byear{1969}).
\btitle{A Theory for Record Linkage}.
\bjournal{Journal of the American Statistical Association}
\bvolume{64}
\bpages{1183-1210}.
\bdoi{10.1080/01621459.1969.10501049}
\end{barticle}
\endbibitem

\bibitem{MR757767}
\begin{barticle}[author]
\bauthor{\bsnm{Gin\'e},~\bfnm{Evarist}\binits{E.}} \AND
  \bauthor{\bsnm{Zinn},~\bfnm{Joel}\binits{J.}}
(\byear{1984}).
\btitle{Some limit theorems for empirical processes}.
\bjournal{Ann. Probab.}
\bvolume{12}
\bpages{929--998}.
\bnote{With discussion}.
\bmrnumber{757767}
\end{barticle}
\endbibitem

\bibitem{Glivenko1933}
\begin{barticle}[author]
\bauthor{\bsnm{Glivenko},~\bfnm{V.}\binits{V.}}
(\byear{1933}).
\btitle{Sulla determinazione empirica della legge di probabilita}.
\bjournal{Giorn. Ist. Ital. Attuari}
\bvolume{4}
\bpages{92--99}.
\end{barticle}
\endbibitem

\bibitem{hajek1960}
\begin{barticle}[author]
\bauthor{\bsnm{H{\'a}jek},~\bfnm{J.}\binits{J.}}
(\byear{1960}).
\btitle{Limiting distributions in simple random sampling from a finite
  population}.
\bjournal{Pub. Math. Inst. Hungar. Acad. Sci.}
\bvolume{5}
\bpages{361--374}.
\end{barticle}
\endbibitem

\bibitem{Hartley1962}
\begin{binproceedings}[author]
\bauthor{\bsnm{Hartley},~\bfnm{H.~O.}\binits{H.~O.}}
(\byear{1962}).
\btitle{Multiple frame surveys}.
In \bbooktitle{Proceedings of the Social Statistics Section, American
  Statistical Association}
\bpages{203--206}.
\end{binproceedings}
\endbibitem

\bibitem{MR0228137}
\begin{barticle}[author]
\bauthor{\bsnm{Hartley},~\bfnm{H.~O.}\binits{H.~O.}}
(\byear{1974}).
\btitle{Multiple frame methodology and selected applications}.
\bjournal{Sankhy\=a Ser. C}
\bvolume{36}
\bpages{99--118}.
\end{barticle}
\endbibitem

\bibitem{MR0386084}
\begin{barticle}[author]
\bauthor{\bsnm{Hartley},~\bfnm{H.~O.}\binits{H.~O.}} \AND
  \bauthor{\bsnm{Sielken},~\bfnm{R.~L.}\binits{R.~L.} \bsuffix{Jr.}}
(\byear{1975}).
\btitle{A ``super-population viewpoint'' for finite population sampling}.
\bjournal{Biometrics}
\bvolume{31}
\bpages{411--422}.
\bmrnumber{0386084 (52 \#\#6943)}
\end{barticle}
\endbibitem

\bibitem{Herzog:2007:DQR:1534235}
\begin{bbook}[author]
\bauthor{\bsnm{Herzog},~\bfnm{Thomas~N.}\binits{T.~N.}},
  \bauthor{\bsnm{Scheuren},~\bfnm{Fritz~J.}\binits{F.~J.}} \AND
  \bauthor{\bsnm{Winkler},~\bfnm{William~E.}\binits{W.~E.}}
(\byear{2007}).
\btitle{Data Quality and Record Linkage Techniques}, \bedition{1st} ed.
\bpublisher{Springer Publishing Company, Incorporated}.
\end{bbook}
\endbibitem

\bibitem{MR0053460}
\begin{barticle}[author]
\bauthor{\bsnm{Horvitz},~\bfnm{D.~G.}\binits{D.~G.}} \AND
  \bauthor{\bsnm{Thompson},~\bfnm{D.~J.}\binits{D.~J.}}
(\byear{1952}).
\btitle{A generalization of sampling without replacement from a finite
  universe}.
\bjournal{J. Amer. Statist. Assoc.}
\bvolume{47}
\bpages{663--685}.
\bmrnumber{0053460}
\end{barticle}
\endbibitem

\bibitem{Hu15032011}
\begin{barticle}[author]
\bauthor{\bsnm{Hu},~\bfnm{S.~Sean}\binits{S.~S.}},
  \bauthor{\bsnm{Balluz},~\bfnm{Lina}\binits{L.}},
  \bauthor{\bsnm{Battaglia},~\bfnm{Michael~P.}\binits{M.~P.}} \AND
  \bauthor{\bsnm{Frankel},~\bfnm{Martin~R.}\binits{M.~R.}}
(\byear{2011}).
\btitle{Improving Public Health Surveillance Using a Dual-Frame Survey of
  Landline and Cell Phone Numbers}.
\bjournal{American Journal of Epidemiology}
\bvolume{173}
\bpages{703-711}.
\bdoi{10.1093/aje/kwq442}
\end{barticle}
\endbibitem

\bibitem{MR1394975}
\begin{barticle}[author]
\bauthor{\bsnm{Huang},~\bfnm{Jian}\binits{J.}}
(\byear{1996}).
\btitle{Efficient estimation for the proportional hazards model with interval
  censoring}.
\bjournal{Ann. Statist.}
\bvolume{24}
\bpages{540--568}.
\bdoi{10.1214/aos/1032894452}.
\bmrnumber{1394975}
\end{barticle}
\endbibitem

\bibitem{HuangWellner1997}
\begin{bincollection}[author]
\bauthor{\bsnm{{Huang}},~\bfnm{Jian}\binits{J.}} \AND
  \bauthor{\bsnm{{Wellner}},~\bfnm{Jon~A.}\binits{J.~A.}}
(\byear{1997}).
\btitle{{Interval censored survival data: A review of recent progress.}}
In \bbooktitle{{Proceedings of the first Seattle symposium in biostatistics:
  survival analysis, Seattle, WA, USA, November 20--21, 1995}}
\bpages{123--169}.
\bpublisher{Berlin: Springer}.
\end{bincollection}
\endbibitem

\bibitem{Iachan1993}
\begin{barticle}[author]
\bauthor{\bsnm{Iachan},~\bfnm{R.}\binits{R.}} \AND
  \bauthor{\bsnm{Dennis},~\bfnm{M.~L.}\binits{M.~L.}}
(\byear{1993}).
\btitle{A Multiple Frame Approach to Sampling the Homeless and Transient
  Population}.
\bjournal{J Off Stat}
\bvolume{9}
\bpages{747--764}.
\end{barticle}
\endbibitem

\bibitem{MR648029}
\begin{barticle}[author]
\bauthor{\bsnm{Isaki},~\bfnm{Cary~T.}\binits{C.~T.}} \AND
  \bauthor{\bsnm{Fuller},~\bfnm{Wayne~A.}\binits{W.~A.}}
(\byear{1982}).
\btitle{Survey design under the regression superpopulation model}.
\bjournal{J. Amer. Statist. Assoc.}
\bvolume{77}
\bpages{89--96}.
\end{barticle}
\endbibitem

\bibitem{Kalton1986}
\begin{barticle}[author]
\bauthor{\bsnm{Kalton},~\bfnm{Graham}\binits{G.}} \AND
  \bauthor{\bsnm{Anderson},~\bfnm{Dallas~W.}\binits{D.~W.}}
(\byear{1986}).
\btitle{Sampling Rare Populations}.
\bjournal{Journal of the Royal Statistical Society. Series A (General)}
\bvolume{149}
\bpages{pp. 65-82}.
\end{barticle}
\endbibitem

\bibitem{RSSA:RSSA12136}
\begin{barticle}[author]
\bauthor{\bsnm{Keiding},~\bfnm{Niels}\binits{N.}} \AND
  \bauthor{\bsnm{Louis},~\bfnm{Thomas~A.}\binits{T.~A.}}
(\byear{2016}).
\btitle{Perils and potentials of self-selected entry to epidemiological studies
  and surveys}.
\bjournal{Journal of the Royal Statistical Society: Series A (Statistics in
  Society)}
\bvolume{179}
\bpages{319--376}.
\bdoi{10.1111/rssa.12136}
\end{barticle}
\endbibitem

\bibitem{MR2915160}
\begin{barticle}[author]
\bauthor{\bsnm{Kim},~\bfnm{Gunky}\binits{G.}} \AND
  \bauthor{\bsnm{Chambers},~\bfnm{Raymond}\binits{R.}}
(\byear{2012}).
\btitle{Regression analysis under incomplete linkage}.
\bjournal{Comput. Statist. Data Anal.}
\bvolume{56}
\bpages{2756--2770}.
\bmrnumber{2915160}
\end{barticle}
\endbibitem

\bibitem{MR2724368}
\begin{bbook}[author]
\bauthor{\bsnm{Kosorok},~\bfnm{Michael~R.}\binits{M.~R.}}
(\byear{2008}).
\btitle{Introduction to Empirical Processes and Semiparametric Inference}.
\bseries{Springer Series in Statistics}.
\bpublisher{Springer, New York}.
\bdoi{10.1007/978-0-387-74978-5}.
\bmrnumber{2724368 (2012b:62005)}
\end{bbook}
\endbibitem

\bibitem{MR2156832}
\begin{barticle}[author]
\bauthor{\bsnm{Lahiri},~\bfnm{P.}\binits{P.}} \AND
  \bauthor{\bsnm{Larsen},~\bfnm{Michael~D.}\binits{M.~D.}}
(\byear{2005}).
\btitle{Regression analysis with linked data}.
\bjournal{J. Amer. Statist. Assoc.}
\bvolume{100}
\bpages{222--230}.
\bmrnumber{2156832}
\end{barticle}
\endbibitem

\bibitem{MR996990}
\begin{barticle}[author]
\bauthor{\bsnm{Levental},~\bfnm{Shlomo}\binits{S.}}
(\byear{1989}).
\btitle{A uniform {CLT} for uniformly bounded families of martingale
  differences}.
\bjournal{J. Theoret. Probab.}
\bvolume{2}
\bpages{271--287}.
\bmrnumber{996990}
\end{barticle}
\endbibitem

\bibitem{MR2324141}
\begin{barticle}[author]
\bauthor{\bsnm{Lohr},~\bfnm{Sharon}\binits{S.}} \AND
  \bauthor{\bsnm{Rao},~\bfnm{J.~N.~K.}\binits{J.~N.~K.}}
(\byear{2006}).
\btitle{Estimation in multiple-frame surveys}.
\bjournal{J. Amer. Statist. Assoc.}
\bvolume{101}
\bpages{1019--1030}.
\bdoi{10.1198/016214506000000195}.
\bmrnumber{2324141}
\end{barticle}
\endbibitem

\bibitem{Lu2012}
\begin{binproceedings}[author]
\bauthor{\bsnm{Lu},~\bfnm{Y.}\binits{Y.}}
(\byear{2012}).
\btitle{Regression Coefficient Estimation in Dual Frame Surveys}.
In \bbooktitle{Proceedings of the Section on Survey Research Methods, American
  Statistical Association}
\bpages{4687--4695}.
\end{binproceedings}
\endbibitem

\bibitem{lu2010}
\begin{barticle}[author]
\bauthor{\bsnm{Lu},~\bfnm{Yan}\binits{Y.}} \AND
  \bauthor{\bsnm{Lohr},~\bfnm{SL}\binits{S.}}
(\byear{2010}).
\btitle{Gross flow estimation in dual frame surveys}.
\bjournal{Survey Methodology}
\bvolume{36}
\bpages{13--22}.
\end{barticle}
\endbibitem

\bibitem{MR2202406}
\begin{barticle}[author]
\bauthor{\bsnm{Ma},~\bfnm{Shuangge}\binits{S.}} \AND
  \bauthor{\bsnm{Kosorok},~\bfnm{Michael~R.}\binits{M.~R.}}
(\byear{2005}).
\btitle{Robust semiparametric {M}-estimation and the weighted bootstrap}.
\bjournal{J. Multivariate Anal.}
\bvolume{96}
\bpages{190--217}.
\bdoi{10.1016/j.jmva.2004.09.008}.
\bmrnumber{2202406}
\end{barticle}
\endbibitem

\bibitem{Metcalf2009}
\begin{barticle}[author]
\bauthor{\bsnm{Metcalf},~\bfnm{Patricia}\binits{P.}} \AND
  \bauthor{\bsnm{Scott},~\bfnm{Alastair}\binits{A.}}
(\byear{2009}).
\btitle{Using multiple frames in health surveys}.
\bjournal{Statistics in Medicine}
\bvolume{28}
\bpages{1512--1523}.
\bdoi{10.1002/sim.3566}
\end{barticle}
\endbibitem

\bibitem{MR1245301}
\begin{barticle}[author]
\bauthor{\bsnm{Pr{\ae}stgaard},~\bfnm{Jens}\binits{J.}} \AND
  \bauthor{\bsnm{Wellner},~\bfnm{Jon~A.}\binits{J.~A.}}
(\byear{1993}).
\btitle{Exchangeably weighted bootstraps of the general empirical process}.
\bjournal{Ann. Probab.}
\bvolume{21}
\bpages{2053--2086}.
\end{barticle}
\endbibitem

\bibitem{Ranalli2015}
\begin{barticle}[author]
\bauthor{\bsnm{Ranalli},~\bfnm{M.~Giovanna}\binits{M.~G.}},
  \bauthor{\bsnm{Arcos},~\bfnm{Antonio}\binits{A.}},
  \bauthor{\bsnm{Rueda},~\bfnm{MaríadelMar}\binits{M.}} \AND
  \bauthor{\bsnm{Teodoro},~\bfnm{Annalisa}\binits{A.}}
(\byear{2016}).
\btitle{Calibration estimation in dual-frame surveys}.
\bjournal{Statistical Methods \& Applications}
\bvolume{25}
\bpages{321-349}.
\bdoi{10.1007/s10260-015-0336-5}
\end{barticle}
\endbibitem

\bibitem{Rao1994}
\begin{barticle}[author]
\bauthor{\bsnm{Rao},~\bfnm{J.~N.~K.}\binits{J.~N.~K.}}
(\byear{1994}).
\btitle{Estimating Totals and Distribution Functions Using Auxiliary
  Information at the Estimation Stage}.
\bjournal{J Off Stat}
\bvolume{10}
\bpages{153--165}.
\end{barticle}
\endbibitem

\bibitem{MR2796566}
\begin{barticle}[author]
\bauthor{\bsnm{Rao},~\bfnm{J.~N.~K.}\binits{J.~N.~K.}} \AND
  \bauthor{\bsnm{Wu},~\bfnm{Changbao}\binits{C.}}
(\byear{2010}).
\btitle{Pseudo-empirical likelihood inference for multiple frame surveys}.
\bjournal{J. Amer. Statist. Assoc.}
\bvolume{105}
\bpages{1494--1503}.
\bdoi{10.1198/jasa.2010.tm09534}.
\bmrnumber{2796566 (2012b:62035)}
\end{barticle}
\endbibitem

\bibitem{MR2253102}
\begin{barticle}[author]
\bauthor{\bsnm{Rubin-Bleuer},~\bfnm{Susana}\binits{S.}} \AND
  \bauthor{\bsnm{Schiopu~Kratina},~\bfnm{Ioana}\binits{I.}}
(\byear{2005}).
\btitle{On the two-phase framework for joint model and design-based inference}.
\bjournal{Ann. Statist.}
\bvolume{33}
\bpages{2789--2810}.
\bdoi{10.1214/009053605000000651}.
\bmrnumber{2253102}
\end{barticle}
\endbibitem

\bibitem{SW2013supp}
\begin{barticle}[author]
\bauthor{\bsnm{Saegusa},~\bfnm{Takumi}\binits{T.}} \AND
  \bauthor{\bsnm{Wellner},~\bfnm{Jon~A.}\binits{J.~A.}}
(\byear{2013}).
\btitle{Supplementary material to "{W}eighted likelihood estimation under
  two-phase sampling"}.
\bdoi{10.1214/12-AOS1073SUPP}
\end{barticle}
\endbibitem

\bibitem{MR3059418}
\begin{barticle}[author]
\bauthor{\bsnm{Saegusa},~\bfnm{Takumi}\binits{T.}} \AND
  \bauthor{\bsnm{Wellner},~\bfnm{Jon~A.}\binits{J.~A.}}
(\byear{2013}).
\btitle{Weighted likelihood estimation under two-phase sampling}.
\bjournal{Ann. Statist.}
\bvolume{41}
\bpages{269--295}.
\bdoi{10.1214/12-AOS1073}.
\bmrnumber{3059418}
\end{barticle}
\endbibitem

\bibitem{MR1147105}
\begin{barticle}[author]
\bauthor{\bsnm{Skinner},~\bfnm{C.~J.}\binits{C.~J.}}
(\byear{1991}).
\btitle{On the efficiency of raking ratio estimation for multiple frame
  surveys}.
\bjournal{J. Amer. Statist. Assoc.}
\bvolume{86}
\bpages{779--784}.
\bmrnumber{1147105 (92i:62020)}
\end{barticle}
\endbibitem

\bibitem{MR1394091}
\begin{barticle}[author]
\bauthor{\bsnm{Skinner},~\bfnm{C.~J.}\binits{C.~J.}} \AND
  \bauthor{\bsnm{Rao},~\bfnm{J.~N.~K.}\binits{J.~N.~K.}}
(\byear{1996}).
\btitle{Estimation in dual frame surveys with complex designs}.
\bjournal{J. Amer. Statist. Assoc.}
\bvolume{91}
\bpages{349--356}.
\bdoi{10.2307/2291414}.
\bmrnumber{1394091 (97a:62018)}
\end{barticle}
\endbibitem

\bibitem{MR1915446}
\begin{bincollection}[author]
\bauthor{\bparticle{van~der} \bsnm{Vaart},~\bfnm{Aad}\binits{A.}}
(\byear{2002}).
\btitle{Semiparametric statistics}.
In \bbooktitle{Lectures on probability theory and statistics ({S}aint-{F}lour,
  1999)}.
\bseries{Lecture Notes in Math.}
\bvolume{1781}
\bpages{331--457}.
\bpublisher{Springer}, \baddress{Berlin}.
\end{bincollection}
\endbibitem

\bibitem{MR1333176}
\begin{barticle}[author]
\bauthor{\bparticle{van~der} \bsnm{Vaart},~\bfnm{A.~W.}\binits{A.~W.}}
(\byear{1995}).
\btitle{Efficiency of infinite-dimensional {$M$}-estimators}.
\bjournal{Statist. Neerlandica}
\bvolume{49}
\bpages{9--30}.
\bdoi{10.1111/j.1467-9574.1995.tb01452.x}.
\bmrnumber{1333176}
\end{barticle}
\endbibitem

\bibitem{MR1652247}
\begin{bbook}[author]
\bauthor{\bparticle{van~der} \bsnm{Vaart},~\bfnm{A.~W.}\binits{A.~W.}}
(\byear{1998}).
\btitle{Asymptotic statistics}.
\bseries{Cambridge Series in Statistical and Probabilistic Mathematics}
\bvolume{3}.
\bpublisher{Cambridge University Press}, \baddress{Cambridge}.
\end{bbook}
\endbibitem

\bibitem{MR1385671}
\begin{bbook}[author]
\bauthor{\bparticle{van~der} \bsnm{Vaart},~\bfnm{Aad~W.}\binits{A.~W.}} \AND
  \bauthor{\bsnm{Wellner},~\bfnm{Jon~A.}\binits{J.~A.}}
(\byear{1996}).
\btitle{Weak {C}onvergence and {E}mpirical {P}rocesses}.
\bseries{Springer Series in Statistics}.
\bpublisher{Springer-Verlag}, \baddress{New York}.
\end{bbook}
\endbibitem

\bibitem{MR1857319}
\begin{bincollection}[author]
\bauthor{\bparticle{van~der} \bsnm{Vaart},~\bfnm{Aad~W.}\binits{A.~W.}} \AND
  \bauthor{\bsnm{Wellner},~\bfnm{Jon~A.}\binits{J.~A.}}
(\byear{2000}).
\btitle{Preservation theorems for {G}livenko-{C}antelli and uniform
  {G}livenko-{C}antelli classes}.
In \bbooktitle{High dimensional probability, II (Seattle, WA, 1999)}.
\bseries{Progr. Probab.}
\bvolume{47}
\bpages{115--133}.
\bpublisher{Birkh\"auser Boston}, \baddress{Boston, MA}.
\end{bincollection}
\endbibitem

\bibitem{Winkler1995}
\begin{binbook}[author]
\bauthor{\bsnm{Winkler},~\bfnm{William~E.}\binits{W.~E.}}
(\byear{1995}).
\btitle{Matching and Record Linkage}
\bpages{353--384}.
\bpublisher{John Wiley \& Sons, Inc.}
\bdoi{10.1002/9781118150504.ch20}
\end{binbook}
\endbibitem

\bibitem{MR1473875}
\begin{barticle}[author]
\bauthor{\bsnm{Ziegler},~\bfnm{Klaus}\binits{K.}}
(\byear{1997}).
\btitle{Functional central limit theorems for triangular arrays of
  function-indexed processes under uniformly integrable entropy conditions}.
\bjournal{J. Multivariate Anal.}
\bvolume{62}
\bpages{233--272}.
\bmrnumber{1473875}
\end{barticle}
\endbibitem

\bibitem{MR1834583}
\begin{barticle}[author]
\bauthor{\bsnm{Ziegler},~\bfnm{Klaus}\binits{K.}}
(\byear{2001}).
\btitle{Uniform laws of large numbers for triangular arrays of function-indexed
  processes under random entropy conditions}.
\bjournal{Results Math.}
\bvolume{39}
\bpages{374--389}.
\bmrnumber{1834583}
\end{barticle}
\endbibitem

\end{thebibliography}

\end{document}